\newtheorem{Theorem}[equation]{Theorem}
\newtheorem{Proposition}[equation]{Proposition}
\newtheorem{Corollary}[equation]{Corollary}
\newtheorem{Lemma}[equation]{Lemma}
\newtheorem{Definition}[equation]{Definition}
\newtheorem{Remark}[equation]{Remark}
\newtheorem{Question}[equation]{Question}
\newtheorem{Definition and Proposition}[equation]{Definition and Proposition}
\newtheorem{Definition and Theorem}[equation]{Definition and Theorem}
\newtheorem{Definition and Remark}[equation]{Definition and Remark}
\newtheorem{Definition and Lemma}[equation]{Definition and Lemma}
\newtheorem{Definition and Example}[equation]{Definition and Example}
\newtheorem{Example}[equation]{Example}
\newcommand{\draft}[1]{} 
\newcommand{\vsp}{\vspace{2mm}}
\def\epsilon{\varepsilon}
\def\phi{\varphi}
\newcommand{\al}{\alpha}
\newcommand{\be}{\beta}
\newcommand{\ga}{\gamma}
\newcommand{\de}{\delta}
\newcommand{\ep}{\epsilon}
\newcommand{\ze}{\zeta}
\newcommand{\lam}{\lambda}
\newcommand{\si}{\sigma}
\newcommand{\om}{\omega}
\newcommand{\Ga}{\Gamma}
\newcommand{\Lam}{\Lambda}
\newcommand{\gadot}{{\dot{\ga}}}
\newcommand{\gati}{{\ti{\ga}}}
\newcommand{\omti}{{\ti{\om}}}
\newcommand{\phihat}{{\hat{\phi}}}
\newcommand{\bti}{{\ti{b}}}
\newcommand{\pti}{{\ti{p}}}
\newcommand{\qti}{{\ti{q}}}
\newcommand{\rti}{{\ti{r}}}
\newcommand{\sti}{{\ti{s}}}
\newcommand{\tti}{{\ti{t}}}
\newcommand{\vti}{{\ti{v}}}
\newcommand{\xti}{{\ti{x}}}
\newcommand{\yti}{{\ti{y}}}
\newcommand{\zti}{{\ti{z}}}
\newcommand{\Bti}{{\ti{B}}}
\newcommand{\Cti}{{\ti{C}}}
\newcommand{\Hti}{{\ti{H}}}
\newcommand{\Mti}{{\ti{M}}}
\newcommand{\Wti}{{\ti{W}}}
\newcommand{\zdot}{{\dot{z}}}
\newcommand{\vhat}{{\hat{v}}}
\newcommand{\xhat}{{\hat{x}}}
\newcommand{\Hhat}{{\hat{H}}}
\def\C{{\mathbb C}}
\def\K{{\mathbb K}}
\def\N{{\mathbb N}}
\def\R{{\mathbb R}}
\def\Q{{\mathbb Q}}
\def\Z{{\mathbb Z}}
\newcommand{\mcA}{\mathcal A}
\newcommand{\mcD}{\mathcal D}
\newcommand{\mcF}{\mathcal F}
\newcommand{\mcH}{\mathcal H}
\newcommand{\mcM}{\mathcal M}
\newcommand{\mcN}{\mathcal N}
\newcommand{\mcP}{\mathcal P}
\newcommand{\mcZ}{\mathcal Z}
\newcommand{\mcHti}{\ti{\mcH}}
\newcommand{\mcMhat}{\widehat{\mathcal M}}
\newcommand{\mcNhat}{\widehat{\mathcal N}}
\newcommand{\msC}{\mathscr C}
\newcommand{\msD}{\mathscr D}
\newcommand{\msH}{\mathscr H}
\newcommand{\IFF}{\Leftrightarrow}
\newcommand{\ti}{\tilde}
\newcommand{\x}{\times}
\newcommand{\del}{\partial}
\newcommand{\lapla}{\triangle}
\newcommand{\inter}{\pitchfork}
\newcommand{\lkl}{{<}}
\newcommand{\rkl}{{>}}
\newcommand{\dd}{\mbox{$\mathfrak d$}}
\newcommand{\beq}{\begin{equation}}
\newcommand{\eeq}{\end{equation}}
\newcommand{\beqs}{\begin{equation*}}
\newcommand{\eeqs}{\end{equation*}}
\newcommand{\betrag}[1]{\lvert #1 \rvert}
\newcommand{\mcHpr}{{\mathcal H}_{pr}}
\newcommand{\mcHprti}{{\ti{\mathcal H}_{pr}}}
\DeclareMathOperator{\Diff}{Diff}
\DeclareMathOperator{\Ext}{Ext}
\DeclareMathOperator{\Fix}{Fix}
\DeclareMathOperator{\gap}{gap}
\DeclareMathOperator{\Ham}{Ham}
\DeclareMathOperator{\Id}{Id}
\DeclareMathOperator{\Img}{Im}
\DeclareMathOperator{\Ind}{Ind}
\DeclareMathOperator{\Int}{Int}
\DeclareMathOperator{\Per}{Per}
\DeclareMathOperator{\rk}{rk}
\DeclareMathOperator{\sign}{sign}
\DeclareMathOperator{\Span}{Span}
\DeclareMathOperator{\Spec}{Spec}
\DeclareMathOperator{\Symp}{Symp}
\DeclareMathOperator{\Vol}{vol}
\newcommand{\delbar}{\bar{\del}}
\newcommand{\delti}{\ti{\del}}
\newcommand{\cpt}{compact}
\newcommand{\cnst}{constant}
\newcommand{\diffeo}{diffeomorphism}
\newcommand{\fct}{function}
\newcommand{\ham}{Hamiltonian}
\newcommand{\hyp}{hyperbolic}
\newcommand{\lagr} {Lagrangian}
\newcommand{\mf}{manifold}
\newcommand{\refintroinvar}{Theorem \ref{introinvar}}
\newcommand{\refrank}{(\ref{rank})}
\newcommand{\refmuZcomp}{Remark \ref{mu Z-comp}}
\newcommand{\refinjaroundvertex}{Remark \ref{inj around vertex}}
\newcommand{\refgluing}{Theorem \ref{gluing}}
\newcommand{\refexistenceqq}{Theorem \ref{existence q, q'}}
\newcommand{\refpositionprim}{Remark \ref{position prim}}
\newcommand{\refindexprim}{Lemma \ref{index prim}}
\newcommand{\refprimarymax}{Remark \ref{primary max}}
\newcommand{\reffiniteprimary}{Remark \ref{finite primary}}
\newcommand{\refsignskewsym}{Lemma \ref{sign skew sym}}
\newcommand{\refLsigns}{(\ref{Lsigns})}
\newcommand{\refdelfraksquare}{Theorem \ref{delfraksquare}}
\newcommand{\refhomcohom}{Theorem \ref{hom cohom}}
\newcommand{\refpropclassifind}{Proposition \ref{prop classif ind 1}}
\newcommand{\refalwaysintersections}{Lemma \ref{always intersections}}
\newcommand{\reffiniteprimiterate}{Proposition \ref{finite prim iterate}}
\newcommand{\refpropclassifprimary}{Proposition \ref{prop classif primary}}
\newcommand{\refprimcutth}{Theorem \ref{prim cut th}}
\newcommand{\refkes}{(\ref{kes})}
\newcommand{\refchapterexamples}{Chapter \ref{chapterexamples}}
\newcommand{\refinvthcpt}{Theorem \ref{inv th cpt}}
\newcommand{\refstartend}{Proposition \ref{start end}}
\newcommand{\refflipviamixed}{Lemma \ref{flip via mixed}}
\newcommand{\refmovechar}{Corollary \ref{move char}}
\newcommand{\refhomoclinicloops}{Corollary \ref{homoclinicloops}}
\newcommand{\refsecunimport}{Proposition \ref{sec unimport}}
\newcommand{\reffishformula}{Proposition \ref{fish formula}}
\newcommand{\refcompared}{Lemma \ref{compare d}}
\newcommand{\refdkron}{(\ref{d kron})}
\newcommand{\refdeffg}{Lemma \ref{def f, g}}
\newcommand{\refmdel}{(\ref{m'(del)=0})}
\newcommand{\refrsprimaryisom}{Theorem \ref{rs primary isom}}
\newcommand{\refprimmoveinvar}{Theorem \ref{prim move invar}}
\newcommand{\refcomment}{Remark \ref{comment}}
\newcommand{\refsimplemixedinvar}{Proposition \ref{simple mixed invar}}
\newcommand{\refmixedinvar}{Theorem \ref{mixed invar}}
\newcommand{\refproofstartend}{Paragraph \ref{proof start end}}
\newcommand{\refprimstab}{Lemma \ref{prim stab} (\ref{second})}
\newcommand{\refsignequal}{Lemma \ref{sign equal}}
\newcommand{\refdimleq}{Proposition \ref{dimleq}}
\newcommand{\refinfinitehom}{Proposition \ref{infinitehom}}
\newcommand{\refsemiprimhom}{Theorem \ref{semiprimhom}}
\begin{document}

\title{Homoclinic points and Floer homology}

\author{Sonja Hohloch}

\address{Sonja Hohloch \\ Ecole Polytechnique F\'ed\'erale de Lausanne \\ SB MATHGEOM CAG \\ 1015 Lausanne, Switzerland}
\email{sonja.hohloch@epfl.ch}
\urladdr{http://sma.epfl.ch/$\sim$hohloch/}

\begin{abstract}
\noindent
A new relation between homoclinic points and Lagrangian Floer homology is
presented:
In dimension two, we construct a Floer homology generated by primary homoclinic
points. We compute two examples and prove an invariance theorem. Moreover, we
establish a link to the (absolute) flux and growth of symplectomorphisms.
\end{abstract}

\maketitle

\section{Introduction}

Homoclinic points are the intersection points of the stable and unstable
manifolds of a hyperbolic fixed point. Their discovery goes back to 1889 when
Poincar\'e \cite{poincare1}, \cite{poincare2} studied the n-body problem and
came across certain nonconvergent trigonometric series. First results about the
nature of homoclinic points are due to Birkhoff \cite{birkhoff} who discovered
an intricate amount of (mostly high) periodic points near homoclinic ones. This
phenomenon was formalized by Smale's \cite{smale1} \cite{smale2} horseshoe in
the 1960's. Melnikov's \cite{melnikov} perturbation method for producing and
detecting homoclinic points also dates to the 1960's. Since the 1970's,
genericity properties of homoclinic points were studied by several authors. But
in spite of these achievements, there are still many open questions.

\vsp

Floer theory is a much more recent development. Floer \cite{floer1},
\cite{floer2}, \cite{floer3} devised it in the late 1980's when he worked on the
Arnold conjecture. Arnold conjectured around 1960 that the number of fixed
points of a nondegenerate Hamiltonian diffeomorphism on a closed symplectic
manifold is greater or equal to the sum over the Betti numbers. 
Floer proved Arnold's conjecture first on closed symplectic manifolds with
$\pi_2(M)=0$ and then on so-called monotone manifolds. After his breakthrough,
the conjecture was established on more general closed symplectic manifolds by a
series of authors, cf.\ for references e.g.\ Salamon \cite{salamon}.

Floer theory is some kind of infinite dimensional Morse theory for the
symplectic action functional as a Morse function. 
It is vividly studied nowadays and has many applications not only in symplectic
geometry and dynamical systems. The first version of Floer homology was
Lagrangian Floer homology. Roughly speaking, its chain groups are generated by
the intersection points of two Lagrangian submanifolds. The grading is induced
by the Maslov index. The boundary operator counts flow lines of the negative
$L^2$-gradiant flow of the symplectic action functional between intersection
points with Maslov index difference one.

\vsp

The present paper is motivated by the fact that the stable and unstable manifold
of a hyperbolic fixed point of a symplectomorphism are Lagrangian submanifolds.
In such a case the homoclinic points can be considered as intersection points of
a Lagrangian intersection problem for which one might hope to define a
Lagrangian Floer homology. The main obstacle is the wild oscillation and
accumulation behaviour of the noncompact Lagrangians. Classical Lagrangian Floer
homology is defined for compact Lagrangian submanifolds and can be generalized
to `nice' noncompact ones. But those techniques fail in the present situation. 

In order to actually {\em count} connecting flow lines one needs compactness of
the associated 0-dimensional moduli spaces. If the dimension of the manifold is
greater than two this turns out to be a tricky analysis problem about Gromov
compactness of spaces of pseudo-holomorphic curves.

But on two-dimensional manifolds, the analysis can be replaced by combinatorics
and counting of certain orientation preserving immersions as shown by de Silva
\cite{de silva}, Fel'shtyn \cite{felshtyn} and Gautschi $\&$ Robin $\&$ Salamon
\cite{gautschi-robbin-salamon}. 
Since a symplectic form is a nondegenerate, closed 2-form the notions of `volume
preserving' and `symplectic' coincide and symplectomorphisms and volume preserving
diffeomorphisms are the same.

The wild behaviour and the noncompactness of the (un)stable manifolds prevent
the analysis ansatz. Therefore we will work in a two-dimensional setting.
Nevertheless, the set of homoclinic points is still too large to allow a
well-defined and meaningful Floer homology as analysed in Hohloch
\cite{hohloch}. Our generator set will be the set of so-called primary
homoclinic points which are defined by very rigid geometric properties.

\vsp

In the following, $(M, \om)$ stands for $\R^2$ or a closed surface with genus $g
\geq 1$ with their resp.\ volume forms. Let $\phi$ be a symplectomorphism on $M$
with hyperbolic fixed point $x$. Denote the associated stable resp.\ unstable
manifolds by $W^s:=W^s(x, \phi)$ resp.\ $W^u:=W^u(x, \phi)$ and set $\mcH:=W^s
\cap W^u$ to be the set of homoclinic points. Given $p$, $q \in \mcH$, we call
$[p,q]_s \subset W^s$ and $[p,q]_u \subset W^u$ the stable resp.\ unstable
segment between $p$ and $q$. 
We call $p$ contractible if the loop $[p,x]_s \cup [p,x]_u$ is contractible in
$M$. 
We denote by $\mathcal H_{[x]} \subset \mathcal H$ the set of contractible
homoclinic points and call
\begin{equation*}
\mathcal H_{pr}:=\{p \in \mathcal H_{[x]}\backslash\{x\} \mid\ ]p,x[_s\ \cap \
]p,x[_u\ \cap\ \mathcal H_{[x]} = \emptyset \}
\end{equation*}
the set of {\em primary} points. 
$\phi$ induces a $\Z$-action on $\mcH$ via $\Z  \x \mcH \to \mcH$, $(n,p)
\mapsto \phi^n(p)$. 
$\tilde{\mathcal H}_{pr}:=\mathcal H_{pr} \slash \mathbb Z$ is finite and we
denote the equivalence class or orbit of $p$ by $\langle p \rangle$. 
The Maslov index $\mu$ induces a grading $\mu:\tilde{\mathcal H}_{pr} \to
\mathbb Z$. We define $m(p,q)$ to be the number of certain orientation
preserving immersions with start point $p$ and end point $q$ and set $m(\langle
p \rangle, \langle q \rangle):= \sum_{n \in \Z} m(p,\phi^n(q))$.

\begin{Theorem}
The groups and operator
\begin{align*}
C_k(x, \phi):=\bigoplus_{\stackrel{\langle p \rangle \in \tilde{\mathcal
H}_{pr}}{\mu(\langle p \rangle)=k}} \mathbb Z \langle p \rangle \quad \mbox{and}
\quad
 \partial \langle p \rangle := \sum_{\stackrel{\langle q \rangle \in
\tilde{\mathcal H}_{pr}}{\mu(\langle q \rangle )= \mu(\langle p \rangle)-1}}
m(\langle p \rangle , \langle q \rangle) \langle q \rangle
\end{align*}
with $k \in \Z$ form a chain complex, i.e. $\del \circ \del =0$, and we call the
resulting homology $ H_*:=H_*(x, \phi):= \frac{\ker \partial_*}{\Img
\partial_{*+1}}$ {\em primary Floer homology}. $C_k(x, \phi)$ and thus $H_k(x,
\phi)$ vanish for $k \notin \{\pm 1, \pm 2, \pm 3\}$.
\end{Theorem}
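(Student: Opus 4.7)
The plan is to establish the two assertions separately: first the identity $\partial \circ \partial = 0$, and then the bound on the grading.

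For the chain complex property, I would follow the combinatorial/geometric strategy pioneered by de Silva and Gautschi-Robbin-Salamon, which replaces the usual Gromov-compactness argument by a direct count of immersions. The matrix element of $\partial^2$ between index-$k$ and index-$(k-2)$ generators is, after expansion, the signed sum over intermediate primaries $\langle q \rangle$ of concatenations of two consecutive index-$1$ immersions. I would interpret each such concatenation as an end of a combinatorial $1$-parameter family of index-$2$ immersions from $\langle p \rangle$ to $\langle r \rangle$: as the parameter varies, an internal arc sweeps across the immersed region and eventually pinches off a $q$-corner either "at the top" or "at the bottom" of the configuration. Every such family has exactly two ends, of opposite types, and the sign conventions already fixed by $\refsignskewsym$ and \refLsigns\ force the two contributions to cancel. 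The main technical obstacle will be a careful classification of these $1$-parameter families together with a verification that no other degenerations arise; this is the combinatorial substitute for Gromov compactness, and the primary condition must be used to exclude "bubbling" of extra homoclinic intersections into the interior of a family.

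For the dimension bound, I would use the two-dimensional geometric description of primary points together with an explicit reading of the Maslov index. Because $\dim M = 2$, the Lagrangians $W^s$ and $W^u$ are curves, and at a primary homoclinic point $p$ the grading $\mu(\langle p \rangle)$ can be expressed as a signed count of half-turns made by the tangent line of $W^u$ relative to that of $W^s$ along the contractible loop $[p,x]_s \cup [p,x]_u$, together with fixed local contributions at $x$ and $p$. Near the hyperbolic fixed point $x$ there are exactly four local branches of $W^s \cup W^u$, and the segments $[p,x]_s$ and $[p,x]_u$ can each approach $x$ from only two of them, leaving a short list of local configurations. The primary condition rigidly constrains the global geometry: by definition no other contractible homoclinic point lies in $]p,x[_s \cap\, ]p,x[_u$, so the loop cannot spiral or accumulate further rotation. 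Combining this with the classification already produced in $\refpropclassifprimary$, I would enumerate the finitely many geometric types of primary point and evaluate $\mu$ directly on each, obtaining values in $\{-3,-2,-1,1,2,3\}$, with $0$ excluded because $p \neq x$.

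The hardest step will be the dimension bound: translating the abstract Maslov-index definition into the concrete two-dimensional winding count in a manner that is manifestly invariant under the $\Z$-action $p \mapsto \phi^n(p)$, and verifying that the extremal values $\pm 3$ are actually realized while ruling out $|\mu| \geq 4$ by exploiting the primary condition. The $\partial^2 = 0$ argument, by contrast, is expected to reduce to a coherent-orientation bookkeeping once the $1$-parameter families are correctly enumerated.
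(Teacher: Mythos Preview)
Your overall shape is right, but you have inverted the difficulty and missed the one genuinely delicate step.

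The grading bound is the easy half. In the paper it is \refindexprim, and the argument is two lines: lift to the universal cover, where the primary condition becomes $]\tilde p,\tilde x[_u\cap\,]\tilde p,\tilde x[_s=\emptyset$, so $[\tilde p,\tilde x]_u\cup[\tilde p,\tilde x]_s$ bounds an \emph{embedded} 2-gon; the Maslov index is twice the winding of the tangent direction along the boundary, and an embedded 2-gon can only realize the values $\pm1,\pm2,\pm3$. There is no need to enumerate branch configurations at $x$, and your reference to \refpropclassifprimary\ is misplaced --- that proposition classifies index-2 hearts between \emph{two} primary points, not the absolute index of a single one.

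For $\partial^2=0$ your ``1-parameter family'' picture is the analytic heuristic; the paper's argument is discrete cutting/gluing. Given primary $p,r$ with $\mu(p,r)=2$ and a heart $w\in\mcN(p,r)$, the cutting construction (\refexistenceqq, whose proof uses the $\lambda$-lemma, not Gromov compactness) produces exactly two cutting points $q_u,q_s$. The step you have not isolated, and which is the real content, is \refprimcutth: one must prove that $q_u$ and $q_s$ are \emph{either both primary or both nonprimary}. This is not a formality --- it is read off the explicit classification in Figure~\ref{primary cutting}: in the $(\mu(p,x),\mu(x,r))=(1,1)$ cases one cutting point is $x$ itself and the other is secondary, so neither is primary; in all remaining cases both are primary. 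Without this dichotomy a heart could contribute a single unmatched primary $q$ to $\sum_q m(p,q)m(q,r)$ and $\partial^2$ would not vanish. Once the dichotomy is established, \refsignskewsym\ cancels the two surviving terms. (Your citation of \refLsigns\ is also off: that equation concerns the alternative $n$-signs, whereas the proof with $m$-signs uses only \refsignskewsym.)
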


The well-definedness of $\partial$ and the proof of $\partial \circ \partial =0$
are tricky combinations of dynamical and combinatorial arguments.

\vsp

$H_*$ is invariant under so called {\em contractibly strongly intersecting
(symplectic) isotopies} (defined later before \refinvthcpt):

\begin{Theorem}
\label{introinvar}
Let $(M,\om)$ be a closed symplectic two-dimensional \mf\ with genus $g \geq
1$. 
Let $\phi$ and $\psi$ be symplectomorphisms with \hyp\ fixed points $x \in
\Fix(\phi)$ and $y \in \Fix(\psi)$. Let $(x, \phi)$ and $(y, \psi)$ be csi and
let all primary points of $\phi$ and $\psi$ be transverse.
Assume there is a csi isotopy $\Phi$ from $(x, \phi)$ to $( y, \psi)$.
Then
\beqs
H_*(x,\phi) \simeq H_*(y, \psi).
\eeqs
\end{Theorem}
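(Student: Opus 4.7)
The plan is to reduce the invariance to a finite sequence of local combinatorial moves along the csi isotopy $\Phi=\{\Phi_t\}_{t\in[0,1]}$ from $(x,\phi)$ to $(y,\psi)$, and then verify that the primary Floer homology is preserved under each such move. This mirrors the strategy of continuation maps in classical Floer theory, but replaces the analytical cobordism argument (pseudo-holomorphic strips) by a dynamical and combinatorial bookkeeping of primary homoclinic points and immersions, in the spirit of the two-dimensional framework discussed in the introduction.

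First, I would invoke a genericity reduction: after a small csi-preserving perturbation of $\Phi$, I may assume that for all but finitely many parameters $t_1<\ldots<t_N$ in $(0,1)$, the map $\Phi_t$ has only transverse primary points, and that at each exceptional $t_i$ exactly one elementary transition occurs. Between consecutive exceptional times the combinatorial configuration of $\mcH_{pr}(\Phi_t)$ is structurally stable, so the generators $\langle p\rangle$, their Maslov indices $\mu(\langle p\rangle)$, and the immersion counts $m(\langle p\rangle,\langle q\rangle)$ all stay constant. This identifies the chain complex $C_*(x_t,\Phi_t)$ canonically on each open subinterval, and should follow from \refstartend together with \refmovechar.

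Second, at each exceptional time $t_i$ one of a short list of local moves occurs: the birth or death of a pair of primary points through a tangency between $W^s$ and $W^u$, a flip where a previously primary point loses primariness (or vice versa) because a secondary intersection crosses through an endpoint of a primary segment, and the mixed moves produced by these (as in \refflipviamixed). For each move type I would describe the chain complex on either side explicitly, identifying the generators that persist and the generators that appear or disappear. The core step is then to exhibit a comparison chain map that is the identity on persisting generators and that incorporates the new/vanishing generators via the local immersion count; for birth/death one expects the pair of new generators to differ in Maslov index by one and to form an acyclic subcomplex, so the homology is unchanged. This is essentially the content of \refrsprimaryisom and \refprimmoveinvar, applied one move at a time.

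Finally, concatenating the finitely many move-isomorphisms along $t_1,\ldots,t_N$ yields $H_*(x,\phi)\simeq H_*(y,\psi)$, as desired. The main obstacle I expect is the move-invariance step itself. Because the immersion counts $m(\langle p\rangle,\langle q\rangle)=\sum_{n\in\Z}m(p,\phi^n(q))$ are global sums over the $\Z$-orbit, a seemingly local tangency or flip at one homoclinic point can in principle alter counts at other, distant generators via iterates. Controlling these global effects — showing that the chain map is well defined, commutes with $\partial$, and is a chain equivalence — requires a careful combination of the dynamical properties of $\phi$ near $x$ with the combinatorics of orientation preserving immersions developed earlier in the paper, and will be the technical heart of the proof.
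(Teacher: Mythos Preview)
Your proposal is correct and follows essentially the same route as the paper: perturb the csi isotopy to generic position, reduce invariance to a finite sequence of elementary tangency moves, and verify that each move preserves the homology via the results you cite (\refstartend, \refmovechar, \refflipviamixed, \refrsprimaryisom, \refprimmoveinvar, \refmixedinvar). The one refinement worth noting is that the paper makes the trichotomy into primary, mixed, and secondary moves explicit and treats the secondary case separately via \refsecunimport; this, rather than \refstartend, is what actually justifies your claim that the immersion counts $m(\langle p\rangle,\langle q\rangle)$ --- not just the generator set --- remain constant between exceptional times.
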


The proof has to combine analytical {\em and} combinatorial arguments since a
primary point $p \in \mathcal H_{pr}$ might vanish (analogously arise) in two
ways:

\quad (i) $p$ vanishes as intersection point {\em or}

\quad (ii) $p$ persists as intersection point, but is no longer primary.

\vsp 

The invariance implies an existence and bifurcation criterion for homoclinic
points and the fixed point.
Conjecturally Hamiltonian isotopies are naturally strongly intersecting.
Moreover, \refintroinvar\ allows to classify homoclinic tangles up to csi
isotopy.

\vsp

There are several versions of homoclinic Floer homology with quite different
flavours. Their well-definedness can be easily deduced from the construction and
well-definedness of primary Floer homology.

\vsp

One version is {\em chaotic Floer homology} $\Hhat_*$ which takes into account
the periodic points `near' a homoclinic tangle. More precisely, we have a whole
sequence $n \mapsto \Hhat_*(x,\phi^n)$ where $n$ is the number of iterates of
the symplectomorphism. For fixed $n$, the boundary operator associated to
$\Hhat_*(x, \phi^n)$ counts only those connecting immersions whose range does
not contain any fixed points of $\phi^n$, i.e. $n$-periodic points of $\phi$.
This leads to an interesting behaviour of the sequence $n \mapsto \Hhat_*(x,
\phi^n)$ and the definition of a symplectic zeta function
\beqs
\ze_{x,\phi}(z):= \exp\left(\sum_{n=1}^\infty \frac{\chi(\Hhat_*(x,\phi^n))}{n}
z^n\right )
\eeqs
where $\chi(H^{\Fix}_*(x,\phi^n))$ denotes the Euler characteristic of
$\Hhat_*(x,\phi^n)$. The study of this function is an ongoing project.

\vsp

An important question in symplectic dynamics is the growth behaviour of
symplectomorphisms under iteration. Among others, it has been studied
extensively by Polterovich \cite{polterovich3}, \cite{polterovich4}. By means of
the growth behaviour, group theoretic question about the group of Hamiltonian
diffeomorphisms can be answered. In \cite{polterovich4}, a Hamiltonian version
of the Zimmer program is devised and the proofs relie on the growth behaviour of
iterated Hamiltonian diffeomorphisms.

If we want to study the iteration behaviour of symplectomorphisms by means of
primary Floer homology we easily find 
\begin{equation}
\label{rank}
\rk H_*(x, \phi) \leq \rk H_*(x, \phi^n).
\end{equation}
For Hamiltonian diffeomorphisms, equality in \refrank\ turns out to be
equivalent to proving \refintroinvar\ for Hamiltonian diffeomorphisms. 
Thus the strong invariance in \refintroinvar\ seemingly opposes easy examples
with 
$\rk H_*(x, \phi) < \rk H_*(x, \phi^n)$.

Another version of homoclinic Floer homology, namely {\em semi-primary Floer
homology} $\Hti_*$, is more apt for detecting increasing rank. Instead of
primary points, its chain complex is generated by so-called semi-primary points.
$\Hti_*$ has weaker invariance properties than primary Floer homology. It is
easy to find examples with
\beqs
\Hti_*(x, \phi) < \Hti_*(x, \phi^n).
\eeqs
Increasing rank of semi-primary Floer homology actually means that parts of the
tangle wrap in a certain way around some genus of the surface. 

\vsp

This line of thoughts has been completed in Hohloch \cite{hohloch2}. In that work, primary Floer homology on $\R^2$ and so-called
{\em cylinder Floer homology} $\msH_*$ (a variant of semi-primary Floer
homology) on the infinite cylinder $\mcZ$ are studied using the filtration by
the symplectic action $\mcA$. The action interval of the filtered groups appears
as an upper index. The action spectrum is denoted by $\Spec(x, \phi)$ and the
minimal distance between two action levels by $\gap(x, \phi)$. The boundary
operator is modified in such a way that we keep track of the homotopy class on
$\mcZ$. 

\begin{Theorem}[\cite{hohloch2}]
Let $\phi \in \Symp(\R^2)$ resp.\ $\phi \in \Ham^c(\mcZ)$.
Let $b \in \Spec(\phi, x)$ and $0 < \ep \leq \frac{1}{2} \gap(\phi, x)$. 
Assume that there are $k$ primary classes with action $b$.
Then we obtain for the homoclinic Floer homology on $\R^2$ resp.\ $\mcZ$
\begin{gather*}
H^{]b - \ep, b+ \ep]}_*(\phi, x) \simeq \Z^k \quad \mbox{and} \quad
H^{]b - \ep, b+ \ep]}_*(\phi^n, x) \simeq (\Z^{k})^n, \\
\msH^{]b - \ep, b+ \ep]}_*(\phi, x)  \simeq \Z^k \quad \mbox{and} \quad
\msH^{]b - \ep, b+ \ep]}_*(\phi^n, x) \simeq (\Z^{k})^n.
\end{gather*}
Thus the rank grows linearly with the number of iterations.
\end{Theorem}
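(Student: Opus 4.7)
The strategy is to reduce the claim to two independent observations: the set of primary (resp.\ semi-primary) homoclinic classes of $(\phi^n, x)$ is an $n$-fold refinement of that of $(\phi, x)$, while on an action window of width smaller than $\gap(\phi, x)$ the boundary operator automatically vanishes. Combining these two facts immediately identifies the filtered homology with the filtered chain group and gives the stated ranks.

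First I would compare the generators. Since $W^s(x, \phi^n) = W^s(x, \phi)$ and likewise for $W^u$ (stable/unstable manifolds of $x$ are preserved under iteration), the sets $\mcH$, $\mcH_{[x]}$ and $\mcHpr$ are identical as subsets of $M$ for $\phi$ and $\phi^n$. What does change is the acting group $\Z$: a $\phi$-orbit $\{\phi^j p \mid j \in \Z\}$ splits into the $n$ disjoint $\phi^n$-orbits $\langle p \rangle, \langle \phi(p) \rangle, \ldots, \langle \phi^{n-1}(p) \rangle$. Hence each primary $\phi$-class lying at action level $b$ produces exactly $n$ primary $\phi^n$-classes at the same level, giving $k$ versus $kn$ generators of $C^{]b-\ep, b+\ep]}_*$. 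The same bookkeeping applies verbatim to the semi-primary classes used for cylinder Floer homology on $\mcZ$.

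Second, I would exploit the action filtration. The symplectic action $\mcA$ of a contractible homoclinic point $p$ is the $\om$-area bounded by $[p,x]_s \cup [p,x]_u$ (with a chosen capping disk on $\mcZ$), a purely geometric quantity attached to the Lagrangian pair $(W^s, W^u)$ and the fixed point $x$. Consequently $\mcA$ is $\phi$-invariant, descends to $\langle p \rangle$ for every iterate, and satisfies $\Spec(\phi^n, x) = \Spec(\phi, x)$ as well as $\gap(\phi^n, x) = \gap(\phi, x)$; in particular $\ep$ is admissible for $\phi^n$. The hypothesis $0 < \ep \leq \tfrac{1}{2}\gap(\phi, x)$ then ensures that $]b-\ep, b+\ep]$ meets the common spectrum only at $b$, so every generator of $C^{]b-\ep, b+\ep]}_*$ has action exactly $b$. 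Since the orientation-preserving immersions counted by $m(\langle p \rangle, \langle q \rangle)$ carry strictly positive $\om$-area, the operator $\partial$ strictly decreases $\mcA$ and must therefore vanish on a single action level. Combining the two steps yields
\beqs
H^{]b-\ep, b+\ep]}_*(\phi, x) \simeq \Z^k, \qquad H^{]b-\ep, b+\ep]}_*(\phi^n, x) \simeq \Z^{kn} \simeq (\Z^k)^n,
\eeqs
and the same reasoning, together with the fact that the homotopy-decorated $\msH_*$-boundary is still trivially zero on one action level, gives the analogous statement for $\msH^{]b-\ep, b+\ep]}_*$ on $\mcZ$.

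The main obstacle is the bookkeeping needed in the second step: one must verify that the capping disks/reference paths entering $\mcA$ can be chosen so that the action values agree on the nose for $\phi$ and $\phi^n$, and that the positive-area property of the counted immersions persists in the $\mcZ$-setting after the homotopy labels are attached. Both points are standard area-positivity statements for orientation preserving immersions between Lagrangian segments and do not involve new analytic input; the bulk of the argument is therefore organizational rather than technical, and the apparent tension with \refintroinvar\ disappears because the invariance theorem applies only within csi isotopy classes and does not prevent the rank from growing under iteration.
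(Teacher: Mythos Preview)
The paper does not contain a proof of this theorem: it is quoted from the companion paper \cite{hohloch2} (see the sentence ``This line of thoughts has been completed in Hohloch \cite{hohloch2}'' and the citation in the theorem header). There is therefore no in-paper argument to compare your proposal against.

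That said, your outline is the natural one and almost certainly coincides with what is carried out in \cite{hohloch2}. The two ingredients you isolate---(i) a primary $\phi$-class splits into exactly $n$ primary $\phi^n$-classes because $W^s$ and $W^u$ are unchanged while the $\Z$-action coarsens, and (ii) on an action window shorter than the gap the boundary operator vanishes because every counted di-gon has strictly positive $\om$-area---are exactly the right ones, and together they force $H^{]b-\ep,b+\ep]}_*\simeq C^{]b-\ep,b+\ep]}_*$. Your remark that $\mcA$ is a geometric quantity of the pair $(W^s,W^u)$ and hence $\phi$-invariant (so $\Spec$ and $\gap$ agree for all iterates) is the correct justification for using the same $\ep$ for $\phi^n$. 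The only point to be careful about is the $\mcZ$-case: you should make explicit that the homotopy decoration on the boundary operator in $\msH_*$ does not introduce any new nonzero terms at a fixed action level, which you do mention but could state more sharply.
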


Homoclinic Floer homology is also linked to transport and (absolute) flux of a
dynamical system. In the symplectic plane $(\R^2, \om)$, the {\em absolute flux}
(briefly flux) of a symplectomorphism $\phi$ through a simply closed curve $c$
is defined as the volume of the set of points which are swept out of the
interior of the curve, i.e. 
\beqs
\mcF lux_\phi(c)=\Vol_\om(\phi(\Int(c)) \cap \Ext(c)).
\eeqs
It also can be defined for noncontractible curves on the cylinder.
Note that this notion is different from the flux homomorphism in symplectic
geometry (cf.\ \cite{mcduff-salamon-sympl}, \cite{polterovich2}) which, roughly
speaking, considers the difference between $\phi(\Int(c)) \cap \Ext(c)$ and
$\phi(\Ext(c)) \cap \Int(c)$.

The absolute flux has been studied by MacKay $\&$ Meiss $\&$ Percival
\cite{mackay-meiss-percival} in order to gain a better understanding of the
transport. In their setting, transport means the motion of points unter (many)
iterations. Invariant curves have zero flux and are therefore complete barriers
for the transport. MacKay $\&$ Meiss $\&$ Percival \cite{mackay-meiss-percival}
focus on partially invariant curves associated to cantori, homoclinic and
periodic points. In this case, transport is only possible through the
non-invariant part of the curve. The non-invariant part forms a so-called {\em
turnstile}. We generalize this notion in \cite{hohloch2} and distinguish between
{\em true, overtwisted} and {\em generalized turnstiles}. Overtwisted turnstiles
correspond to {\em mixed moves} and generalized turnstiles to {\em primary
moves} in the prove of \refintroinvar. 

\begin{Proposition}[\cite{hohloch2}]
True and overtwisted turnstiles are annihilated by the boundary operator.
\end{Proposition}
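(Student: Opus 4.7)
The plan is to represent a true or overtwisted turnstile as an explicit integer chain $T \in C_*(x,\phi)$ and then to show $\partial T = 0$ by exhibiting a sign-reversing involution on the immersions that contribute to $\partial T$. Throughout, one should think of a turnstile as a topological disc whose boundary alternates between a stable and an unstable segment joining two \emph{tip} homoclinic points $p$ and $q$, with constraints on which homoclinic points may lie in its interior—no extra ones for \emph{true}, only tightly prescribed ones for \emph{overtwisted}.

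First I would verify the chain-level structure. Using the interior restriction, the tip points $p,q$ satisfy $]p,x[_s\, \cap\, ]p,x[_u\, \cap\, \mcH_{[x]} = \emptyset$ (and similarly for $q$), so $p,q \in \mcH_{pr}$. By the Maslov-index computation of \refindexprim\ the two corners of the turnstile carry the same Maslov index, so $\mu(\langle p\rangle)=\mu(\langle q\rangle)$. Reading the boundary orientation of the turnstile disc as the source of signs, the turnstile represents a chain
\[
T \;=\; \langle p \rangle \;+\; \varepsilon \, \langle q\rangle \;\in\; C_{\mu(\langle p\rangle)}(x,\phi), \qquad \varepsilon \in \{\pm 1\}.
\]

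Next I would compute $\partial T$ by examining, for each target class $\langle r \rangle$ with $\mu(\langle r\rangle)=\mu(\langle p\rangle)-1$, both $m(\langle p\rangle, \langle r\rangle)$ and $m(\langle q\rangle, \langle r\rangle)$. The key step is a bijection between immersions contributing to the first sum and immersions contributing to the second, obtained by \emph{gluing on the turnstile disc}: given an orientation-preserving immersion $u$ with corner at $p$ ending at some $\phi^n(r)$, attaching the turnstile along the stable (resp.\ unstable) arc of its boundary yields an immersion with corner at $q$ ending at the same $\phi^n(r)$. Inverting the gluing gives the other direction. The corner at $p$ and the corner at $q$ are swapped between stable-outgoing and unstable-outgoing types; by the sign conventions for $m$ (cf.\ \refsignskewsym\ and \refLsigns) this exchange flips the sign of the immersion contribution, so the two sums cancel pairwise, giving $\partial T = 0$.

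The main obstacle will be verifying that the glue-with-turnstile map is genuinely a well-defined bijection. One must rule out rogue immersions with corner at $p$ (or $q$) whose image already contains part of the turnstile interior, and one must ensure no partner is missed because the extension fails to remain a valid orientation-preserving immersion. Here the \emph{true}/\emph{overtwisted} hypothesis enters crucially: it precisely restricts the primary homoclinic points available in the turnstile interior, so the only immersions with a $p$-corner not admitting a $q$-partner would have to involve a forbidden interior configuration. For a \emph{generalized} turnstile this exclusion fails, which is consistent with such turnstiles corresponding to primary moves in \refintroinvar\ and \emph{not} being annihilated by $\partial$. A secondary subtlety is the $\Z$-equivariance: the pairing must respect the $\phi$-action so that it descends to a bijection on $\tilde{\mcH}_{pr}$-classes, which follows because gluing the turnstile commutes with iterating $\phi$.
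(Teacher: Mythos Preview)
This proposition is quoted from the companion article \cite{hohloch2} and is not proved in the present paper; no argument appears here, and even the precise definitions of true, overtwisted, and generalized turnstile are deferred to \cite{hohloch2}. There is thus no proof in this paper against which your attempt can be compared.

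Independently of that, your reading of the statement looks off. You model a turnstile as a disc with tips $p,q$ of equal Maslov index and encode it as a homogeneous chain $T=\langle p\rangle+\varepsilon\langle q\rangle$. But the paper itself writes ``If the primary points $p$ and $q$ form a true turnstile and $v\in\mcM(p,q)\neq\emptyset$'', which forces $\mu(p,q)=1$, hence $\mu(\langle p\rangle)\neq\mu(\langle q\rangle)$, and your $T$ does not lie in a single chain group. The result you invoke, \refindexprim, only constrains $\mu(p)\in\{\pm1,\pm2,\pm3\}$ for primary $p$; it says nothing about $\mu(p)=\mu(q)$. More plausibly, ``annihilated by the boundary operator'' means that the di-gons constituting the turnstile contribute zero to $\partial$ --- as visible in the worked examples, where the two lobes between $p$ and the adjacent iterates $q$ and $q^{-1}$ cancel in $\partial\langle p\rangle$ because $\langle q\rangle=\langle q^{-1}\rangle$ --- rather than that some associated chain is a cycle. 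Your gluing bijection swaps a $p$-corner for a $q$-corner across a grading shift of $1$, so it cannot pair contributions within a fixed degree and does not yield $\partial T=0$ even were $T$ well-defined.
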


In our setting, the Maslov index and the (relative) action are invariant under
iteration of the symplectomorphism. The flux through a homoclinic orbit $\langle
p \rangle$ is defined as the flux through a curve parametrizing $[p,x]_s \cup
[p,x]_u$. It transforms
\beqs
\mcF lux_{\phi^n}(\langle p \rangle)= n \mcF lux_\phi(\langle p \rangle)
\eeqs
as the Maslov index and action in classical Floer theory, see Ginzburg $\&$
G\"urel \cite{ginzburg-gurel}.
MacKay $\&$ Meiss $\&$ Percival \cite{mackay-meiss-percival} identified the flux
(under certain assumptions) with Mather's \cite{mather1} {\em difference in
action $\lapla W$}. If the primary points $p$ and $q$ form a true turnstile and
$v \in \mcM(p,q) \neq \emptyset$ we extend their result to

\begin{Theorem}[\cite{hohloch2}]
Under certain assumptions holds
\beqs
\mcA(\langle p \rangle )-\mcA( \langle q \rangle )=\mcA(\langle p \rangle ,
\langle q \rangle)= \int_v \om =\mcF lux_\phi(\langle p \rangle)=\lapla W_{p,q}.
\eeqs
\end{Theorem}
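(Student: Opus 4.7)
The plan is to prove the chain of equalities from left to right, since each link uses a different tool: definitional bookkeeping, Stokes' theorem, a geometric lobe comparison, and finally the MacKay--Meiss--Percival/Mather identification.

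First I would deduce $\mcA(\langle p \rangle) - \mcA(\langle q \rangle) = \mcA(\langle p \rangle, \langle q \rangle)$ directly from the definitions. The absolute action $\mcA(\langle p \rangle)$ is built by integrating a primitive $\lam$ of $\om$ around the loop $[p,x]_s \cup [p,x]_u$, and its value is $\phi$-invariant since $\phi^*\lam - \lam$ is closed; the relative action is, by construction, the additive functional on the chain complex that records precisely the change of this primitive between the two loops, so the identity is tautological once both definitions are spelled out.

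Next, for $\mcA(\langle p \rangle, \langle q \rangle) = \int_v \om$ with $v \in \mcM(p,q)$, I would use the two-dimensional analogue of the standard Floer identity \emph{action difference equals symplectic area of the connecting strip}. Since $v$ is an orientation preserving immersion whose boundary lies on $W^s \cup W^u$ and connects $p$ to $q$, writing $\om = d\lam$ and applying Stokes on the image of $v$ yields exactly the difference of the $\lam$-integrals along $[p,q]_s$ and $[p,q]_u$, which is $\mcA(\langle p \rangle, \langle q \rangle)$. The hypothesis that $p,q$ are primary and form a true turnstile ensures $v$ covers its image with degree $+1$, so no cancellation of areas occurs.

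The third equality $\int_v \om = \mcF lux_\phi(\langle p \rangle)$ is the geometric heart of the matter. Under the true-turnstile hypothesis the image of $v$ is precisely one lobe of the turnstile of the closed curve $c_p := [p,x]_s \cup [p,x]_u$, namely (up to orientation) the set $\phi(\Int(c_p)) \cap \Ext(c_p)$. By the definition $\mcF lux_\phi(c_p) = \Vol_\om(\phi(\Int(c_p)) \cap \Ext(c_p))$ this $\om$-volume is $\int_v \om$. The final equality $\mcF lux_\phi(\langle p \rangle) = \lapla W_{p,q}$ is then the MacKay--Meiss--Percival identification of lobe area with Mather's difference in action of the generating function; I would cite \cite{mackay-meiss-percival} and \cite{mather1} and only verify that our true-turnstile setup satisfies the hypotheses there.

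The main obstacle will be pinning down the "certain assumptions" so that all four identities hold simultaneously: one needs primary points $p,q$ forming a true (not overtwisted or generalized) turnstile, existence of $v \in \mcM(p,q) \neq \emptyset$ together with the degree-one property, and enough structure (e.g.\ a twist-map setting) to admit a Mather generating function. With those hypotheses assembled, no further analytical subtleties arise and the four steps chain together.
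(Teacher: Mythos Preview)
This theorem is not proved in the present paper at all: it is quoted in the introduction as a result from the companion paper \cite{hohloch2}, with the label ``[\cite{hohloch2}]'' attached to the theorem environment. The current paper only sets up the context (true turnstiles, the action, the flux through a homoclinic orbit) and then cites the identity as established elsewhere. So there is no proof here against which to compare your proposal.

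That said, your outline is a plausible sketch of the argument one would expect in \cite{hohloch2}: the first equality is definitional, the second is Stokes applied to the immersed di-gon, the third is the identification of the lobe with the image of $v$ under the true-turnstile hypothesis, and the last is the MacKay--Meiss--Percival/Mather result. Your caveat about needing to pin down the ``certain assumptions'' is exactly right and is the only substantive work; the paper itself flags this by writing ``under certain assumptions'' without spelling them out. If you want a genuine comparison you will have to consult \cite{hohloch2} directly.
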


Altogether, $H_* $ is the first invariant which takes the {\em algebraic}
interaction of homoclinic points into account and links them to dynamical
properties like the absolute flux and growth of symplectomorphisms. 

\subsection*{Acknowledgments}

The author wishes to thank G. Noetzel, M. Schwarz, Z. Xia and E. Zehnder for
helpful discussions, explanations and references.

\section{Immersions, cutting and gluing}

\subsection{Maslov index and homotopy class}

In this subsection, we recall the definition of the Maslov index for Lagrangian
subspaces in $R^{2n}$ as it can be found in McDuff $\&$ Salamon
\cite{mcduff-salamon-sympl}. Using suitable trivialitations, Floer \cite{floer1}
generalized it to symplectic manifolds. Finally, we introduce crucial notations
like (un)stable segments and homotopy classes for homoclinic points.

\vsp

Denote by $\mathcal L(n)$ the space of Lagrangian subspaces of $(\R^{2n},
\om_0)$ with $\om_0:= \sum_{i=1}^n dx_i \wedge dy_i$.
Represent $\Lam \in \mathcal L(n)$ by $\Lam= \left( \begin{smallmatrix} X \\ Y\end{smallmatrix} \right)$ with $U:= X +i Y \in
U(n)$ and define
$\rho : \mathcal L(n) \to S^1$, $ \rho(\Lam):= \det (U \circ U)$.
For a loop $\Lam : \R \slash \Z \to \mathcal L(n)$, define the {\em Maslov index
of loops of \lagr\ subspaces} by $\mu(\Lam):= \deg (\rho \circ \Lam)$
where $\deg$ denotes the mapping degree of $\rho \circ \Lam : \R \slash \Z \to
S^1$. If $\al : \R \to \R$ is a lift of $\rho \circ \Lam$, i.e. $\det
(X(t)+iY(t))= e^{i\pi \al(t)}$,
we obtain
$\mu(\Lam)= \al(1)-\al(0)$.

\vsp

Let $(M, \omega)$ be a $2n$-dimensional symplectic manifold and $\phi$ a
symplectomorphism with hyperbolic fixed point $x$. For symplectomorphisms, the
(un)stable manifolds $W^u:=W^u(x, \phi)$ and $W^s:=W^s(x, \phi)$ are Lagrangian
submanifolds and there are injective immersions $\ga_u: \R^n \to W^u$ and
$\ga_s: \R^n \to W^s$ with $\ga_u(0)=x=\ga_s(0)$. 
Provide $\mcP(W^u, W^s):=\{ \be: [0,1] \to M \mid \be(0) \in W^u, \be(1) \in
W^s\}$ with the smallest topology such that the following three maps are
continuous:
\begin{align*}
& \mcP(W^u, W^s) \to C([0,1];M),  && \be \mapsto \be, \\
& \mcP(W^u, W^s) \to \R, &&  \be \mapsto \ga_u^{-1}(\be(0)), \\
& \mcP(W^u, W^s) \to \R,  && \be \mapsto \ga_s^{-1}(\be(1)).
\end{align*}
Fix $\al \in \mcP(W^u, W^s)$ and denote its connected component by
$\mcP_\al(W^u, W^s)$. Identify $p$, $q \in \mcH$ as constant paths in $
\mcP_\al(W^u, W^s)$.
Let $v:[0,1] \to \mcP(W^u,W^s)$ with $v(0)\equiv p$ and $v(1) \equiv q$ and see
it as map $v: [0,1]^2 \to M$ via $v(s,t):=v(s)(t)$. 
The square $[0,1]^2$ is contractible and we can find a trivialization $\Phi:=
\Phi_v : v^*TM
\to [0,1]^2 \x \R^{2n}$ such that 
the symplectic
form on the fibers is mapped to the standard $\om_0$ on $\R^{2n} \simeq \C^n$,
$\Phi$ is \cnst\ on
$\{0\} \x [0,1]$ and on $\{1\} \x [0,1]$,
and
$\Phi(T_pW^s)=  i \Phi(T_pW^u)$ and
$\Phi(T_qW^s)= i \Phi(T_q W^u)$.

Denote by $\del [0,1]^2$ the boundary of $[0,1]^2$ and define the loop $\Lam_v:
\del [0,1]^2 \to \mathcal L(n)$ starting in $(0,0)$ and running through $(1,0)$,
$(1,1)$ and $(0,1)$ back to $(0,0)$ piecewise via
\begin{align*}
&(\xi,0)  \mapsto \Phi(T_{v(\xi,0)}W^u), && (\xi,1)  \mapsto
\Phi(T_{v(\xi,1)}W^s), \\ 
&(1,\eta)  \mapsto e^{\frac{i \pi \eta }{2}} \Phi(T_qW^u), &&
(0,\eta) \mapsto e^{ \frac{i \pi (\eta-1) }{2}} \Phi(T_pW^s).
\end{align*}
Under the above conventions, we define the {\em relative Maslov index for $p$,
$q \in \mcH$} via $\mu(p,q):=\mu(\Lam_v)$. 
If $\pi_2(M)=0$, then $c_1|_{\pi_2(M)}=0$ (where $c_1$ denotes the first Chern
class of $M$) and the construction is independent from the choosen path $v$ and
the trivialization $\Phi$. 
Concerning the two-dimensional situation, recall that the second homotopy class
of a closed surfaces with genus $g \geq 1$ always vanishes.

\vsp

From now on, $(M,\om)$ is either $(\R^2, dx \wedge dy)$ or a closed,
two-dimensional \mf\ with genus $g \geq 1$. 
For $i \in \{u,s\}$ the immersions $\ga_i: \R \to W^i$ induce an ordering $<_i$
resp.\ $\leq_i$ on $W^i$ via 
\beqs
\ga_i(t)<_i \ga_i(\tti) \ \IFF \ t<\tti \quad \mbox{resp.}\quad
\ga_i(t)\leq_i \ga_i(\tti) \ \IFF \ t\leq \tti.
\eeqs
By abuse of notation, we say that $p$, $q \in W^i$ induce an ordering on $W^i$
via setting $p<_i q$ resp.\ $p \leq_i q$.
For $i \in \{0,1 \}$ consider $p$, $q \in W^i$ and set $t_i^p=\ga_i^{-1}(p)$,
$t_i^q:=\ga_i^{-1}(q)$, $t_i^-:=\min\{t_i^p, t_i^q\}$ and $t_i^+:=\max\{t_i^p,
t_i^q\}$. We call
\beqs
[p,q]_u :=\ga_u([t_u^-, t_u^+]) \quad \mbox{ resp. } \quad [p,q]_s
:=\ga_s([t_s^-, t_s^+])
\eeqs
the {\em segments} in $W^u$ resp.\ $W^s$ between $p$ and $q$. The segments are
independent of the chosen immersion and a priori just sets of points, thus
$[p,q]_i=[q,p]_i$. Analogously, we define the open and half-open segments
$]p,q[_i$ and $[p,q[_i$.

\vsp

Now we assign to each $p \in \mcH$ a {\em homotopy class} in $\pi_0(\mcP(W^u,
W^s)) \simeq \pi_1(M,x)$: Denote by $c_p: [0,1] \to W^u \cup W^s$ a curve with
$c_p(0)=x=c_p(1)$ which runs through $[x,p]_u$ to $p$ and through $[p,x]_s$ back
to $x$.
Set $[p]:= [c_p] \in \pi_1(M, x)$ and $[-p]$ for the path with the inverse
parametrization. Then $\mcH_{[x]}:=\{p \in \mcH \mid [p] =[x] \}$ is the set of
{\em contractible} homoclinic points. $\mcH_{[x]}$ is invariant under the action
of $\phi$.
Moreover, if $\phi=\phi_1$ is the time-1 map of a flow and $\xi: S^1 \to M$,
$\xi(t):=\phi_t(x)$ and $\xi$ is contractible or $\pi_1(M,x)$ abelian then
$[p]=[\phi_1^n(p)]$ for all $p \in \mcH$ and $n \in \Z$.

\begin{Remark}
\label{mu Z-comp}
For contractible $p$, $\pti$, $q \in \mcH$, we observe: 
\begin{enumerate}[(1)]
\item
$\mu(q,p)=-\mu(p,q)$ and $\mu(p, q) + \mu(q, \pti)= \mu(p,\pti)$.
\item
$\mu(p, q)= \mu(\phi^n(p), \phi^n(q))$ for $n \in \Z$, i.e. the (relative)
Maslov index of $p$ and $q$ is invariant under the $\Z$-action of $\phi$ on
$\mcH$.  
\item
$\mu(p, \phi^n(p))=0$ for all $n \in \Z$.
\item
$\mu(p,q)=\mu(p,\phi^n(q))$ for $n \in \Z$.
\end{enumerate}
\end{Remark}

The (relative) Maslov index yields a {\em grading} $\mu: \mcH_{[x]} \to \Z $ via
$\mu(p):=\mu(p,x)$ such that for contractible $p$ and $q$ holds
$\mu(p,q)= \mu(p,x) +\mu(x,q)=\mu(p,x)- \mu(q,x)= \mu(p)-\mu(q)$.

\begin{Remark}
Let $\tau : (\Mti, \omti) \to (M, \om)$ be the universal cover with
$\tau^*\om=\omti$ and $p$, $q \in \mcH$ with $[p]=[q]$. Denote by $[\pti,
\qti]_i$ the lift of $[p,q]_i$ to the universal cover $(\Mti, \omti)$ starting
in $\pti \in\tau^{-1}(p)$. Then $\mu(p,q)=\mu(\pti, \qti)$.
\end{Remark}


\subsection{Immersions, di-gons and hearts}

This subsection introduces certain di-gons, also known as 2-gons, lunes or
half-moons (Chekanov \cite{chekanov}, de Silva \cite{de silva}, Gautschi $\&$
Robbin $\&$ Salamon \cite{gautschi-robbin-salamon}, Robbin \cite{robbin}). They
will be crucial for the definition of the boundary operator of the Floer chain
complex.

\vsp 

A {\em di-gon} is the polygon $D\subset \R^2$ with two convex vertices at $(-1,
0)$ and $(1,0)$ sketched in Figure \ref{2-gon} (a). Denote its upper boundary by
$B_s$ and its lower boundary by $B_u$.

A {\em heart} is either the polygon $D_b $ of Figure \ref{2-gon} (b) or the
polygon $D_c$ of Figure \ref{2-gon} (c). A heart is characterised by two
vertices at $(-1,0)$ and $(1,0)$ where one is convex and one concave. Denote
their upper boundaries by $B_s$ and their lower boundaries by $B_u$.

\begin{figure}[H]
\begin{center}

\input{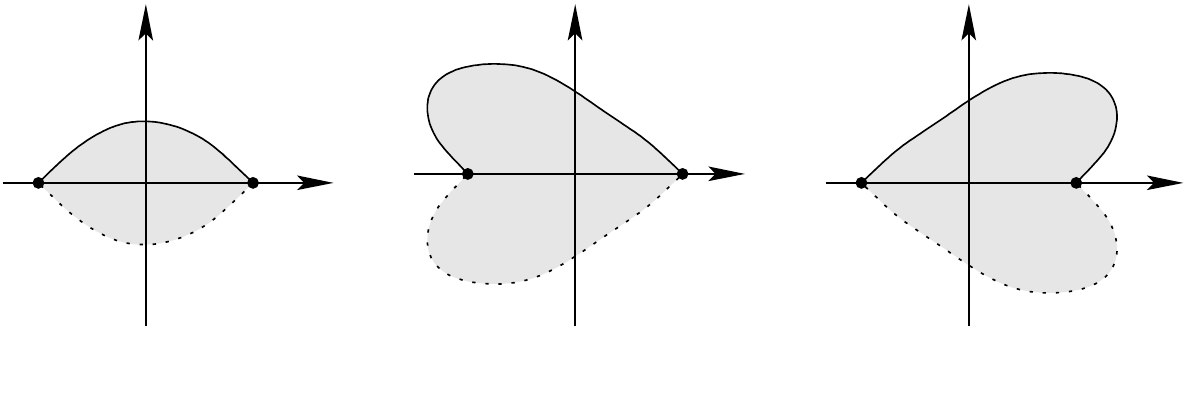_t}

\caption{Di-gon and heart.}
\label{2-gon}

\end{center}
\end{figure}

\vsp

We require the immersions in the following definitions to be immersions also on
the boundaries and vertices.
Thus the image of a small neighbourhood of a convex (concave) vertex of a
polygon is a wedge-shaped region with angle smaller (larger) than $\pi$.

\begin{Definition}
Let $D$ be the di-gon and $p$, $q \in \mcH$ with $\mu(p,q)=1$. We define
$\mcM(p,q)$ to be the {\em space of smooth, immersed di-gons} $v : D \to M$
which are orientation preserving and satisfy
$v(B_u) \subset W^u$, $v(B_s) \subset W^s$, 
$v((-1,0))=p$ and $v((1,0))=q$.
Denote by $G(D)$ the group of orientation preserving diffeomorphisms of $D$
which preserve the vertices and call $\mcMhat(p,q):=\mcM(p,q) \slash G(D)$ the
space of unparametrized immersed di-gons.
\end{Definition}

Since there is exactly one segment $[p,q]_i$, $i \in \{s,u\}$, joining $p$, $q
\in \mcH$ and since $\pi_2(M)=0$ we deduce $\# \mcMhat(p,q) \in \{0,1\}$ for $p$
and $q$ with $\mu(p,q)=1$.

\begin{Definition}
Consider the hearts $D_b$ and $D_c$ and $p$, $q \in \mcH$ with $\mu(p,q)=2$. We
define $\mcN_b(p,q)$ resp.\ $\mcN_c(p,q)$ to be the {\em space of smooth immersed
hearts} $w: D_b \to M$ resp.\ $w: D_c \to M$ which are
orientation preserving and satisfy
$w(B_u) \subset W^u$, $w(B_s) \subset W^s$, 
$w(-1,0)=p$ and $w(1,0)=q$.
We set $\mcN(p,q):=\mcN_b(p,q) \ \dot{\cup}\ \mcN_c(p,q)$.
Denote by $G(D_b)$ resp.\ $G(D_c)$ the group of orientation preserving
diffeomorphisms of $D_b$ resp.\ $D_c$ which preserve the vertices and let
$\mcNhat_b(p,q):=\mcN_b(p,q) \slash G(D_b)$ resp.\ $\mcNhat_c(p,q):=\mcN_c(p,q)
\slash G(D_c)$ and $\mcNhat(p,q) :=\mcNhat_b(p,q) \ \dot{\cup}\ \mcNhat_c(p,q)$
be the spaces of unparametrized immersed hearts.
\end{Definition}

If we work with the spaces $\mcM(p,q)$ and $\mcN(p,r)$ we always implicitly
assume $p$, $q$, $r \in \mcH$ with $[p]=[q]$, $[p]=[r]$, $\mu(p,q)=1$ and
$\mu(p,r)=2$.

\subsection{Winding number}

In the following, we define the winding number for di-gons and hearts. It will
be used for analysing and classification purposes.

\vsp

Consider the universal cover $\tau : \Mti \to M$ with induced orientation as
topological \mf. For all $\zti \in \Mti$, the orientation induces an isomorphism
$H_2(\Mti, \Mti \backslash\{\zti\}) \simeq \Z$ and the contractibility of
$\Mti\simeq \R^2$ implies $H_1(\Mti \backslash\{\zti\}) \simeq H_2(\Mti, \Mti
\backslash\{\zti\})$. Denote the fundamental class of $S^1$ by $[S^1]$.
Now consider a continuous path $\gati :S^1 \to \Mti$ and $\zti \in \Mti
\backslash \Img (\gati)$. We define the {\em winding number of $\gati$ w.r.t.
$\zti$} by $\Ind_\gati(\zti):=\gati_*([S^1]) \in H_1(M\backslash\{\zti\}) \simeq
\Z$.

Identifying $\Mti$ with $\R^2$ by an orientation preserving \diffeo, the winding
number also can be seen as mapping degree of $S^1 \to S^1$, $t \mapsto
\frac{\gati(t) - \zti}{\betrag{\gati(t)-\zti}}$.

\begin{Definition}
Let $A$ stand for $D$, $D_b$ or $ D_c$ and consider $v: A \to M$ and a lift
$\vti: A \to \Mti$ of $v$.
The winding number $\Ind_\vti(\zti)$ of $\vti$ w.r.t. $\zti \in \Mti \backslash
\vti(\del A)$ is defined as the winding number of the path $\vti |_{\del A}$
w.r.t. $\zti$ with $\del A$ parametrized counterclockwise.

The {\em winding number} of $v$ w.r.t. $z \in M \backslash v(\del A)$ is defined
as
\beqs
\Ind_v(z):=\sum_{\zti \in \tau^{-1}(z)} \Ind_\vti(\zti)
\eeqs
\end{Definition}

since $\Ind_\vti$ vanishes for all $\zti$ lying in the unbounded component of
$\Mti \backslash \vti(\del A)$.


There is another way to compute the winding number of $\vti: A \to \Mti$:
For $\zti \in \Mti\backslash \vti(\del A)$ let $\Bti$ be a small ball around
$\zti$ and similarly consider small balls $B_i$ around the $z_i \in
\tau^{-1}(\zti)$. Identify $\del \Bti \simeq S^1 \simeq \del B_i$ and set
$\hat{A}:=A \backslash (\bigcup_{z_i \in \tau^{-1}(\zti)}B_i)$.
Then using some kind of `local degree' (see Bredon \cite{bredon}) we obtain
\begin{align*}
\Ind_\vti(\zti) = \deg( \del A \to \del \Bti) 
 = \deg (\del \hat{A} \to \del \Bti) + \deg (\bigcup_{z_i \in \tau^{-1}(\zti)}
\del B_i \to \del \Bti).
\end{align*}
Now if $N$ and $P$ are \cpt\ orientable \mf s of dimension $n$ without boundary
and if a smooth $\al: N \to P$ can be extended smoothly to some
$(n+1)$-dimensional \mf\ $Q$ with $\del Q=N$ then $\deg (\al)=0$ (see for
example Milnor \cite{milnor}).
Recognizing $\hat{A}$ as $Q$ and $(\bigcup_{z_i \in \tau^{-1}(\zti)} \del B_i)
\cup \del A$ as $N$ we deduce $\deg (\del \hat{A} \to \del \Bti)=0$ whereas the
term $ \deg (\bigcup_{z_i \in \tau^{-1}(\zti)} \del B_i \to \del \Bti)$ yields
for orientation preserving immersions:

\begin{Remark}
For $v \in \mcM(p,q)$ and $z \in M \backslash v(\del D)$ holds
$\Ind_v(z)=\#v^{-1}(z)$
and therefore in particular $\Ind_v \geq 0$. The analogous result is true for $v
\in \mcN(p,r)$.
If there is a component of $M \backslash v(\del A)$ with $\Ind_v < 0$ then $v$
is no immersion.
\end{Remark}

The union of those components of $M\backslash v(\del D)$ with $\Ind_v>0$ is
called the {\em interior} $\Int(v)$ of $v$. The union of the others is called
the {\em exterior} $\Ext(v)$ of $v$ (their winding number vanishes).

\vsp

The following remark will be needed for the existence of the `cutting points' in
the cutting construction \refexistenceqq. There we will need to know that the
vertices of an immersion are not multiply covered. Now choose a metric on $M$.
Since the image of our immersions stays in a compact region the following does
not depend on the choice of the metric.

\begin{Remark}
\label{inj around vertex}
Let $v \in \mcM(p,q)$.
Then there is $\ep>0$ such that $U_p:=v^{-1}(B_\ep(p))$ is a connected
neighbourhood of $(-1,0)\in D$ with $v|_{U_p}$ injective. 
As a consequence $\Ind_v=1$ on $B_\ep(p) \cap v(D)$ and $v(U_p)$ is the
wedge-shaped piece of $B_\ep(p)$ bounded by $([p,q]_u \cup [p,q]_s) \cap
B_\ep(p)$ with angle $<\pi$. 
exterior of $v$. 
An analogous statement is true for $q$. For $v \in \mcN(p,r)$ with vertices $p$
and $r$ the only change is $> \pi$ for the concave vertex.
\end{Remark}

Here the lack of self-intersections of $W^u$ and $W^s$ plays an important role
--- otherwise the statement is not true.

\subsection{Gluing and cutting}

Briefly, gluing of two immersed di-gons $v \in \mcMhat(p,q)$ and $\vhat \in
\mcMhat(q,r)$ with $\mu(p,q)=1=\mu(q,r)$ (and therefore $\mu(p,r)=2$) is the
construction which recognizes the tupel $(v,\vhat)$ as an element of
$\mcNhat(p,r)$. 
Cutting is the `inverse' construction which starts with $w \in \mcNhat(p,r)$ and
finds two significant points $q_u$, $q_s \in \mcH$ such that $w$ can be seen
either as tupel $(v,\vhat) \in \mcMhat(p,q_u) \x \mcMhat(q_u,r)$ or as tupel
$(v', \vhat') \in \mcMhat(p,q_s) \x \mcMhat(q_s,r)$.

\begin{Theorem}[Gluing]
\label{gluing}
Let $p$, $q$, $ r \in \mcH$ with $[p]=[q]=[r]$ and $\mu(p,q)=1=\mu(q,r)$. Let $v
\in \mcMhat(p,q)$ and $\vhat \in \mcMhat(q,r)$. Then the {\em gluing procedure
$\#$} for $v $ and $\vhat$ yields an immersed heart $w:=\vhat\#v \in
\mcNhat(p,r)$.
\end{Theorem}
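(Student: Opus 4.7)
The plan is to construct $w = \vhat \# v$ by identifying $v$ and $\vhat$ along their common vertex $q$, smoothing the glued domain near this identification point, and then verifying that the result is an orientation-preserving immersed heart. The construction is purely local near $q$: away from a small neighborhood of $q$, one simply takes the disjoint union of $v$ and $\vhat$, and the entire question reduces to understanding how the two wedges at $q$ fit together.

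First, I would do the local analysis. By \refinjaroundvertex\ there exist neighborhoods $U_v\subset D$ of the $q$-vertex of $v$ and $U_{\vhat}\subset D$ of the $q$-vertex of $\vhat$ on which each map is injective and covers a single wedge at $q$. Because $q\in W^u\cap W^s$ is a transverse intersection (here I use that $W^u$, $W^s$ have no self-intersections in the two-dimensional surface setting), the two manifolds divide a small disk around $q$ in $M$ into four wedges. The wedge covered by $v$ is the unique one bounded locally by $[p,q]_u$ and $[p,q]_s$; likewise $\vhat$ covers the wedge bounded by $[q,r]_u$ and $[q,r]_s$. Enumerate the possible configurations according to whether $p$ and $r$ lie on the same or on opposite sides of $q$ along $W^u$ (respectively $W^s$); there are a priori four cases.

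Next I would perform the gluing. Identify the $q$-vertices of the two copies of $D$ to form a topological space with a pinch at $q$. The pair of wedges covered in $M$ dictates a canonical resolution of this pinch in the domain: in the "opposite wedges" configuration the two wedges in $M$ touch only at $q$, and the pinch smooths to a single concave corner on one side of the boundary, yielding a heart of type $D_b$ or $D_c$ (the convex vertex $p$ or $r$ determines which). In the adjacent-wedge configurations the segments $[p,q]_u$ and $[q,r]_u$ (say) align head-to-tail along $W^u$, so the unstable boundary becomes smooth through $q$ while the stable boundary folds back there; again the smoothed domain is diffeomorphic to a heart. Define $w$ on the smoothed domain by setting $w=v$ on (the image of) $D_v$ and $w=\vhat$ on (the image of) $D_{\vhat}$. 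Smoothness away from $q$ is immediate; at the identified point the boundary arcs from $v$ and $\vhat$ meet tangentially along $W^u$ and $W^s$, so $w$ extends as a $C^\infty$ immersion. The decompositions $w(B_u)\subset W^u$, $w(B_s)\subset W^s$ and the vertex assignments $(-1,0)\mapsto p$, $(1,0)\mapsto r$ are built into the construction, and orientation is preserved because both factors are.

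The main obstacle is the vertex analysis at $q$: one must exhibit an explicit diffeomorphism from the smoothed glued domain to one of the standard model hearts in each configuration, and in particular rule out the degenerate case where $v$ and $\vhat$ occupy the same wedge at $q$. This degenerate case would force overlapping images near $q$ with the same orientation, contradicting the elementary fact (used implicitly via \refinjaroundvertex\ and $\mu(p,q)=\mu(q,r)=1$) that the wedges covered by two distinct immersed di-gons meeting only at a common vertex lie on different sides of at least one of $W^u$, $W^s$. Once this case is excluded, the remaining configurations are handled uniformly and determine which of $D_b$ or $D_c$ arises, completing the identification of $\vhat\#v$ with an element of $\mcNhat(p,r)$.
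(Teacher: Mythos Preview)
There is a genuine gap in your construction: you glue the two di-gons only at the vertex $q$, but the paper glues them along a common boundary \emph{segment}. This matters because in every configuration that actually arises, exactly one of the pairs $\{[p,q]_u,[q,r]_u\}$ or $\{[p,q]_s,[q,r]_s\}$ overlaps along a nontrivial arc near $q$ (one segment is contained in the other), rather than meeting head-to-tail. Concretely, the orientation-preserving condition forces the wedges of $v$ and $\vhat$ at $q$ to be \emph{adjacent} (never opposite and never identical): for $v$ the interior lies in the wedge swept clockwise from the $W^u$-ray towards $p$ to the $W^s$-ray towards $p$, while for $\vhat$ (with $q$ as the $(-1,0)$-vertex) the sweep direction is reversed. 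Working this out in local coordinates shows that $v$ fills, say, the NW wedge and $\vhat$ the SW or NE wedge --- never SE. Thus your ``opposite wedges'' case never occurs, and in the adjacent cases the shared edge is a genuine arc of $W^u$ or $W^s$, not a point.

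Your smoothing at the pinch then fails: in the adjacent case sharing a $W^s$-edge, the two stable boundaries $[p,q]_s$ and $[q,r]_s$ emanate from $q$ in the \emph{same} direction and overlap. If you merely identify the $q$-vertices and round off, the stable boundary of the glued domain ``folds back'' at $q$, which means $w|_{B_s}$ reverses direction there and is not an immersion. The paper's construction avoids this by identifying the entire overlapping segment (e.g.\ $[p,q_u]_u$ for the $q_u$-configuration): $v(B_u)$ is glued to the initial portion of $\vhat(B_u)$, which cancels the doubling and leaves $[p,r]_u$ as the unstable boundary. The concave vertex of the resulting heart then sits at $p$ or $r$ (where the absorbed di-gon's remaining vertex meets the interior of the other di-gon's boundary arc), not at $q$. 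This is exactly the inverse of the cutting procedure, and is the content of the reference to Chekanov.
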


\begin{proof}
The four possible geometric positions of the three involved points are described
in Figure \ref{(non)admissible shapes}. The $q$ which lies on that part of the
unstable manifold, which crossed the interior of the immersed heart after
passing the concave vertex, is called $q_u$ and analogously for $q_s$. 
The gluing construction $\#$ glues $v \in \mcM(p,q_u)$ and $\vhat\in
\mcM(q_u,r)$ along the common boundary segment $[p,q_u]_u$. For technical
details see Chekanov \cite{chekanov}. 
\end{proof}

\begin{figure}[h]
\begin{center}

\input{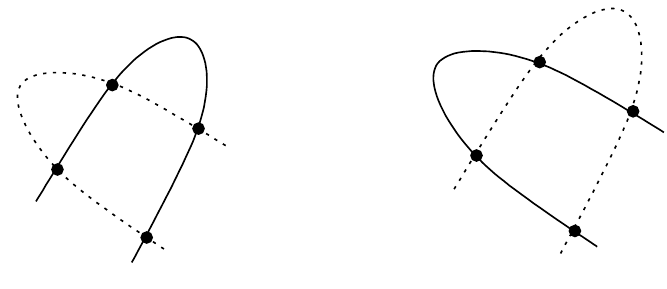_t}

\caption{Immersions with $\mu(p)=\mu(q)+1=\mu(r)+2$.}
\label{(non)admissible shapes}
\end{center}
\end{figure}

We call the two connected components of $W^s\backslash\{x\}$ resp.\
$W^u\backslash \{x\}$ the {\em branches} of the (un)stable \mf s.
$W^u$ and $W^s$ are called {\em strongly intersecting} (w.r.t. $x$) if each
branch of $W^u$ intersects each branch of $W^s$, i.e. $W^i_+ \cap W_j^+ \neq
\emptyset \neq W^i_- \cap W_j^+ $ for $i, \ j \in \{0,1\}$ and $i \neq j$.

\vsp

To be strongly intersecting is generic in $C^1$-topology on closed
$n$-dimensional \mf s (Takens \cite{takens}, Xia \cite{xia}). For $C^r$-topology
with $1 \leq r \leq \infty$ there are results by Robinson, Pixton and Oliveira
on $S^2$ and $T^2$. If the action of the symplectomorphism on the first homology
group is irreducible then Oliveira \cite{oliveira} proved $C^r$-genericity for
closed surfaces with genus $g \geq 2$. This hypothesis is not fulfilled by
symplectomorphisms isotopic to the identity (for example \ham\ \diffeo s). For
the latter ones, Xia \cite{xia3} proved strongly intersecting to be
$C^r$-generic on closed surfaces.

\begin{Theorem}[Cutting]
\label{existence q, q'}
Let $W^u$ and $W^s$ be strongly intersecting and transverse.
Let $p$, $ r \in \mcH$ with $[p]=[r]$ and $\mu(p,r)=2$ and $w \in \mcN(p,r)$. 
Then there are distinct, unique $q_u$, $q_s \in \mcH$ with
$\mu(p,q_i)=1=\mu(q_i, r)$ and $v_i \in \mcM(p,q_i)$, $\vhat_i \in \mcM(q_i,r)$
such that $\vhat_i \#v_i=w$ for $i \in \{s,u\}$.
\end{Theorem}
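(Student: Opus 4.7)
My plan is to construct $q_u$ and $q_s$ by following the two additional local branches of $W^u$ and $W^s$ at the concave vertex of $w$ that extend into $\Int(w)$, and then to cut the domain $A \in \{D_b, D_c\}$ of $w$ along the preimages of these arcs.

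Exactly one of $p, r$ is the concave vertex of $w$; say it is $r$ (the other case is symmetric). By \refinjaroundvertex, $w$ maps a small neighborhood of $(1,0)$ in $A$ to a wedge at $r$ of angle $>\pi$ bounded by $[p,r]_u$ and $[p,r]_s$, and the complementary wedge of angle $<\pi$ in a small ball $B_\ep(r)$ lies in $\Ext(w)$. This complementary wedge is itself bounded by the two ``other'' local branches of $W^u$ and $W^s$ through $r$, namely the continuations past $r$ not contained in $[p,r]_u \cup [p,r]_s$. Thus each of these continuations is, locally near $r$, contained in $\Int(w) \cup \{r\}$.

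I then define $q_u$ as the first point, measured from $r$ along the continuation of $W^u$ past $r$, at which this branch meets the boundary $w(\del A) \subset W^u \cup W^s$; equivalently, the first exit from $\Int(w)$. Such an exit exists because $w(A)$ is compact. It cannot lie on $w(B_u) \subset W^u$, since $W^u$ is an injective immersion without self-intersections, so $q_u \in [p,r]_s \cap W^u \subset \mcH$, and the crossing is transverse by hypothesis. Strong intersection of $W^u$ and $W^s$ at $x$ ensures that this first exit indeed lands on $[p,r]_s$. Symmetrically, the continuation of $W^s$ past $r$ produces $q_s \in [p,r]_u \cap W^s \subset \mcH$, and $q_u \neq q_s$ as they lie on different boundary segments.

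For the cut, let $\ga_u$ be the subarc of the continuation of $W^u$ from $r$ to $q_u$. Since $\ga_u \subset \overline{\Int(w)}$ (meeting $w(\del A)$ only at its endpoints) and is embedded in $M$, it lifts uniquely to an embedded arc $\tilde \ga_u \subset A$ starting at the concave vertex $(1,0)$ and ending at some $\tti q_u \in B_s$, splitting $A$ into two sub-polygons $A_1 \ni (-1,0)$ and $A_2$. The reflex angle at $(1,0)$ is divided by $\tilde\ga_u$ into two convex angles, and at $\tti q_u$ each side meets $B_s$ transversally with convex angle. Up to reparametrization, each $A_i$ is a di-gon, and $w|_{A_i}$ is an orientation-preserving immersion satisfying the $W^u$-$W^s$-boundary conditions; this yields $v_u := w|_{A_1} \in \mcM(p, q_u)$ and $\vhat_u := w|_{A_2} \in \mcM(q_u, r)$ with $\vhat_u \# v_u = w$. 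Maslov additivity \refmuZcomp~(1) gives $\mu(p,q_u) + \mu(q_u,r) = \mu(p,r) = 2$, and the existence of the di-gons forces $\mu(p,q_u) = 1 = \mu(q_u, r)$. Uniqueness of $q_u$ follows from uniqueness of the first exit; the symmetric construction via an analogous arc $\ga_s \subset W^s$ produces $v_s, \vhat_s$, and $q_s$. The main technical obstacle will be the careful verification that $\tilde\ga_u$ is indeed an embedded arc in $A$ and that the resulting sub-polygons $A_i$ carry the correct local convex structure at the new vertex so that $w|_{A_i}$ qualifies as an immersed di-gon in $\mcM(\cdot,\cdot)$.
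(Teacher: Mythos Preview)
Your overall strategy matches the paper's: follow the continuation of $W^u$ past the concave vertex into $\Int(w)$, locate the first exit on the stable boundary segment, and cut the domain along the lifted arc. The cutting step and the identification of the resulting pieces as elements of $\mcM(p,q_u)$ and $\mcM(q_u,r)$ are essentially the same as in the paper.

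The genuine gap is your existence argument for the exit point. You write ``Such an exit exists because $w(A)$ is compact,'' but compactness of $w(A)$ does not force a non-compact injectively immersed curve to leave: the continuation of $W^u$ past the concave vertex could in principle accumulate inside $\overline{\Int(w)}$ without ever touching $w(\partial A)$. This is precisely where the non-compactness of the Lagrangians $W^u,W^s$ makes the problem differ from the compact case. The paper supplies the missing mechanism via Palis' $\lambda$-lemma: a small neighbourhood in $W^u$ of the concave vertex meets $W^s$ transversely there, so its forward iterates contain disks $C^k$-converging to a fixed disk $D^u\subset W^u$ around $x$; combined with \refinjaroundvertex\ at the convex vertex, this forces the continuation of $W^u$ to enter the exterior wedge $W_{ext}$, hence to cross $]p,r[_s$. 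Your sentence ``Strong intersection of $W^u$ and $W^s$ at $x$ ensures that this first exit indeed lands on $[p,r]_s$'' does not do this work; in the paper, strong intersection is used only in the special case where the concave vertex equals $x$, to produce an auxiliary transverse point to which the $\lambda$-lemma can be applied. You do not treat that case at all.

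A minor slip: the complementary (exterior) wedge at the concave vertex is bounded by $[p,r]_u$ and $[p,r]_s$, not by the ``other'' local branches; the correct conclusion---that those other branches point into $\Int(w)$---still holds, but for the opposite reason.
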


Note that a symplectomorphism $\phi$ is either orientation preserving on the
stable {\it and} unstable \mf\ or orientation reversing on both.
In the first case we call $\phi$ {\em $W$-orientation preserving} and in the
latter one {\em $W$-orientation reversing}. 

Moreover recall Palis' $\lam$-lemma \cite{palis}: Given a small $\dim
W^u$-dimensional disk $D^u \subset W^u(x,\phi)$ centered around $x$, $p \in
W^s(x,\phi)$ and a $\dim W^u$-dimensional disk $D$ around $p$ intersecting
$W^s(x,\phi)$ transversely, then $\bigcup_{n\geq 0}\phi^n(D)$ contains an
$m_u$-disk arbitrarily $C^k$-close to $D^u$.

\vsp

The proof of the cutting construction differs considerably from the one for {\em
\cpt} Lagrangian sub\mf s which can be found in Chekanov \cite{chekanov}, de
Silva \cite{de silva}, Gautschi $\&$ Robbin $\&$ Salamon
\cite{gautschi-robbin-salamon} or Robbin \cite{robbin}.

\begin{proof}
Let us start with the {\it $W$-orientation preserving case} and assume $p$ to be
the concave vertex of $w \in \mcN(p,r)$, thus $w: D_b \to M$. Let w.l.o.g. $r<_i
p$ for $i \in \{s,u\}$.

Given a small disk neighbourhood $D(x)$ of $x$ in $W^u$ there is $n \in \N$ such
that $\phi^{-n}(p)$ and $\phi^{-n}(r)$ lie in $D(x)$. If we can prove the
existence of `cutting points' $q_u$ and $q_s$ for $\phi^{-n}(p)$, $\phi^{-n}(r)$
and $\phi^{-n} \circ w$ then $\phi^n(q_u)$ and $\phi^n(q_s)$ are cutting points
for $p$, $r$ and $w$.

Now choose $D(x)$ to be the `convergence disk' $D^u \subset W^u$ of the
$\lam$-lemma and assume from now on w.l.o.g. $p$, $r \in D^u$.

\vsp

In order to find the cutting point $q_u$, we follow the segment $[p, \infty[_u$.
For a certain time after $p$, it stays in the interior of $w$. We will prove
that at some point it passes $w(\del D_b)$ to the exterior of $w$ and that the
first such point will be our desired $q_u$.
We define 
\beqs
q_u:=\min \{q \in W^u \mid p<_uq, \ q \in\ ]r,p[_s , \ [q, q+\ep[_u \ \cap \
w(D_b)^c \neq \emptyset \mbox{ for } \ep>0 \}
\eeqs
where the last condition deals with the possible lack of global injectivity.
Now we prove that such a minimum always exists.
We use the notation for the branches $W^u_\pm$ and $W^s_\pm$ as sketched in
Figure \ref{qucutpoint} (a).

\begin{figure}[h]
\begin{center}

\input{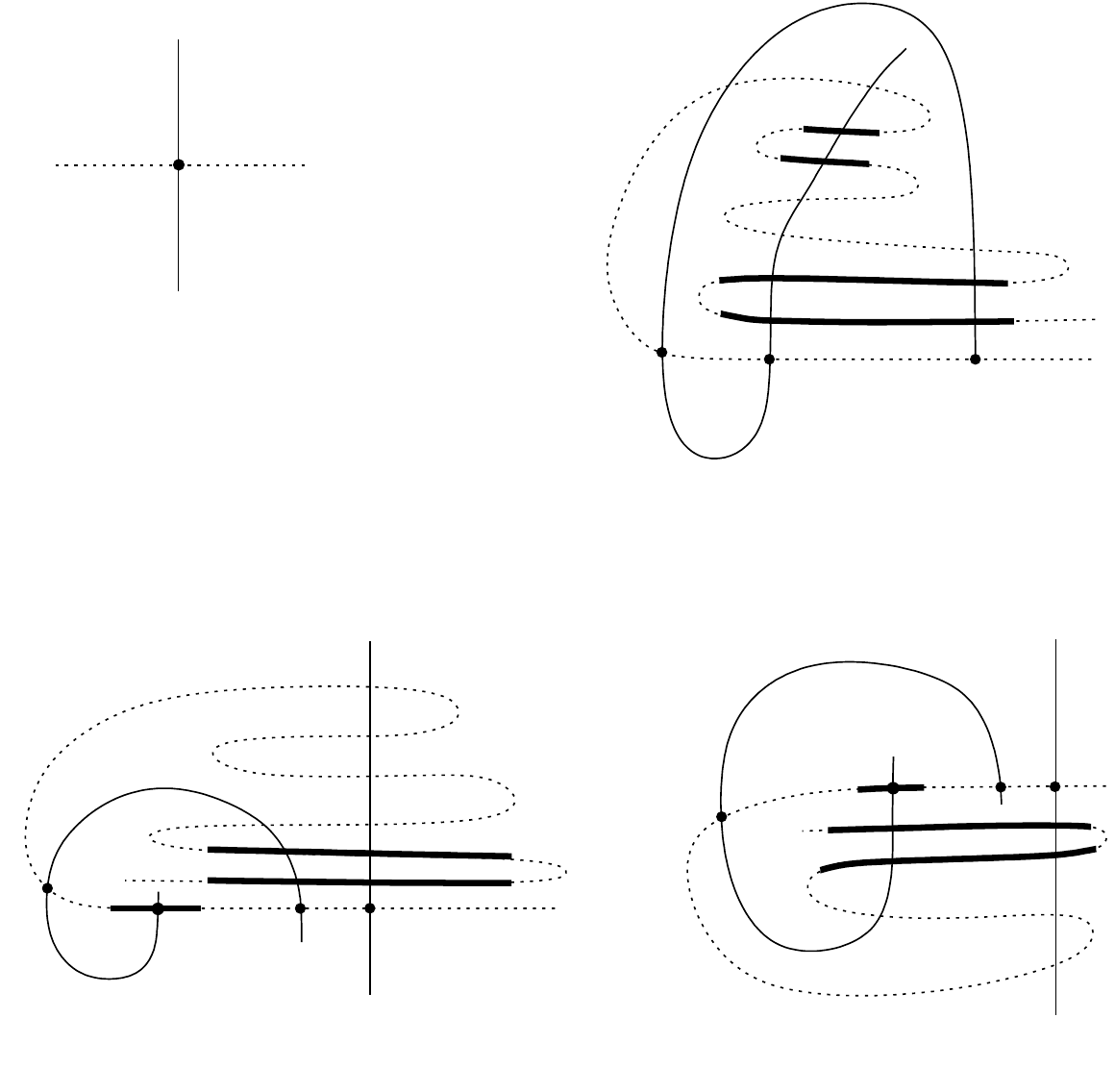_t}

\caption{Constructions for $q_u$.}
\label{qucutpoint}
\end{center}
\end{figure}

\vsp

{\it Case $p=x\neq r$}: For the relative positions of $p$ and $r$ see Figure
\ref{qucutpoint}(b).
$W^u_+$ is the branch of $W^u$ containing $r$. $W^s_+$ is the branch of $W^s$
which starts in the local picture on the same side of $W^u$ as $[r,r + \ep]_s$
for small $\ep >0 $.
Since $W^u$ and $W^s$ are strongly intersecting and transverse there is $q \in
W^u_- \inter W^s_+$ with $p<_u q$.
Let $t_i^q:=\ga^{-1}_i(q)$ and consider a small neighbourhood of $q$ in $W^u_-$.
If $\sign(\det (\gadot_u(t_u^q), \gadot_s(t_s^q)))$ is negative we denote the
neighbourhood by $U_u$ and otherwise by $V_u$. These neighbourhoods meet $W^s$
transversely in $q$ such that the $\lam$-lemma implies the $C^k$-convergence of
disks $D_n \subset \phi^{n}(U_u)$ resp.\ $D_n \subset \phi^n(V_u)$ to $D^u$ for
$n \to \infty$. Recall that $r \in D^u$ and that $[r,p]_s$ intersects $W^u$ in
$r$ transversely. Thus for given $\ep >0$, there is $n_0$ large enough such that
$D_n$ and $[r,r + \ep]_s$ intersect for $n \geq n_0$, see the extra bold long
segments in Figure \ref{qucutpoint} (b). 
\refinjaroundvertex\ states that, for $\ep>0$ small enough, the ball $B_\ep(r)$
splits into two wedge-shaped regions $W_{int} \subset \Int(w)$ and $W_{ext}
\subset \Ext(w)$ with common boundary $([r,p]_u  \cup [r,p]_s) \cap B_\ep(r)$.

Thus for $n$ large enough, $D_n$ meets $W_{ext}$ before or after passing
$]r,r+\ep[_s$ depending on if $D_n$ lies in an iterate of $U_u$ or $V_u$.
Therefore, the segment $[p,\infty [_u$ leaves $ \Int(w)$ and meets $Ext(w)$ such
that points as claimed in the definition of $q_u$ exist and so does the minimum
$q_u$.

\vsp

{\it Case $p \neq x$}. Here we do not need the (un)stable \mf s to be strongly
intersecting. The sketches in Figure \ref{qucutpoint} are schematical and it is
unimportant if $x$ lies in the exterior of $w$ or not. 
There are {\it two subcases}, namely if $p \in W^u_- \inter W^s_+$ as in (c) or
if $p \in W^u_- \inter W^s_-$ as in (d).

We start with $p \in W^u_- \inter W^s_+$ and consider a small neighbourhood
around $p$ in $W^u_-$. 
If $\sign(\det (\gadot_u(t_u^p), \gadot_s(t_s^p)))$ is negative we denote the
small neighbourhood by $U_u$ and otherwise by $V_u$. Since $p \in W^s_+$, the
disks $D_n \subset\phi^n(U_u)$ resp.\ $D_n \subset \phi^n(V_u)$ approach the
`convergence disk' $D^u$ centered at $x$ from the $W^s_+$-side for $n \to
\infty$, see the extra bold long segments in Figure \ref{qucutpoint} (c). As in
the proof of {\it case $p=x$}, we consider the special neighbourhood sectors
$W_{int}\subset \Int(w)$ and $W_{ext} \subset \Ext(w)$ of $r$ provided by
\refinjaroundvertex. We conclude that $]p, \infty[_u$ passes somewhere through
$W_{ext}$ and therefore meets the exterior of $w$. Thus points as claimed in the
definition of $q_u$ exist and so does the minimum $q_u$.

\vsp

{\it Case $p \in W^u_- \inter W^s_-$} (sketched in Figure \ref{qucutpoint} (d))
proceeds analogously to {\it case $p \in W^u_- \inter W^s_+$} except from the
following fact: Now the disks $D_n \subset\phi^n(U_u)$ resp.\ $D_n \subset
\phi^n(V_u)$ approach the `convergence disk' $D^u$  from the $W^s_-$-side for $n
\to \infty$, see the extra bold long segments in Figure \ref{qucutpoint} (d).
Therefore we have to use the sectors $W_{int}\subset \Int(w)$ and $W_{ext}
\subset \Ext(w)$ of $p$ instead of those of $r$ and then proceed as above.

\vsp

Since we only need the oscillation behaviour predicted by the $\lam$-lemma and
those special neighbourhood sectors around the vertices the proof carries over
to all possible relative positions of $x$, $p$ and $r$ within $W^u$ and $W^s$ in
case $p \neq x$.

Exchanging the roles of $W^u$ and $W^s$, the constructions for $q_s$ are similar
to those for $q_u$.
If $r$ is the concave vertex the proof proceeds similarily.

\vsp

Now we will describe the cutting procedure from $p$ to $q_u$. 
Recall from \refinjaroundvertex\ that $w$ is injective on a small neighbourhood
of $p$. If we consider $w^{-1}([p,q_u]_u)$ then there is a unique segment in
$D_b$ denoted by $I$ whose start point is $w^{-1}(p)=-1$. By definition of
$q_u$, the segment $[p, q_u+\ep]_u$ leaves $w(D_b)$ through $q_u$ for $\ep >0$.
Thus there is $\qti \in w^{-1}(q_u)$ which has to be the endpoint of $I$. In
fact, since $q_u$ lies per definitionem on a boundary segment parting the
interior from the exterior $w$ is injective in a neighbourhood of $q_u$ such
that $\{\qti\} =w^{-1}(q_u)$ is even unique.

We now cut $ D_b$ along $I$ into $ D_b^v$ and $ D_b^{\vhat}$ 
The boundary conditions of $D_b^v$ are $B_u^v= I$ and $B_s^v$ is the segment
from $-1 $ to $\qti$ in $B_s$. And for $D_b^{\vhat}$ we have $B_u^{\vhat}=I\cup
B_u$ and $B_s^{\vhat}$ is the segment from $\qti$ to $1$ in $B_s$.
Identify $D_b^v$ and $D_b^{\vhat}$ with the di-gon $D$ via $h^v: D_b^v \to D$
with $h^v(B_i^v)=B_i^D$ and $h^{\vhat}: D_b^{\vhat} \to D$ with
$h^{\vhat}(b_i^{\vhat})=B_i^D$ for $i \in \{s,u\}$ and define 
\begin{align*}
& v: D \to M, \quad  v(z):=w((h^v)^{-1}(z)), \\
& \vhat: D \to M, \quad  \vhat(z):=w((h^{\vhat})^{-1}(z)).
\end{align*}
Since our techniques considered the branches of the (un)stable \mf s separately
the $W$-orientation reversing case is reduced to the $W$-orientation preserving
case by considering the $W$-orientation preserving $\phi^2$ instead of $\phi$.
\end{proof}

\section{Primary Floer homology}

Set $p^n:= \phi^n(p)$ for $p \in \mcH$ and $n \in \Z$. Keep in mind that in this
notation $p=p^0$.

\subsection{Primary homoclinic points}

This subsection introduces a very special kind of homoclinic points, so-called
(semi-)primary points. These points are characterized by a very rigid geometric
condition. Semi-primary points are usually the first type of homoclinic points
investigated by physicists and they play a crucial role in the Melnikov method
(see for example Rom-Kedar \cite{rom-kedar1, rom-kedar2}).

\vsp

$p \in \mcH\backslash\{x\}$ is called {\em semi-primary} if $]x,p[_u \ \cap\
]x,p[_s\ = \emptyset$.
$p \in \mcH_{[x]}\backslash \{x\}$ is {\em primary} if $]x,p[_u \ \cap \
]x,p[_s\ \cap\ \mcH_{[x]}  =\emptyset $.
Nonprimary points are called {\em secondary}.
Clearly,
iterates of a (semi-)primary point are again (semi-)primary. If $W^u \cap W^s
\neq \emptyset$ then semi-primary points always exist.
We require $[p]=[x]$ in the definition of primary points, since this condition
was already necessary for the invariance of the Maslov
index and the homotopy classes under the $\Z$-action of $\phi$. The condition
`\dots $\cap\ \mcH_{[x]}$' will be necessary in the invariance discussion.

\begin{Remark}
\label{position prim}
\begin{enumerate}[(1)]
\item
Let $\phi$ be $W$-orientation preserving, $p \in \mcH$ (semi-)primary and denote
the branches containing $p$ by $W^u_p$ and $W^s_p$. 
Then for every (semi-)primary $q \in( W^u_p \cap W^s_p) \backslash \{p^n \mid n
\in \Z\}$ there is a unique $n \in \Z$ such that $q^n \in \ ]p,p^1[_u \ \cap \
]p, p^1[_s$.

If $\phi$ is $W$-orientation reversing then $p^1$ has to be replaced by $p^2$
and $n$ by $2n$.
\item
Let $p$ be semi-primary and $q$ primary within the same pair of branches. If $q
\in\ ]p,p^1[_u$ then $q \notin\ ]x,p^1[_s$. If $q \in \ ]p,p^1[_s$ then $q
\notin\ ]x, p[_u$. Moreover there is $k\in \N_0$, $n \in \Z$ such that $q \in \
]p^n,p^{n+1}[_u\ \cap\ ]p^{n+k},p^{n+k+1}[_s$. There are schematic tangles with
$k >0$.
\end{enumerate}
\end{Remark}

Now consider the universal covering $\tau: (\Mti, \omti) \to (M, \om)$ with
$\omti=\tau^*\om$.
For $\xti \in \tau^{-1}(x)$ and $i \in \{s,u\}$, denote by $\Wti^i(\xti)$ the
lift of $W^i$ passing through $\xti$.
We denote the lift of the segment $[p,q]_i$ starting in $\pti \in \tau^{-1}(p)$
and ending in $\qti \in \tau^{-1}(q)$ by $[\pti, \qti]_i$. Given $\xti_u$,
$\xti_s \in \tau^{-1}(x)$, we call $\pti \in \Wti^u(\xti_u) \cap \Wti^s(\xti_s)$
{\em homoclinic} if $\xti_u=\xti_s$ and otherwise {\em heteroclinic}.
{\em Lifting the tangle (to $\xti \in \tau^{-1}(x)$)} means that we consider the
tangle generated by $\Wti^i:=\Wti^i(\xti)$ for $i \in \{s,u\}$ on $\Mti$. With a
contractible $p \in W^u \cap W^s$, we associate $\pti \in \tau^{-1}(p)$ such
that the lift of $[p,x]_i$ starting in $\pti$ ends in $\xti$. With a
noncontractible $p$, we associate $\pti$ such that the lift of $[p,x]_u$
starting in $\pti$ ends in $\xti=\xti_u$. 
Therefore contractible homoclinic points lift to homoclinic points and
noncontractible ones to heteroclinic ones.

$\pti \in \Wti^u(\xti) \cap \Wti^s(\xti)$ is called {\em primary} if $]\pti,
\xti[_u \ \cap\ ]\pti, \xti[_s\ = \emptyset$.
For $\pti$, $\qti$, $\rti \in \Wti^u \cap \Wti^s$, we define $\mcM(\pti, \qti)$,
$\mcMhat(\pti, \qti)$, $\mcN(\pti, \rti)$ and $\mcNhat(\pti, \rti)$ in the
obvious way.

\vsp 

Now consider the tangle lifted to $\xti \in \tau^{-1}(x)$. 
$p \in W^u \cap W^s$ is primary if and only if $\pti \in \Wti^u \cap \Wti^s$ is
primary. Moreover, \refpositionprim\ holds also for the primary points in
$\Wti^u \cap \Wti^s$. The following statements are independent of the chosen
reference point $x$.

\begin{Lemma}
\label{index prim}
Let $\pti \in \Wti^u \cap \Wti^s$ be primary. Then $\mu(\pti):=\mu(\pti,\xti)
\in \{\pm 1, \pm 2, \pm 3\}$. There is either an embedded di-gon or an embedded
heart or an embedded 2-gons with two concave vertices from $\pti$ to $\xti$
(resp.\ from $\xti$ to $\pti$ depending on the sign of the index). For the
primary $p:=\tau(\pti)$ follows $\mu(p):=\mu(p,x) \in \{\pm 1, \pm 2, \pm 3\}$.
\end{Lemma}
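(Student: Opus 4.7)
The plan is to use primacy to reduce to a simple closed curve in the universal cover $\Mti \simeq \R^2$, apply the Jordan curve theorem to obtain an embedded disk, classify this disk by the convexity of its two corners, and then read off the Maslov index in each of the three resulting shapes.

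First I would observe that $\Wti^u$ and $\Wti^s$ are embedded in $\Mti$, so $[\pti,\xti]_u$ and $[\pti,\xti]_s$ are embedded arcs sharing the endpoints $\pti$ and $\xti$. Primacy reads $]\pti,\xti[_u \cap\ ]\pti,\xti[_s = \emptyset$, hence $c := [\pti,\xti]_u \cup [\pti,\xti]_s$ is a Jordan curve. In $\Mti \simeq \R^2$ it therefore bounds an embedded closed disk $\overline{D}$. Since $x$ is hyperbolic and primary points are assumed transverse, each of the two vertices of $\overline{D}$ is a genuine transverse corner whose interior angle is either $<\pi$ (convex) or $>\pi$ (concave). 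This yields exactly three possibilities: two convex corners produce an embedded di-gon; one convex and one concave produce an embedded heart; two concave corners produce an embedded 2-gon with two concave vertices. This already settles the geometric claim.

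The hardest step is determining $\mu(\pti,\xti)$ in each of the three cases. For this I would unwind the definition $\mu(\pti,\xti) = \mu(\Lam_v)$ with $v$ the constant-path homotopy supplied by $\overline{D}$: after a symplectic trivialisation of $v^*T\Mti$, the Lagrangian loop $\Lam_v$ follows $T\Wti^u$ along the $u$-boundary, $T\Wti^s$ along the $s$-boundary, and a prescribed $\tfrac{\pi}{2}$-rotation at each vertex. In dimension two this is a plane winding computation: a convex corner contributes $\pm\tfrac12$ and a concave corner contributes $\pm\tfrac32$ to the winding, the signs being fixed jointly by the orientation of $\overline{D}$ (equivalently, by whether $\pti$ is the start or the end vertex of $\Lam_v$). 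Summing the two corner contributions yields Maslov indices $\pm 1$, $\pm 2$, $\pm 3$ in the di-gon, heart and double-concave cases respectively.

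Finally, the downstairs statement follows since $\tau : \Mti \to M$ is an orientation- and symplectic-form preserving local diffeomorphism sending $[\pti,\xti]_i$ onto $[p,x]_i$ and $\overline{D}$ onto an immersed disk bounded by $[p,x]_u \cup [p,x]_s$. The trivialisation and the loop $\Lam_v$ therefore descend, and $\mu(p) = \mu(\pti) \in \{\pm 1, \pm 2, \pm 3\}$.
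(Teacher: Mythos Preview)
Your proposal is correct and follows essentially the same route as the paper: both use primacy to recognise $[\pti,\xti]_u \cup [\pti,\xti]_s$ as the boundary of an embedded two-vertex polygon in $\Mti$, classify the shape by the convex/concave nature of its corners, and read off the Maslov index as a planar winding computation. The paper phrases the last step slightly differently---after normalising the intersections to be orthogonal it asserts that $\mu$ equals twice the winding number of the tangent vector and that embeddedness forces this into $\{\pm1,\pm2,\pm3\}$---whereas you decompose the same winding into corner contributions of $\pm\tfrac12$ and $\pm\tfrac32$; these are two ways of saying the same thing.
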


\begin{proof}
Since $[\pti]=[\xti]$, the two points can be connected by a path in
$\mcP(W^u,W^s)$. Since $]\pti,x[_u \ \cap\ ]\pti,x[_s \ = \emptyset$ the region
enclosed by $[\pti, \xti]_u$ and $[\pti,\xti]_s$ is an embedded polygon with two
vertices. Assume the intersections in $\pti$ and $\xti$ to be orthogonal and
parametrize the segments $[\pti, \xti]_u$ from $\pti$ to $\xti$ and $[\pti,
\xti]_s$ from $\xti$ to $\pti$. Then the Maslov index is twice the winding
number of the tangent vector of the segments. Thus only $\mu(\pti,\xti) \in
\{\pm 1,\pm 2, \pm 3\}$ can be realized without violating the boundary condition
$]\pti,\xti[_u \ \cap\ ]\pti,\xti[_s \ = \emptyset$.    
Therefore $\mu(p)=\mu(\pti) \in \{\pm 1, \pm 2, \pm 3\}$ for $p= \tau(\pti)$.
\end{proof}

Note that the immersion between a primary $p$ and the fixed point $x$ does not
need to be globally injective since there might be noncontractible points in
$]x,p[_u \ \cap \ ]x,p[_s$.

\begin{Remark}
\label{primary max}
\label{finite primary}
\begin{enumerate}[(1)]
\item
Let $i \in \{s,u\}$, $\pti \in \Wti^u \cap \Wti^s $ be primary and
$p:=\tau(\pti)$. 
Lift $\ga_i$ to $\gati_i: \R \to \Wti^i$ and obtain the ordering $<_i$ on
$\Wti^i$.
W.l.o.g. assume $p <_i\phi(p)$; for $W$-orientation reversing $\phi$ use
$p<_i\phi^2(p)$. 
Then
\begin{align*}
 \pti _+ & :=\max \{\qti \in \Wti^s \mid \qti<_s\pti  , \ \qti \in\ ]\xti, \pti
[_u\}, \\
 \pti _- & :=\min\{\qti \in \Wti^u \mid  \pti <_u \qti, \ \qti \in \ ]\xti,\pti [_s
\}
\end{align*}
are primary and $\pti _\pm$ is called {\em adjacent} to $\pti $.
\item
Let $\pti $ be primary and $\qti =\pti _\pm$. Then $]\pti ,\qti[_u \ \cap\ ]\pti
,\qti[_s\ = \emptyset$.
If moreover $\pti $ and $\qti$ are transverse then $\mu(\pti ,\qti) \in \{ 1,
-1\}$ and there is an embedded di-gon between them.
\item
transverse primary points in $]\pti ,\pti ^1[_u \ \cap\ ]\pti ,\pti ^1[_s$.
Let $\pti $ be primary and order the primary points in $[\pti, \pti^{-1}]_u \cap
[\pti, \pti^{-1}]_s$ via $\pti $, $\pti _+$, $(\pti _+)_+, \dots, \pti ^{-1}$
and assume them transverse. Then their relative Maslov index alternates between
$+1$ and $-1$.
\item
Let all primary points $p \in W^u \cap W^s$ be transverse. Then there are modulo
$\Z$-action only finitely many primary points. The same is true for the primary
points in $\Wti^u \cap \Wti^s$.
\end{enumerate}
\end{Remark}

\subsection{Signs and coherent orientations}

Now we define the signs needed for the definition of the boundary operator of
the Floer chain complex. The signs have to satisfy a certain compatibility with
the cutting and gluing procedure which is known in classical Floer theory as
`coherent orientations'.

We will define two kinds of signs with slightly different properties depending
on the type of homoclinic points (primary or secondary) and the
symplectomorphism ($W$-orientation preserving or reversing).

\vsp

For $i \in \{s,u\}$, associate to each branch $W^i_+$ and $W^i_-$ its `jump
direction' as orientation and denote it by $o(W^i_+)$ resp.\ $o(W^i_-)$. 
Let $p$, $q$ be primary with $\mu(p,q)=1$ and $v \in \mcM(p,q)$.
Associate to $v(B_i)=[p,q]_i$ the orientation induced by the parametrization
from $p$ to $q$ and call it $o_{pq}$.
In (the proof of) \refpropclassifind, we will show that $x \notin\ ]p,q[_u\
\cap\ ]p,q[_s$. Thus, there is a branch $W_{pq} \in \{W^u_+, W^u_-, W^s_+,
W^s_-\}$ containing both $p$ and $q$.
We set
\beqs
m(p,q):= \left\{
\begin{aligned}
 1 &&& \mbox{if } \mu(p,q)=1, \ \mcM(p,q) \neq \emptyset, \ o(W_{pq})=o_{pq}, \\
 -1 &&& \mbox{if } \mu(p,q)=1, \ \mcM(p,q) \neq \emptyset, \ o(W_{pq}) \neq
o_{pq}, \\
0 &&& \mbox{otherwise}.
\end{aligned}
\right.
\eeqs
If there are two branches $W^u_{pq}$ and $W^s_{pq}$ containing $p$ and $q$ then
$p$ and $q$ are adjacent and $o(W^u_{pq})=o_{pq}=o(W^s_{pq})$, compare Figure
\ref{all immersions}. Thus $m(p,q)$ is well-defined. We do not need to
distinguish the cases $W$-orientation preserving and reversing since
$m(p,q)=m(p^l, q^l)$ for all $l \in \Z$. 
The definition does not generalize to arbitrary homoclinic points.

\begin{Lemma}
\label{sign skew sym}
Let $p$ and $r$ be primary with $\mu(p,r)=2$ and $w \in \mcNhat(p,r)$. For $i
\in \{s,u\}$ assume the existence of $q_i$ with $\mu(p,q_i)=1=\mu(q_i,r)$ and
$v_i \in \mcMhat(p,q_i)$ and $\vhat_i \in \mcMhat(q_i,r)$ such that $\vhat_i
\#v_i=w$. Then
\beqs
m(p,q_u) \cdot m(q_u,r)=-m(p,q_s) \cdot m(q_s,r).
\eeqs
\end{Lemma}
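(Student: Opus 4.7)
The approach is to compute the four signs directly by locating the cutting points $q_u$, $q_s$ on the common branches of $W^u$ and $W^s$ containing $p$ and $r$, and reading off their orderings from the construction in \refexistenceqq. Replacing $\phi$ by $\phi^2$ if necessary, we may assume $\phi$ is $W$-orientation preserving, and as in the proof of \refexistenceqq\ we take WLOG $p$ to be the concave vertex of $w$ with $r <_i p$ for $i \in \{s,u\}$; the opposite configurations are symmetric.

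First I would check that $p$, $q_u$, $q_s$, $r$ all lie on a single unstable branch $W^u_{pr}$ and a single stable branch $W^s_{pr}$. The heart's boundaries $w(B_u) = [p,r]_u$ and $w(B_s) = [p,r]_s$ are smooth immersed arcs not meeting $x$ in their interior, so they lie on single branches, defining $W^u_{pr}$ and $W^s_{pr}$. The $u$-cut $[p,q_u]_u$ is a smooth unstable arc emanating from the vertex $p$ into the interior of $w$, hence also on $W^u_{pr}$, so $q_u \in W^u_{pr}$; and by construction $q_u \in\ ]r,p[_s \subset W^s_{pr}$, so $q_u \in W^s_{pr}$ as well. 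The symmetric argument gives $q_s \in W^u_{pr} \cap W^s_{pr}$. Each of the four pairs $(p,q_u)$, $(q_u,r)$, $(p,q_s)$, $(q_s,r)$ then lies simultaneously on a common $u$-branch and a common $s$-branch, placing us in the case in which the two prescribed branch orientations give the same value of $m$.

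Next I would record the orderings. The construction of $q_u$ in \refexistenceqq\ gives $p <_u q_u$ together with $q_u \in\ ]r,p[_s$, i.e.\ $r <_s q_u <_s p$; the symmetric construction of $q_s$ gives $p <_s q_s$ together with $r <_u q_s <_u p$. Combined with $r <_i p$, this yields
\[
r <_u q_s <_u p <_u q_u \text{ on } W^u_{pr}, \qquad r <_s q_u <_s p <_s q_s \text{ on } W^s_{pr}.
\]
Taking the jump-direction orientation $o(W^u_{pr})$ to agree with $<_u$ (the opposite convention flips all four signs simultaneously and preserves the claimed identity), we read $m(a,b) = +1$ iff $a <_u b$, and therefore
\[
m(p,q_u) = +1,\quad m(q_u,r) = -1,\quad m(p,q_s) = -1,\quad m(q_s,r) = -1.
\]
Multiplying gives $m(p,q_u) \cdot m(q_u,r) = -1 = -m(p,q_s) \cdot m(q_s,r)$, as required.

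I expect the principal subtlety to lie in the branch identification above, namely that the two cutting arcs remain on the branches determined by $w(\del D_b)$. This uses the smoothness of $W^u$ and $W^s$ away from $x$ together with the local injectivity of $w$ at the concave vertex $p$ from \refinjaroundvertex; once it is in place, the sign computation is mechanical. The remaining geometric configurations from Figure \ref{(non)admissible shapes} (in particular the case where $r$ rather than $p$ is the concave vertex) follow by the symmetric roles of $p$ and $r$, yielding the same skew-symmetric product relation.
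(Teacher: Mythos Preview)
Your overall strategy---read off the signs from the $<_u$ and $<_s$ orderings coming out of the construction in \refexistenceqq---is sound, and the final arithmetic is correct. The gap is exactly where you flagged it: the branch identification.

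Your assertion that ``$w(B_u)=[p,r]_u$ and $w(B_s)=[p,r]_s$ are smooth immersed arcs not meeting $x$ in their interior'' is not justified and, in fact, fails. The point $x$ is a perfectly smooth point of $W^u$ (and of $W^s$), so smoothness of $w|_{B_u}$ says nothing about whether $x$ lies on $[p,r]_u$. If you look at the classification of hearts with primary vertices in \refpropclassifprimary\ and Figure~\ref{primary cutting}, the eight cases in which both $q_u$ and $q_s$ are primary (the left and right columns) all satisfy exactly one of
\[
]x,p[_u\ \cap\ ]x,r[_u\ \neq \emptyset\ =\ ]x,p[_s\ \cap\ ]x,r[_s
\qquad\text{or}\qquad
]x,p[_u\ \cap\ ]x,r[_u\ =\ \emptyset\ \neq\ ]x,p[_s\ \cap\ ]x,r[_s,
\]
so $x$ lies on \emph{exactly one} of $]p,r[_u$, $]p,r[_s$. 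Thus $p$ and $r$ never share both a $u$-branch and an $s$-branch, and your common $W^u_{pr}$ does not exist in half the cases. Concretely, in the second displayed case the pair $(q_u,r)$ need not lie on any common unstable branch at all, so ``$m(a,b)=+1$ iff $a<_u b$'' is not available for that pair.

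Your computation is salvageable: in the first case all four points lie on a common unstable branch and your $<_u$ argument works verbatim; in the second case all four lie on a common stable branch and the symmetric $<_s$ argument gives the same product identity (flipping all four signs simultaneously preserves the relation). But establishing which case one is in already amounts to invoking the classification, and this is precisely the route the paper takes: its proof simply points to the eight configurations in Figure~\ref{primary cutting} and checks the sign products case by case.
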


\begin{proof}
Have a look at \refpropclassifprimary\ and check in Figure \ref{primary cutting}
the eight possible $w=\vhat_i \#v_i \in \mcNhat(p,r)$ sketched in the left and
right column. This yields the claim.
\end{proof}

Whereas $m(p,q)$ is well-defined only for primary points, there is another way
to define signs for arbitrary homoclinic points: Fix an orientation $o_u$ on
$W^u$. 
Now let $p$, $q \in \mcH$ with $\mu(p,q)=1$ and provide $[p,q]_u$ with the
orientation $o_{pq}$ induced by the parametrization from $p$ to $q$. For
$W$-orientation preserving $\phi$, we define
\beqs
n(p,q):=
\left\{
\begin{aligned}
+1 & \quad \mbox{if } \mcMhat(p,q) \neq \emptyset \mbox{ and } o_{pq}=o_u,  \\
 -1 & \quad \mbox{if } \mcMhat(p,q) \neq \emptyset \mbox{ and } o_{pq} \neq o_u,
 \\
0 & \quad \mbox{if } \mcMhat(p,q)=\emptyset.
\end{aligned}
\right.
\eeqs
$n(p,q)$ clearly also could be defined using an orientation on $W^s$. For
$W$-orientation reversing $\phi$ we have to set $n_2(p,q):= n(p,q) \ mod \ 2$.
The signs depend as follows from the chosen data: Set $\mcH_{[x]}^l=\{p \in \mcH
\mid \mu(p,x)=l, [p]=[x] \}$ and provide $W^i$ with the orientation induced by
$\gadot_i$. Let $\si_{01}:=\sign(\det (\gadot_u(0), \gadot_s(0)))$ and denote
the signs defined via the orientation on $W^i$ by $n(p,q,W^i)$. Then
\begin{equation}
\label{Lsigns}
\begin{aligned}
& n(p,q,W^u)= \si_{01} n(p,q,W^s) && \mbox{for } p \in \mcH_{[x]}^{2l}, \\
& n(p,q,W^u)= - \si_{01} n(p,q,W^s) && \mbox{for } p \in \mcH_{[x]}^{2l+1}
\end{aligned}
\end{equation}
for all $ q \in \mcH_{[x]} $ and $ l \in \Z$.

\begin{Lemma}
Let $p$, $r \in \mcH$ with $\mu(p,r)=2$ and $w \in \mcN(p,r)$. For $i \in
\{s,u\}$ consider $q_i \in \mcH$ with $\mu(p,q_i)=1=\mu(q_i,r)$ and $\vhat_i \in
\mcM(p,q_i)$ and $v_i \in \mcM(q_i,r)$ such that $\vhat_i \#v_i=w$.
Then
\beqs
n(p,q_u)\cdot n(q_u,r)=- n(p,q_s) \cdot n(q_s,r)
\eeqs
and this relation also is true for $n_2$.
\end{Lemma}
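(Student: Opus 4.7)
The plan is to give a direct orientation-theoretic argument rather than to enumerate cases as in Lemma \refsignskewsym. Fix the orientation $o_u$ on $W^u$ induced by $\gadot_u$. By definition of $n$, for any $a$, $b \in \mcH$ with $\mcM(a,b) \neq \emptyset$ we have $n(a,b) = +1$ iff $a <_u b$ and $n(a,b) = -1$ iff $b <_u a$. Consequently $n(p,a)\cdot n(a,r)$ equals $+1$ precisely when $a$ lies strictly between $p$ and $r$ in the $W^u$-parameter ordering, and equals $-1$ otherwise. The claim thus reduces to showing that exactly one of $q_u$, $q_s$ lies strictly between $p$ and $r$ in the $W^u$-parameter.

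To determine the $W^u$-positions of the two cutting points I would unpack \refexistenceqq\ together with \refgluing. The cut $I \subset D_b$ produced for $q_u$ satisfies $w(I) = [p,q_u]_u$ and terminates on $B_s$ at the preimage of $q_u$. One of the two resulting di-gons, namely the one in $\mcM(q_u,r)$, inherits the concatenation $I \cup B_u$ as its $W^u$-boundary, whose image under $w$ is $[p,q_u]_u \cup [p,r]_u$, joined at the common endpoint $p$. Since the $W^u$-side of a di-gon must be a single $W^u$-arc, namely $[q_u,r]_u$, the point $p$ is forced to lie strictly between $q_u$ and $r$ in $W^u$-parameter. Hence $q_u \notin [p,r]_u$.

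For the cut at $q_s$, the symmetric construction produces a $W^s$-cut terminating on $B_u$. The $W^u$-boundaries of $\vhat_s$ and $v_s$ are then the two sub-arcs of $B_u$ obtained by splitting at the preimage of $q_s$, which forces $q_s$ to lie strictly in $]p,r[_u$. Combining yields $n(p,q_u)\cdot n(q_u,r) = -1$ and $n(p,q_s)\cdot n(q_s,r) = +1$, which is the required relation. In the $W$-orientation reversing case, $n_2$ is the mod-$2$ reduction; since $\mcM(p,q_u)$, $\mcM(q_u,r)$, $\mcM(p,q_s)$ and $\mcM(q_s,r)$ are all nonempty by hypothesis, both products reduce to $1 \in \Z/2$ and the relation holds trivially.

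The main technical point that will need care is the monotonicity of $w|_{B_u}$ onto the simple $W^u$-arc $[p,r]_u$ (and analogously for $w|_{B_s}$), which underlies the identifications above. This should follow from the winding-number framework of the preceding subsection together with the injectivity of the parametrisations $\ga_u$ and $\ga_s$, but the details need to be verified carefully.
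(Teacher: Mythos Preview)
Your argument is correct and takes a cleaner, more conceptual route than the paper. The paper's proof is a direct inspection: it points to Figure~\ref{(non)admissible shapes}, picks an orientation on $W^u$, and verifies the sign identity on each of the pictured configurations. You instead extract the geometric reason behind that case check: with the orientation on $W^u$, the product $n(p,a)\cdot n(a,r)$ records whether $a$ lies in $]p,r[_u$; then the cutting construction forces $q_u \notin\,]p,r[_u$ (because the $W^u$-cut at the concave vertex extends the heart's $W^u$-boundary beyond $[p,r]_u$, so that $[q_u,r]_u \supset [p,r]_u$ or $[p,q_u]_u \supset [p,r]_u$), while $q_s \in\,]p,r[_u$ (because the $W^s$-cut from the concave vertex can only terminate on $B_u$, as $W^s$ has no self-intersections). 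This is exactly what the figure encodes.

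Two small points. First, you only treat $w:D_b\to M$ explicitly; the $D_c$ case (concave vertex at $r$) is completely symmetric and should be mentioned. Second, the ``technical point'' you flag is not delicate: $w|_{B_u}$ is an immersion of an interval into $W^u$, and composing with the injective parametrisation $\ga_u^{-1}$ gives an immersion of an interval into $\R$, which is automatically monotone; the same holds for $w|_{B_s}$. So monotonicity is immediate and the argument closes without further work.
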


\begin{proof}
Consider Figure \ref{(non)admissible shapes}, choose an orientation on $W^u$ and
check that the claim is true. 
If we choose the other orientation on $W^u$ all signs swap and the relation
remains true.
\end{proof}

\subsection{Primary Floer homology}

Now we are ready to define the Floer chain complex. We assume from now on (if
not stated otherwise) all homoclinic points to be primary and transverse.

We define on $ \mcHpr:=\{p \in \mcH \mid p \mbox{ primary}\}$ an equivalence
relation via $ p \sim q \IFF \exists\ n \in \Z$ with $ q^n=p$. 
We set $ \mcHprti:= \mcHpr \slash _ \sim$ and denote by $\langle p \rangle$ the
equivalence class of $p$. Note that $\# \mcHprti <\infty$ according to
\refprimarymax.
Due to \refmuZcomp, we can establish a well-defined homotopy class and a Maslov
index via $[\langle p \rangle ] := [p]$, $\mu(\langle p \rangle, \langle q
\rangle):=\mu(p,q)$ and $\mu(\langle p \rangle):= \mu(p,x)$.
We define
\begin{gather*}
 \mathfrak C_m:= \mathfrak C_m(x, \phi; \Z) := \bigoplus_{\stackrel{p \in
\mcHpr}{\mu(p)=m}} \Z p, \\
 \mathfrak d_m : \mathfrak C_m \to \mathfrak C_{m-1}, \qquad
 \mathfrak d (p) = \sum_{\stackrel{q\in \mcHpr}{\mu(q)=\mu(p)-1}}m(p,q)q 
\end{gather*}
on a generator $p$ and extend $\mathfrak d$ by linearity. $\phi$ induces
$\phi_*: \mathfrak C_* \to \mathfrak C_*$ satisfying $\phi_* \circ \dd = \dd
\circ \phi_*$.
The sum is finite since $\#\mcHprti <\infty$ and, as we will see later in
\reffiniteprimiterate, $\#\{n \in \Z \mid \mcM(p,q^n) \neq \emptyset\}<\infty$.

\vsp

$\mu(p)=\mu(p^n)$ for $n \in \Z$ implies that the chain groups
have infinite rank over $\Z$.
But since $\mu(p):=\mu(p,x) \in \{\pm 1, \pm2, \pm 3\}$ for $p \in \mcHpr$ there
are at most six nonvanishing chain groups.

\begin{Theorem}
\label{delfraksquare}
$\mathfrak d \circ \mathfrak d=0$, i.e. $(\mathfrak C_*, \mathfrak d_*)$ is a
chain complex.
\end{Theorem}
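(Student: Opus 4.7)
The plan is to reduce the proof to a bijective matching between (primary, heart-shaped) objects and to use the sign–skew–symmetry lemma \refsignskewsym\ to force pairwise cancellation. I first fix a primary point $p$ and expand
\[
\mathfrak d(\mathfrak d(p)) = \sum_{r} \Bigl( \sum_{q} m(p,q)\, m(q,r) \Bigr)\, r,
\]
where the outer sum runs over primary $r$ with $\mu(p,r)=\mu(p)-2=2$ (well-defined thanks to $\refindexprim$) and the inner sum runs over primary $q$ with $\mu(p,q)=1=\mu(q,r)$. Fixing such an $r$, it suffices to prove that for every such pair $(p,r)$ the coefficient $N(p,r):= \sum_{q} m(p,q)\, m(q,r)$ vanishes. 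Note that the coefficient is a finite sum: by \refprimarymax\ (finiteness of primary orbits) and the forthcoming finiteness statement \reffiniteprimiterate\ each class contributes only finitely many representatives with nontrivial $\mcM(p,q) \neq \emptyset$ or $\mcM(q,r) \neq \emptyset$.

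Next I read the inner sum as counting pairs of unparametrized di-gons. A summand is nonzero precisely when $\mcMhat(p,q)$ and $\mcMhat(q,r)$ are both nonempty, in which case each is a singleton (as noted just after the definition of $\mcMhat$, since $\pi_2(M)=0$ and the segments are unique). Consequently
\[
N(p,r) = \sum_{(q,v,\vhat)} m(p,q)\, m(q,r),
\]
with the sum running over triples where $v \in \mcMhat(p,q)$ and $\vhat \in \mcMhat(q,r)$. The gluing theorem $\refgluing$ assigns to each such triple an unparametrized immersed heart $w = \vhat \# v \in \mcNhat(p,r)$. Conversely, the cutting theorem $\refexistenceqq$ produces from each $w \in \mcNhat(p,r)$ precisely two cutting points $q_u$ and $q_s$, distinct and uniquely determined by $w$, together with the corresponding factorizations $w = \vhat_i \# v_i$ for $i\in\{s,u\}$. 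Hence gluing and cutting set up a $2$-to-$1$ correspondence between triples $(q,v,\vhat)$ and hearts $w$, in which each $w$ is hit by exactly the two triples coming from $q_u$ and $q_s$.

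Before applying this to $N(p,r)$ I need to know that the cutting points $q_u,q_s$ are themselves primary, so that the signs $m(p,q_i)$ and $m(q_i,r)$ are defined in the first place (recall $m$ is only defined for primary arguments). This is the point where I would invoke the classification of primary cuts \refpropclassifprimary, whose statement is precisely that for a heart between primary points the cutting points are again primary. Granting this, grouping the summands of $N(p,r)$ according to the associated heart $w$ yields
\[
N(p,r) \;=\; \sum_{w \in \mcNhat(p,r)} \Bigl( m(p,q_u)\, m(q_u,r) \;+\; m(p,q_s)\, m(q_s,r) \Bigr),
\]
and each summand vanishes by \refsignskewsym. Therefore $N(p,r)=0$ for every admissible pair $(p,r)$, which gives $\mathfrak d \circ \mathfrak d = 0$.

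The main obstacle is not the formal gluing/cutting matching, but rather ensuring that all ingredients remain within the primary world: one must verify that every $q$ contributing to the inner sum is automatically one of the two cutting points of the heart $w = \vhat\#v$ (so that no triple is lost), and dually that the cutting points of every heart in $\mcNhat(p,r)$ are primary (so that $m$ is defined on them and \refsignskewsym\ applies). Both facts are furnished by \refpropclassifprimary, whose geometric classification of the admissible shapes of hearts between primary points identifies the cutting points with the adjacent primaries described in \refprimarymax; once this identification is in hand, the cancellation is automatic.
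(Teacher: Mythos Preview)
Your approach is the same as the paper's: expand $\mathfrak d^2(p)$, fix $r$, and show the coefficient $N(p,r)$ vanishes by pairing terms via gluing/cutting and invoking \refsignskewsym. There is one factual slip, however. You claim that \refpropclassifprimary\ says that for a heart between primary $p$ and $r$ the cutting points are again primary. That is not what it says: the correct statement (made explicit in \refprimcutth) is that \emph{either both} cutting points are primary \emph{or neither} is. The second alternative actually occurs, namely in the case $(\mu(p,x),\mu(x,r))=(1,1)$, where one cutting point is $x$ itself and the other is secondary (middle column of Figure~\ref{primary cutting}). As written, your displayed identity
\[
N(p,r)=\sum_{w\in\mcNhat(p,r)}\bigl(m(p,q_u)m(q_u,r)+m(p,q_s)m(q_s,r)\bigr)
\]
is therefore ill-posed, since $m(\cdot,\cdot)$ is only defined for primary arguments.

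The fix is what the paper does: do not sum over all hearts. Start from a nonzero summand $m(p,q)m(q,r)$; gluing yields a heart $w$, and since the primary $q$ is one of its cutting points, the ``both or neither'' dichotomy of \refprimcutth\ forces the other cutting point to be primary as well. This pairs the nonzero summands two by two, and \refsignskewsym\ gives the cancellation. Hearts in $\mcNhat(p,r)$ with nonprimary cutting points simply never arise from a primary $q$ and are irrelevant to $N(p,r)$. With this correction your argument coincides with the paper's.
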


The proof of \refdelfraksquare\ is postponed to the following subsections. The
homology of $(\mathfrak C_*, \mathfrak d)$ is
\beqs
\mathfrak H_m:= \mathfrak H_m(x,\phi; \Z):= \frac{\ker \mathfrak d_m} {\Img
\mathfrak d_{m+1}}.
\eeqs

Since the chain groups have infinite rank over $\Z$ this might also be the case
for
the homology groups. In order to enforce finite rank, we will divide by the
$\Z$-action:
For $\langle p
\rangle $, $\langle q \rangle \in \mcHprti$ set
$m(\langle p \rangle, \langle q \rangle):=\sum_{n \in \Z} m(p,q^n)$ 
and define
\begin{gather*}
 C_m:=C_m(x,\phi; \Z):= \bigoplus_{\stackrel{\langle p \rangle \in
\mcHprti}{\mu(\langle p
 \rangle)=m}} \Z \langle p \rangle,\\
 \del_m : C_m \to C_{m-1},
\qquad \del \langle p \rangle := \sum_{\stackrel{\langle q \rangle \in
    \mcHprti}{\mu(\langle q \rangle)=\mu(\langle p \rangle) -1}} m(\langle p
\rangle , \langle q \rangle) \langle q \rangle
\end{gather*}
on a generator $\langle p \rangle$ and extend $\del$ by linearity.
The compatibility of the $\Z$-action with the Maslov index and the homotopy
classes yields the well-definedness of $\del$.

\vsp

We have $\rk_\Z(C_m)=\#\{\langle p \rangle \in \mcHprti \mid \mu(\langle p
\rangle)=m\}< \infty$. 
And due to \refindexprim, at most $C_{\pm 1}$, $C_{\pm 2}$ and $C_{\pm 3}$ are
nonzero. Moreover, \reffiniteprimary\ implies $\rk_\Z C_{\pm 2}= \rk_\Z C_{\pm
1} + \rk_\Z C_{\pm 3}$.

If we generalize the notion of equivalence classes to finite sums via $\langle
p + q \rangle= \langle p \rangle + \langle q \rangle$ the differential can also
be written as 
\beqs
\del \langle p \rangle = \langle \mathfrak d p \rangle =
\sum_{\stackrel{q\in \mcHpr}{\mu(q)=\mu(p)-1}}m(p,q)\langle q
\rangle.
\eeqs
Therefore $\mathfrak d ^2=0$ immediately implies

\begin{Theorem}
$\del \circ \del=0$, i.e. $(C_*, \del_*)$ is a chain complex.
\end{Theorem}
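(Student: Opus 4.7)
The plan is to derive $\del \circ \del = 0$ directly from the already-established identity $\mathfrak d \circ \mathfrak d = 0$ (Theorem \ref{delfraksquare}) by passing to the $\Z$-quotient. The bridge is the formula displayed just before the statement, namely $\del \langle p \rangle = \langle \mathfrak d p \rangle$, once the generalized equivalence-class notation $\langle p+q \rangle = \langle p \rangle + \langle q \rangle$ is in force.

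First, I would check that $\del$ is well-defined as a map on the quotient complex. For this I need two ingredients: (i) the $\Z$-invariance $m(p,q) = m(p^l, q^l)$ for all $l \in \Z$, already noted in the definition of $m$, together with the $\Z$-invariance of the Maslov index $\mu(p,q) = \mu(p^l, q^l)$ from \refmuZcomp; and (ii) the finiteness of the sum $m(\langle p \rangle, \langle q \rangle) = \sum_{n \in \Z} m(p, q^n)$, which uses $\#\{n \in \Z \mid \mcM(p,q^n) \neq \emptyset\} < \infty$ (to be established in \reffiniteprimiterate) together with the finiteness of $\mcHprti$ from \refprimarymax. Consequently the induced operator $\phi_*$ on $\mathfrak C_*$ commutes with $\mathfrak d$, so $\mathfrak d$ descends to the quotient $C_* = \mathfrak C_* / (\phi_* - \mathrm{Id})$, and the descended operator coincides with $\del$.

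Next, I would verify the key identity $\del \langle p \rangle = \langle \mathfrak d p \rangle$ by expanding both sides. By definition
\begin{equation*}
\langle \mathfrak d p \rangle = \Big\langle \sum_{\mu(q) = \mu(p)-1} m(p,q)\, q \Big\rangle = \sum_{\langle q \rangle} \Big( \sum_{n \in \Z} m(p, q^n) \Big) \langle q \rangle = \sum_{\langle q \rangle} m(\langle p \rangle, \langle q \rangle) \langle q \rangle = \del \langle p \rangle,
\end{equation*}
where the grouping of the (finite) sum according to equivalence classes uses the $\Z$-invariance of the grading. With this identity in place, the conclusion is immediate on generators:
\begin{equation*}
\del \circ \del \, \langle p \rangle = \del \langle \mathfrak d p \rangle = \langle \mathfrak d \circ \mathfrak d \, p \rangle = \langle 0 \rangle = 0,
\end{equation*}
and $\Z$-linear extension finishes the proof.

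The only real content not already carried by \refdelfraksquare\ is the well-definedness of $\del$ on the quotient, and the main (minor) obstacle is bookkeeping: one must make sure that the sum defining $m(\langle p \rangle, \langle q \rangle)$ is finite, that it is independent of the representative chosen for $\langle p \rangle$ and $\langle q \rangle$, and that regrouping the sum $\sum_{q} m(p,q)\, q$ by $\Z$-orbits produces exactly $\sum_{\langle q \rangle} m(\langle p \rangle, \langle q \rangle)\, \langle q \rangle$. All of these are direct consequences of \refprimarymax, \reffiniteprimiterate\ and \refmuZcomp; no new geometric argument is needed beyond what has been used to prove $\mathfrak d^2 = 0$.
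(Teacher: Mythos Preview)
Your proof is correct and follows exactly the paper's approach: the paper simply notes that $\del \langle p \rangle = \langle \mathfrak d p \rangle$ and concludes that $\mathfrak d^2 = 0$ immediately implies $\del^2 = 0$. Your write-up is in fact more detailed than the paper's one-line deduction, but the argument is the same.
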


We define the {\em primary Floer homology of $\phi$ in $x$} as
\begin{equation}
\label{phfh}
H_m:= H_m(x, \phi; \Z):= \frac{\ker \del_m}{\Img \del_{m+1}}.
\end{equation}

Since already the $C_m$ have finite rank over $\Z$ so has $H_m$. All chain
groups $C_m$ and homology groups $H_{m}$ with $m \neq \pm 1, \pm 2, \pm 3$
vanish.

\vsp

\subsubsection*{Homology and Cohomology}
The question about cohomology instead of homology leads in our situation to the
choice between $\phi$ and $\phi^{-1}$ as underlying symplectomorphism. More
precisely, $H_*(x,\phi)$ is related to $H_*(x,\phi^{-1})$ in the following way.
Consider
\beqs
C^m(x,\phi;\Z):=\bigoplus_{\stackrel{ \langle p \rangle \in
\mcHprti}{\mu(\langle p \rangle )=m}}\Z \langle p \rangle
\eeqs
with differential $\de : C^m(x,\phi; \Z) \to C^{m+1}(x, \phi; \Z)$ defined on
the generators by
\beqs
\de (\langle p \rangle) := \sum_{\stackrel{  q \in \mcHpr}{\mu( q 
)=m+1}}m(q,p)\langle q \rangle.
\eeqs
Then $\de \circ \de=0$ is proven analogously to $\del \circ \del=0$ and 
\beqs
H^*(x,\phi; \Z):= \frac{\ker \de}{\Img \de}
\eeqs 
is called {\em primary Floer cohomology of $\phi$ in $x$}.
Changing from $\phi$ to $\phi^{-1}$ transforms $W^u$ into $W^s$ and vice versa,
but apart from this the homoclinic tangle remains untouched. Therefore the sign
of the Maslov index of a homoclinic point $p=p_\phi$ in the tangle generated by
$\phi$ changes, when considered as homoclinic point $p=p_{\phi^{-1}}$ in the
tangle corresponding to $\phi^{-1}$, i.e. $\mu(p_\phi)=-\mu(p_{\phi^{-1}})$.
This implies

\begin{Theorem}
\label{hom cohom}
$H^*(x,\phi)=H_{-*}(x,\phi^{-1})$.
\end{Theorem}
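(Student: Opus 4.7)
The plan is to exhibit an explicit isomorphism between the cochain complex $(C^*(x,\phi), \de)$ and the chain complex $(C_{-*}(x,\phi^{-1}), \partial)$ at the level of generators and differentials. The key structural observation is that passing from $\phi$ to $\phi^{-1}$ swaps stable and unstable manifolds as subsets of $M$: one has $W^u(x,\phi)=W^s(x,\phi^{-1})$ and $W^s(x,\phi)=W^u(x,\phi^{-1})$, since $\phi^{-n}(p)\to x$ exactly when $\phi^n(p)\to x$ under the reversed dynamics. Consequently the set $\mcH$, the contractibility condition defining $\mcH_{[x]}$, and the primary condition (which is manifestly symmetric in the roles of $s$ and $u$) all coincide for the two systems, giving a canonical identification $\mcHprti(x,\phi)\leftrightarrow\mcHprti(x,\phi^{-1})$ of the underlying generating sets.

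Next I would verify the Maslov index sign flip $\mu_\phi(p,q)=-\mu_{\phi^{-1}}(p,q)$. In the construction of $\mu$ via the loop $\Lambda_v$, swapping $W^u$ and $W^s$ interchanges the two horizontal sides of the boundary loop and interchanges the rotations $e^{i\pi\eta/2}$ at the vertices with the $e^{i\pi(\eta-1)/2}$ factors; composed with the resulting reversal of orientation of $\del[0,1]^2$, this multiplies the mapping degree of $\rho\circ\Lambda_v$ by $-1$. Therefore $\mu_\phi(\langle p\rangle)=m$ if and only if $\mu_{\phi^{-1}}(\langle p\rangle)=-m$, so $C^m(x,\phi)$ and $C_{-m}(x,\phi^{-1})$ agree as graded abelian groups.

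To match differentials, I would construct a bijection $\mcM_\phi(p,q)\leftrightarrow\mcM_{\phi^{-1}}(q,p)$ by reflecting a di-gon $v:D\to M$ through the vertical axis, i.e.\ precomposing with $(s,t)\mapsto(-s,-t)$. This swaps the two vertices, exchanges $B_u$ and $B_s$ (which is correct, since $W^u$ of $\phi$ becomes $W^s$ of $\phi^{-1}$), and preserves orientation; moreover $\mu_\phi(p,q)=1$ is precisely $\mu_{\phi^{-1}}(q,p)=1$, so the Maslov condition for a di-gon is respected. Applying this to the gluing/cutting constructions of \refgluing\ and \refexistenceqq\ shows the immersion counts are compatible with the reversal of arrows.

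The main obstacle will be checking that the signs $m(p,q)$ transform correctly. The coefficient $m_\phi(p,q)$ compares the parametrization orientation $o_{pq}$ on the common branch $W_{pq}$ with the jump direction of that branch under $\phi$, and the jump direction on a given branch is reversed upon passing to $\phi^{-1}$; on the other hand, the reflection of the di-gon reverses the parametrization from $p$ to $q$ into one from $q$ to $p$, so $o_{qp}=-o_{pq}$. The two sign reversals should cancel, producing $m_\phi(p,q)=m_{\phi^{-1}}(q,p)$, and this identity is exactly what is needed so that $\de^m\colon C^m(x,\phi)\to C^{m+1}(x,\phi)$, which sums $m_\phi(q,p)\langle q\rangle$ over primary $q$ with $\mu_\phi(q)=m+1$, is carried term-by-term to $\partial_{-m}\colon C_{-m}(x,\phi^{-1})\to C_{-m-1}(x,\phi^{-1})$, which sums $m_{\phi^{-1}}(p,q)\langle q\rangle$ over primary $q$ with $\mu_{\phi^{-1}}(q)=-m-1$. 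Once this sign matching is verified (case-by-case on which pair of branches contains $p$ and $q$, using \refpositionprim), the induced map on homology gives the claimed identification $H^m(x,\phi)\simeq H_{-m}(x,\phi^{-1})$.
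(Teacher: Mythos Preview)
Your proposal is correct and follows essentially the same approach as the paper: the paper's argument consists solely of the paragraph preceding the theorem, observing that passing from $\phi$ to $\phi^{-1}$ swaps $W^u$ and $W^s$ while leaving the tangle unchanged, which flips the sign of the Maslov grading, and then states that the theorem follows. You have simply supplied the details the paper leaves implicit---the explicit di-gon bijection via the $180^\circ$ rotation of $D$ and the verification that the $m(\cdot,\cdot)$ signs match under the double reversal of jump direction and parametrization.
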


\subsubsection*{The signs $n(p,q)$}

The above chain complexes and homologies can be defined analogously with
$n(p,q)$ resp.\ $n_2(p,q)$ (and $\Z \slash 2\Z$-coefficients in the latter case).
We will see that $\mathfrak H_m$ and $H_m$ do not depend on the chosen data:
Let $\phi$ be $W$-orientation preserving. Changing the
orientation of $W^u$ changes the sign of the $n(p,q)$. Thus $\mathfrak d$
transforms into $- \mathfrak d$ which has the same kernel and image as
$\mathfrak d$. 
\refLsigns\ implies that the differential obtained by using an orientation on
$W^s$ instead of $W^u$ equals for fixed Maslov index $\pm 1$ times the
$W^u$-induced differential. Thus $\ker \dd^{W^u}_k =\ker \dd^{W^s}_k$ and $\Img
\dd^{W^u}_k = \Img \dd^{W^s}_k$ for all $k$ such that the homologies coincide.
$H_m$ does not depend on the choice of the orientation for the same reasons as
$\mathfrak H_m$.

If $\phi$ is $W$-orientation reversing we have to use $\Z \slash 2
\Z$-coefficients $n_2(p,q)$ if we want to be able to divide by the $\Z$-action.

\vsp

If one computes the examples in \refchapterexamples\ with $n(p,q)$ instead with
$m(p,q)$ one obtains isomorphic homologies, but the generators of the homology
groups differ.

\subsection{Well-definedness, gluing and cutting}


In this subsection, we will prove \refdelfraksquare. The proof is mainly based
on classifications of immersions of relative Maslov index $1$ and $2$.

\vsp

Lift the homoclinic tangle w.r.t. $\xti \in \tau^{-1}(x)$. Given primary $p$,
$q\in W^u \cap W^s$ with associated primary $\pti$, $\qti \in \Wti^u \cap
\Wti^s$, the immersions in $\mcM(p,q)$ resp.\ $\mcN(p,q)$ lift exactly to the
immersions in $\mcM(\pti, \qti)$ resp.\ $\mcN(\pti, \qti)$. 
Primary Floer (co)homology is well-defined for $(\phi, x)$ on $M$ if and only if
it is well-defined for the lifted homoclinic tangle generated by $\Wti^u$ and
$\Wti^s$ on $\Mti$. 
Thus it is enough to prove the primary cutting and gluing procedure for the
lifted tangle $\Wti^u \cap \Wti^s$ on $\Mti$.

\begin{Proposition}[Classification for index difference 1]
\label{prop classif ind 1}
Let $p$, $q \in \mcH$ be primary with $\mu(p,q)=1$ and let $\pti$ and $\qti$ the
associated primary points in $\Wti^u \cap \Wti^s$. Then either
$\mcM(\pti,\qti)=\emptyset$ or $v \in \mcM(\pti,\qti)$ is, in fact, an
embedding.
\end{Proposition}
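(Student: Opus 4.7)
The plan is to work in the universal cover and invoke the identification of $\mcM(p,q)$ with $\mcM(\pti,\qti)$ noted just before the proposition, reducing the claim to showing that any $v \in \mcM(\pti,\qti)$ is an embedding. My strategy is to first establish that $v(\del D)$ is a Jordan curve in $\Mti \simeq \R^2$, and then exploit the winding number identity $\Ind_v(z) = \#v^{-1}(z)$ together with the orientation-preserving property to conclude that $v$ is a diffeomorphism onto the closure of the bounded complementary region.

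The two boundary arcs $v(B_u) = [\pti,\qti]_u$ and $v(B_s) = [\pti,\qti]_s$ are individually embedded because $\Wti^u$ and $\Wti^s$ are injectively immersed, so a failure of $v(\del D)$ to be Jordan amounts to the existence of an interior intersection $z \in\ ]\pti,\qti[_u\ \cap\ ]\pti,\qti[_s$. I plan to exclude such $z$ by a case analysis on the location of $\xti$ along $\Wti^u$ and $\Wti^s$. In the parallel configurations, where $\xti$ lies on the same side (both $\pti$-side or both $\qti$-side) of $[\pti,\qti]_u$ and of $[\pti,\qti]_s$, the point $z$ would fall into either $\ ]\pti,\xti[_u\ \cap\ ]\pti,\xti[_s$ or $\ ]\qti,\xti[_u\ \cap\ ]\qti,\xti[_s$, directly contradicting the primarity of $\pti$ or of $\qti$ respectively. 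When $\xti$ lies in the interior of one of the segments, I would split that segment at $\xti$ and reduce to a parallel case.

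The delicate situation is the mixed configuration, where $\xti$ lies on the $\pti$-side of one segment and on the $\qti$-side of the other, so that $z$ sits in $\ ]\qti,\xti[_u\ \cap\ ]\pti,\xti[_s$ (up to symmetry). Here primarity yields no direct obstruction, and the hypothesis $\mu(\pti,\qti)=1$ has to be used: cutting the hypothetical immersion along a small arc through $z$ produces two sub-polygons whose relative Maslov indices add to $1$ by the additivity of \refmuZcomp, while the alternation of Maslov indices among primary points from \refprimarymax\ combined with the identification in \refindexprim\ of $\mu$ with (twice) the turning number of the boundary tangent rules out any integer distribution of indices over the two pieces compatible with the primarity of both vertices.

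Once $v(\del D)$ is a Jordan curve in $\Mti$ it bounds a topological disk $\Delta$, and because $v$ is an orientation-preserving disk immersion whose boundary traverses $\del \Delta$ exactly once, the winding number $\Ind_v$ equals $+1$ on $\Int \Delta$ and $0$ outside; hence $\#v^{-1}(z) = 1$ for every $z \in \Int \Delta$, and together with injectivity on $\del D$ this forces $v$ to be globally injective, i.e.\ an embedding. The main obstacle is the mixed case above, where turning $\mu(\pti,\qti)=1$ into a genuine combinatorial restriction not already implied by primarity is the crux of the argument.
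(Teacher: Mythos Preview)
Your overall strategy (reduce to showing $v(\partial D)$ is a Jordan curve and then invoke the winding-number identity) is sound, and the ``parallel'' cases do collapse to primarity exactly as you say. But the approach diverges substantially from the paper's, and the part you correctly flag as delicate is a genuine gap.

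The paper does not try to exclude interior intersections $z$ directly. Instead it uses the additivity $\mu(\pti,\qti)=\mu(\pti,\xti)+\mu(\xti,\qti)=1$ together with the constraint $\mu(\cdot,\xti)\in\{\pm1,\pm2,\pm3\}$ from \refindexprim\ to reduce to the four pairs $(\mu(\pti,\xti),\mu(\xti,\qti))\in\{(-2,3),(-1,2),(2,-1),(3,-2)\}$. Since \refindexprim\ also says that each of $[\pti,\xti]_u\cup[\pti,\xti]_s$ and $[\qti,\xti]_u\cup[\qti,\xti]_s$ bounds an \emph{embedded} polygon of a prescribed shape (di-gon, heart, or concave 2-gon), the geometry is rigid enough that one can simply enumerate how these two embedded regions can sit relative to each other under each of the four index pairs and the possible containments $]\xti,\pti[_i\subset]\xti,\qti[_i$ or vice versa. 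The upshot is a finite list of pictures (Figure~\ref{all immersions}), all of which are visibly embeddings.

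Your mixed-case argument does not close. The point $z$ you produce need not be primary, so neither the alternation statement of \refprimarymax\ nor the range restriction of \refindexprim\ applies to it; and ``cutting the hypothetical immersion along a small arc through $z$'' does not obviously yield two immersed sub-polygons to which you can assign Maslov indices summing to $1$ in a way that is constrained by primarity of $\pti$ and $\qti$. More concretely: in the mixed configuration one has $z\in\,]\xti,\qti[_u$ but $z\notin\,]\xti,\qti[_s$, and $z\in\,]\xti,\pti[_s$ but $z\notin\,]\xti,\pti[_u$, so no primarity hypothesis bites, and nothing you have written actually pins down $\mu(\pti,z)$ or $\mu(z,\qti)$. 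The same difficulty infects your ``split at $\xti$ and reduce to parallel'' step: if $\xti\in\,]\pti,\qti[_u$ and $z$ lies on the $\pti$-side of $\xti$ in $W^u$, the reduction fails for the same reason. Finally, you omit the case $\xti\in\,]\pti,\qti[_u\cap\,]\pti,\qti[_s$ entirely; there the boundary is certainly not Jordan, and one must argue that $\mcM(\pti,\qti)=\emptyset$. The paper's index decomposition handles all of these uniformly because the embedded shapes attached to $\pti$ and $\qti$ by \refindexprim\ leave no freedom once $(\mu(\pti,\xti),\mu(\xti,\qti))$ is fixed.
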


The elements of $\mcM(p,q)$ do not need to be embeddings. Nor is it true for
noncontractible semi-primary points.

\begin{proof}
In the following, we work with the lifted tangle on $\Mti$. We drop the tilde
associated to symbols on $\Mti$. Thus identify $p=\pti$ and $q=\qti$ etc.

The proof is tedious, but simple. $[p]=[q]=[x]$ allows us to write
$1=\mu(p,q)=\mu(p,x)+\mu(x,q)$ and \refindexprim\ provides the four cases
$(\mu(p,x), \mu(x,q)) \in \{(-2,3), (-1, 2), (2,-1), (3,-2)\}$. Since there are
always two possibilities to place the concave vertex of a standard heart the
number of cases multiplies by two. Moreover, we have to distinguish $]x,p[_i \
\cap \ ]x,q[_i \ = \emptyset$ or $\neq \emptyset$ for $i \in \{s,u\}$. 
Since $W^i$ is self-intersection free and one-dimensional we conclude in case
$]x,p[_i\ \cap\ ]x,q[_i \ \neq \emptyset$ either $[x,p]_i \subset [x,q]_i$ or
$[x,q]_i \subset [x,p]_i$.
This yields a lot of cases, but fortunately some of them are symmetric.
We recall from \refindexprim\ that there is modulo parametrization exactly one
embedding between $p$ and $x$ and $q$ and $x$. Since embeddings are by
definition bijective there is --- together with the boundary conditions ---
almost no degree of freedom in sketching them. Figure \ref{all immersions} lists
all possibly arising immersions.
\begin{figure}
\begin{center}
\input{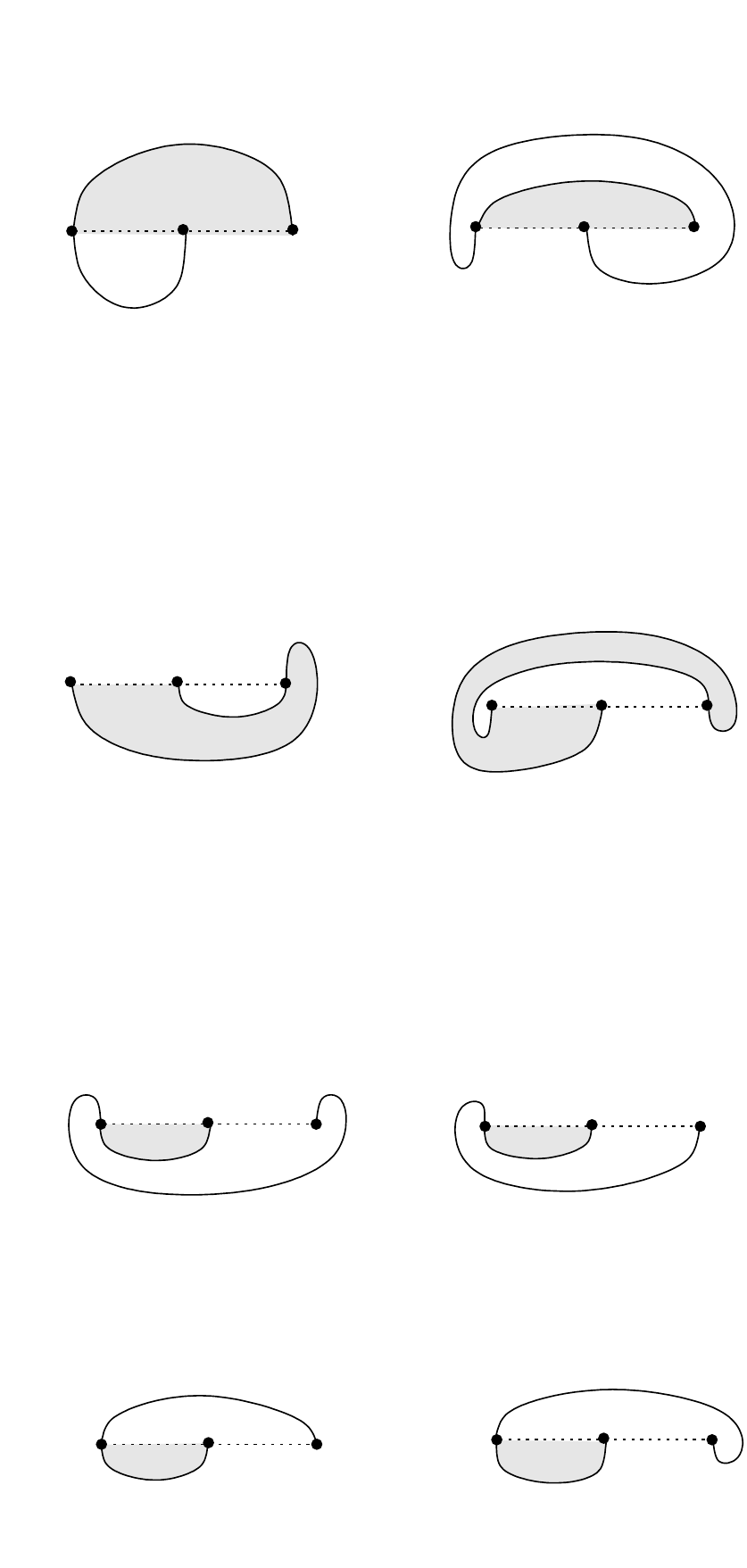_t}
\caption{Immersions of relative index 1 up to obvious symmetries.}
\label{all immersions}
\end{center}
\end{figure}
\end{proof}

Since, according to \refpropclassifind, immersions between primary homoclinic
points $\pti$ and $\qti$ of $\Wti^u \cap \Wti^s$ are in fact embeddings it is
enough to show $]\pti,\qti[_u \ \cap \ ]\pti,\qti[_s \ \neq \emptyset$ to
prevent their existence:

\begin{Lemma}
\label{always intersections}
Let $p$, $q \in \mcH_{[x]}\backslash\{x\}$ and $p^n:=\phi^n(p)$ etc. for $n \in
\Z$. Let $\pti$, $\qti$ and $\pti^n$ etc. be the associated points in $\Wti^u
\cap \Wti^s$. Then there is $N \in \N_0$ such that for $n \in \Z$ with
$\betrag{n} \geq N$ we have $]\pti,\qti^n[_u \ \cap \ ]\pti,\qti^n[_s \ \neq
\emptyset$.
\end{Lemma}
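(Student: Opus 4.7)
The approach is to transfer the problem via the $\phi$-equivariance of the lifting convention in the universal cover to a statement about a sequence approaching the fixed point, then exhibit an explicit intersection point using hyperbolic dynamics. Since the lift $\phiti:\Mti\to\Mti$ fixing $\xti$ satisfies $\phiti^n(\pti)=\pti^n$ and $\phiti^n(\qti)=\qti^n$, one has $\phiti^n\bigl([\pti^{-n},\qti]_i\bigr)=[\pti,\qti^n]_i$ for $i\in\{u,s\}$. It therefore suffices to show, for $n\to+\infty$, that $]\pti^{-n},\qti[_u\,\cap\,]\pti^{-n},\qti[_s\neq\emptyset$ for all $n$ sufficiently large; the case $n\to-\infty$ follows by the symmetric argument with $\phi$ and $\phi^{-1}$ interchanged.

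By hyperbolicity of $x$, as $n\to+\infty$ the point $\pti^{-n}$ converges to $\xti$ in $\Mti$; its $\Wti^u$-parameter $t_u^{\pti^{-n}}$ tends to $0$ (since $\phi^{-1}$ contracts $W^u$), while its $\Wti^s$-parameter $|t_s^{\pti^{-n}}|$ tends to $\infty$ (since $\phi^{-1}$ expands $W^s$). Consequently $[\pti^{-n},\qti]_u$ becomes a $C^0$-small perturbation of $[\xti,\qti]_u$, whereas $[\pti^{-n},\qti]_s$ is an arc of $\Wti^s$ of unbounded $\Wti^s$-parameter length. My candidate for the interior intersection is the fixed iterate $\qti^{-1}=\phiti^{-1}(\qti)\in\Wti^u\cap\Wti^s$: on $\Wti^u$, contraction of $\phi^{-1}$ gives $|t_u^{\qti^{-1}}|<|t_u^\qti|$ on the same branch as $\qti$, so for $n$ large $|t_u^{\pti^{-n}}|$ lies below $|t_u^{\qti^{-1}}|$, placing $\qti^{-1}$ in the open segment $]\pti^{-n},\qti[_u$; on $\Wti^s$, expansion of $\phi^{-1}$ gives $|t_s^{\qti^{-1}}|>|t_s^\qti|$, and the unbounded growth of $|t_s^{\pti^{-n}}|$ places $\qti^{-1}$ strictly between $\qti$ and $\pti^{-n}$ for $n$ large, i.e., in $]\pti^{-n},\qti[_s$.

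The main obstacle will be the branch case analysis, depending on the signs of the eigenvalues of $d\phi(x)$ and on which branches of $\Wti^u$ and $\Wti^s$ contain $\pti^{-n}$ and $\qti$. When the eigenvalues are negative the iterates alternate branches, so one replaces $\qti^{-1}$ by $\qti^{-2}$; when $\pti^{-n}$ accumulates on the opposite $\Wti^u$-branch from $\qti$, the segment $[\pti^{-n},\qti]_u$ passes through $\xti$ and contains all of $[\xti,\qti]_u$, which simplifies the placement of $\qti^{-j}$. The $\lam$-lemma guarantees the transverse accumulation behaviour underlying each of these configurations. In every case, $N$ is chosen large enough depending on the positions of $\pti$ and $\qti$ to ensure the chosen iterate lies strictly in the open interiors of both segments.
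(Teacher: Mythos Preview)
Your equivariance reduction is a clean idea and the overall shape of the argument is right, but the specific candidate $\qti^{-1}$ (or any fixed $\qti^{-j}$) does not cover all branch configurations. The problem occurs when $p$ and $q$ lie on \emph{different} branches of $W^s$, i.e.\ when $x\in\,]p,q[_s$. In your transformed picture the segment $]\pti^{-n},\qti[_s$ then passes through $\xti$, and its portion on $\qti$'s $W^s$-branch is exactly $]\xti,\qti[_s$. But $\phi^{-1}$ \emph{expands} $W^s$, so $\qti^{-j}$ sits on $\qti$'s branch \emph{beyond} $\qti$ relative to $\xti$; hence $\qti^{-j}\notin\,]\pti^{-n},\qti[_s$ for any $j\ge1$ and any $n$. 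No choice of $N$ repairs this: the obstruction is independent of $n$. Your last paragraph only treats the $W^u$-branch mismatch, not this $W^s$-branch mismatch.

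The fix is to change the candidate according to the branch case, exactly as the paper does. When $x\in\,]p,q[_s$ but $x\notin\,]p,q[_u$, a $\pti$-iterate works: in your picture $\pti^{-n+1}$ lies (for large $n$) strictly between $\pti^{-n}$ and $\qti$ on $W^u$, and on $W^s$ it is on $\pti$'s branch, closer to $\xti$ than $\pti^{-n}$, hence in $]\pti^{-n},\xti[_s\subset\,]\pti^{-n},\qti[_s$. When $x$ lies in both open segments one can simply take $\xti$ itself. The paper's proof is organised around precisely this fourfold case split (whether $x$ lies in $]p,q[_u$ and/or $]p,q[_s$) and picks the intersection point from $\{\pti^{\pm1},\xti,\qti^{N-1}\}$ accordingly; your equivariance viewpoint is compatible with that, but you still need the same case distinction in the choice of witness.
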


\begin{proof}
Let $\pti$ etc. be the point associated to $p$ in the lifted tangle on $\Mti$.
Let $\phi$ be {\em $W$-orientation preserving}.

Consider the case $x \notin\ ]p,q[_u$ and $x \notin\ ]p,q[_s$. Then there is $N
\in \N_0$ such that $\pti^1 \in\ ]\pti, \qti^n[_u \ \cap\ ]\pti, \qti^n[_s$ for
all $n \geq N$ and $\pti^{-1} \in\ ]\pti, \qti^n[_u \ \cap\ ]\pti, \qti^n[_s$
for all $n \leq -N$.

If $x \in\ ]p,q[_u \ \cap\ ]p,q[_s$ then $\xti\in \ ]\pti, \qti^n[_u\ \cap  \
]\pti, \qti^n[_s$ for all $n \in \Z$.

Consider the case $x \in \ ]p,q[_u$ and $x \notin \ ]p,q[_s$. Then there is $N
\in \N_0$ such that $\qti^{N-1} \in\ ]\pti, \qti^n[_u \ \cap\ ]\pti, \qti^n[_s$
for all $n \geq N$ and $\pti^{-1} \in \ ]\pti, \qti^n[_u \ \cap\ ]\pti,
\qti^n[_s$ for all $n \leq -N$.
In the case $x \notin \ ]p,q[_u$ and $x \in \ ]p,q[_s$ conclude analogously.

Now consider {\em $W$-orientation reversing $\phi$}. Here we have to distinguish
between even and odd $n \in \Z$. Since $\phi^2$ is orientation preserving the
above proof carries over for even $n$ if we replace $p^1$ by $p^2$ etc. Thus we
only have to prove the claim for {\em odd} $n$.

If $x \notin\ ]p,q[_u$, $]p,q[_s$ then $\xti\in \ ]\pti, \qti^n[_u\ \cap  \
]\pti, \qti^n[_s$ for all odd $n$.

If $x \in\ ]p,q[_u\ \cap\ ]p,q[_s$ there is $N \in \N_0$ such that $\pti^2 \in\
]\pti, \qti^n[_u \ \cap\ ]\pti, \qti^n[_s$ for all odd $n \geq N$ and $\pti^{-2}
\in\ ]\pti, \qti^n[_u \ \cap\ ]\pti, \qti^n[_s$ for all odd $n \leq -N$.

If $x \in \ ]p,q[_u$ and $x \notin\ ]p,q[_s$ then there is an odd $N \in \N_0$
such that $\pti^2 \in\ ]\pti, \qti^n[_u\ \cap\ ]\pti, \qti^n[_s$ for odd $n \geq
N$ and $\qti^{N+2} \in \ ]\pti, \qti^n[_u\ \cap\ ]\pti, \qti^n[_s$ for odd $n
\leq - N$.

If $x \notin \ ]p,q[_u$ and $x \in\ ]p,q[_s$ conclude analogously.
\end{proof}

Now we prove that for $p \in \mcHpr$ the differential $\mathfrak d$
does not contain infinitely many iterates $m(p,q^n)q^n$ of some primary $q$.
This implies the well-definedness of $\mathfrak d$ and $\del$.

\begin{Proposition}
\label{finite prim iterate}
Let $p$, $q\in \mcHpr$ and $\mcM(p,q) \neq \emptyset$ and set $q^n:=\phi^n(q)$
for $n \in \Z$.
Then
\beqs
\#\{n \in \Z \mid \mcM(p,q^n) \neq \emptyset\} < \infty.
\eeqs
\end{Proposition}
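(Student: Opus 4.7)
The plan is to chain together the two preceding results \refpropclassifind\ and \refalwaysintersections. Note first that $\mcM(p,q)\neq\emptyset$ forces $\mu(p,q)=1$, and by \refmuZcomp\ we have $\mu(p,q^n)=\mu(p,q)=1$ for every $n\in\Z$, so $\mcM(p,q^n)$ is well-defined throughout. Passing to the universal cover, it suffices (as noted at the beginning of the subsection) to show that $\mcM(\pti,\qti^n)=\emptyset$ for all sufficiently large $\betrag{n}$, where $\pti$ and $\qti^n$ are the lifts associated to $p$ and $q^n$.

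First, I apply \refalwaysintersections\ to $p$ and $q$: there exists $N\in\N_0$ such that for every $n\in\Z$ with $\betrag{n}\geq N$ there is some point
\beqs
r_n\in\ ]\pti,\qti^n[_u\ \cap\ ]\pti,\qti^n[_s.
\eeqs
Second, \refpropclassifind\ asserts that any $v\in\mcM(\pti,\qti^n)$ is in fact an embedding. I then argue that these two facts are incompatible: if $v:D\to\Mti$ is an embedded di-gon with vertices $\pti$ and $\qti^n$, then $v|_{B_u}$ and $v|_{B_s}$ parametrize $[\pti,\qti^n]_u$ and $[\pti,\qti^n]_s$ bijectively, and $v|_{\del D}$ is injective. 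But the interior intersection point $r_n$ then has one preimage on $B_u$ and another on $B_s$; since $r_n\neq \pti,\qti^n$ these preimages are distinct and both lie on $\del D$, contradicting injectivity of $v$ on the boundary.

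Hence $\mcM(\pti,\qti^n)=\emptyset$ whenever $\betrag{n}\geq N$, so the same holds downstairs for $\mcM(p,q^n)$. Therefore
\beqs
\{n\in\Z\mid \mcM(p,q^n)\neq\emptyset\}\subset\{-N+1,\dots,N-1\},
\eeqs
which is finite. The main obstacle, already absorbed into the preceding results, is the combinatorial classification behind \refpropclassifind\ (ruling out non-embedded index-$1$ immersions between primary points) together with the $\lam$-lemma type oscillation argument underlying \refalwaysintersections; once both are in hand the present proposition reduces to the short incompatibility argument above.
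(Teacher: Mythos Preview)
Your proof is correct and follows essentially the same approach as the paper: lift to the universal cover, invoke \refalwaysintersections\ to produce an interior intersection point $r_n\in\ ]\pti,\qti^n[_u\ \cap\ ]\pti,\qti^n[_s$ for $\betrag{n}\geq N$, and then use \refpropclassifind\ to derive a contradiction from the resulting failure of injectivity. The only cosmetic difference is the order of presentation---the paper assumes $\vti_n$ exists and shows it cannot be an embedding, then cites \refpropclassifind, whereas you state the classification first and then argue incompatibility---but the logical content is identical.
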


\begin{proof}
Denote by $\pti$, $\qti$, $\qti^n$ etc. the associated points in $\Wti^u \cap
\Wti^s$ and recall that $v \in \mcM(p,q)$ exists if and only its lift $\vti \in
\mcM(\pti, \qti)$ exists.
\refalwaysintersections\ yields the existence of some $N>0$ such that $]\pti,
\qti^n[_u \ \cap \  ]\pti,\qti^n[_s \ \neq \emptyset$ for all $n \in \Z$ with
$\betrag{n} \geq N$. 

Assume $\vti_n \in \mcM(\pti,\qti^n) \neq \emptyset$ for some $n$ with
$\betrag{n} \geq N$. Since $[\pti,\qti^n]_u=\vti_n(B_u)$ and $
[\pti,\qti^n]_s=\vti_n(B_s)$ there is $z_u \in B_u$ and $z_s \in B_s$ such that
$\vti_n(z_u)=\vti_n(z_s)$. Since $\Wti^u$ and $\Wti^s$ do not have
self-intersections it follows $z_u$, $z_s \notin \{(-1,0), (1,0)\}$. Therefore
$\vti_n$ is not globally injective and thus no embedding. The claim now follows
from \refpropclassifind.
\end{proof}

The gluing theorem for primary points is clearly a special case of \refgluing. 
But it is a priori not clear, that the cutting procedure yields two primary
`cutting points' $q_u$ and $q_s$.

\begin{Proposition}[Classification for index difference 2]
\label{prop classif primary}
Let $p$, $r\in \mcHpr$ with $\mu(p,r)=2$ and let $\pti$ and $\rti$ be the
associated points in $\Wti^u \cap \Wti^s$. The possibly arising immersed hearts
$w \in \mcNhat(\pti,\rti)$ appear shadowed in Figure \ref{primary cutting}. $w$
is an embedding apart from the case $(\mu(\pti,\xti),\mu(\xti,\rti))=(1,1)$
where it is not globally injective.
\end{Proposition}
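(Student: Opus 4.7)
The approach mirrors that of \refpropclassifind. I would begin by lifting the tangle to the universal cover $\tau \colon \Mti \to M$ and then dropping tildes, so that $\Wti^u$ and $\Wti^s$ become two embedded, self-intersection free curves in the simply connected surface $\Mti$, and $p = \pti$, $r = \rti$, $x = \xti$ are honest intersection points with a well-defined ordering on each branch. Since lifts of elements of $\mcNhat(p,r)$ are exactly elements of $\mcNhat(\pti,\rti)$, it suffices to classify the latter.

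By the additivity in \refmuZcomp\ I split
\[
2 = \mu(p,r) = \mu(p,x) + \mu(x,r),
\]
and \refindexprim\ forces both summands to lie in $\{\pm 1, \pm 2, \pm 3\}$. The admissible ordered pairs $(\mu(p,x),\mu(x,r))$ are therefore exactly $(-1,3)$, $(1,1)$ and $(3,-1)$. For each pair, \refindexprim\ supplies, uniquely up to reparametrization, an embedded polygon $P_1$ between $p$ and $x$ and an embedded polygon $P_2$ between $x$ and $r$, each being a di-gon, a standard heart, or a $2$-gon with two concave vertices according to the corresponding Maslov index.

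For each principal case I would enumerate subcases according to which sides of $x$ the segments $]x,p[_i$ and $]x,r[_i$ occupy for $i \in \{s,u\}$; since $W^u$ and $W^s$ are one-dimensional and self-intersection free, each pair $]x,p[_i \cap ]x,r[_i$ is either empty or forces an inclusion of one segment into the other, so the bookkeeping is finite. In each resulting subcase I glue $P_1$ and $P_2$ along the common vertex $x$, read off the two candidate boundary arcs $[p,r]_u \subset W^u$ and $[p,r]_s \subset W^s$, and decide whether the bounded region can be parametrized as an orientation preserving immersion of $D_b$ or $D_c$ satisfying the wedge-angle conditions of \refinjaroundvertex\ at the convex and the concave vertex. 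Discarding arrangements incompatible with \refpositionprim\ or with the primarity of $p$ and $r$, and using the $(u \leftrightarrow s)$ symmetry to halve the workload, I would then match the survivors with the shadowed configurations in Figure \ref{primary cutting}.

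Embeddedness in all but one case follows from the winding number remark preceding \refinjaroundvertex: a candidate $w$ is an embedding iff every component of $\Mti \setminus w(\del D_b)$ or $\Mti \setminus w(\del D_c)$ has $\Ind_w \leq 1$, and this can be read off each planar picture. The main obstacle, and the only genuinely non-embedded case, is $(\mu(p,x),\mu(x,r))=(1,1)$: here $P_1$ and $P_2$ are both standard di-gons meeting at $x$, and the only way to realise a heart from $p$ to $r$ with $W^u$- and $W^s$-boundary forces the two digons to overlap so that $x$ lies in the interior of $w$ with winding number two. Verifying simultaneously that this non-injective configuration actually occurs and that no further non-embedded configurations sneak into the other two principal cases is the delicate point; once every subcase is drawn, the remainder is direct inspection.
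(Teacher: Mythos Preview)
Your approach is essentially identical to the paper's: lift to the universal cover, decompose $\mu(p,r)=\mu(p,x)+\mu(x,r)$ into the three admissible pairs $(3,-1)$, $(1,1)$, $(-1,3)$ via \refindexprim, and then run the finite case analysis according to whether $]x,p[_i$ and $]x,r[_i$ are disjoint or nested, matching the survivors against Figure~\ref{primary cutting}. One small correction on the $(1,1)$ case: the fixed point $x$ actually sits on the \emph{boundary} of $w$ (indeed it is one of the cutting points $q_u$ or $q_s$ in Figure~\ref{primary cutting}), not in the interior with winding number two; the failure of global injectivity comes from a small overlapping ``nose'' near the concave vertex where a region is covered twice, not from $x$ itself being doubly covered.
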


\begin{proof}
In the following, we work with the lifted tangle on $\Mti$. We drop the tilde
associated to symbols on $\Mti$, i.e. we identify $p=\pti$ and $r=\rti$ etc.

Since $[p]=[r]=[x]$ we can write $\mu(p,r)=\mu(p,x)+\mu(x,r)=2$. Now we proceed
as in the proof of \refpropclassifind\ and check the possible combinations for
$(\mu(p,x), \mu(x,r))$. \refindexprim\ restricts the possibilities to
$(\mu(p,x), \mu(x,r)) \in \{(3,-1), (1,1), (-1, 3)\}$
and we recall that the immersions of index $\mu(p,x)$ and $\mu(x,r)$ between $p$
and $x$ and $x$ and $r$ are embeddings. As before, we will consider the cases
$]x,p[_i \ \cap \ ]x,r[_i\ = \emptyset$ or $\neq \emptyset$. If $]x,p[_i \ \cap
\ ]x,r[_i\ \neq \emptyset$ this implies $[x,p]_i \subset [x,r]_i$ or $[x,r]_i
\subset [x,p]_i$ since $W^i$ is free of self-intersections and $\dim W^i =1$.
All possibly arising immersions are listed in Figure \ref{primary cutting}.
\end{proof}

\begin{figure}
\begin{center}

\input{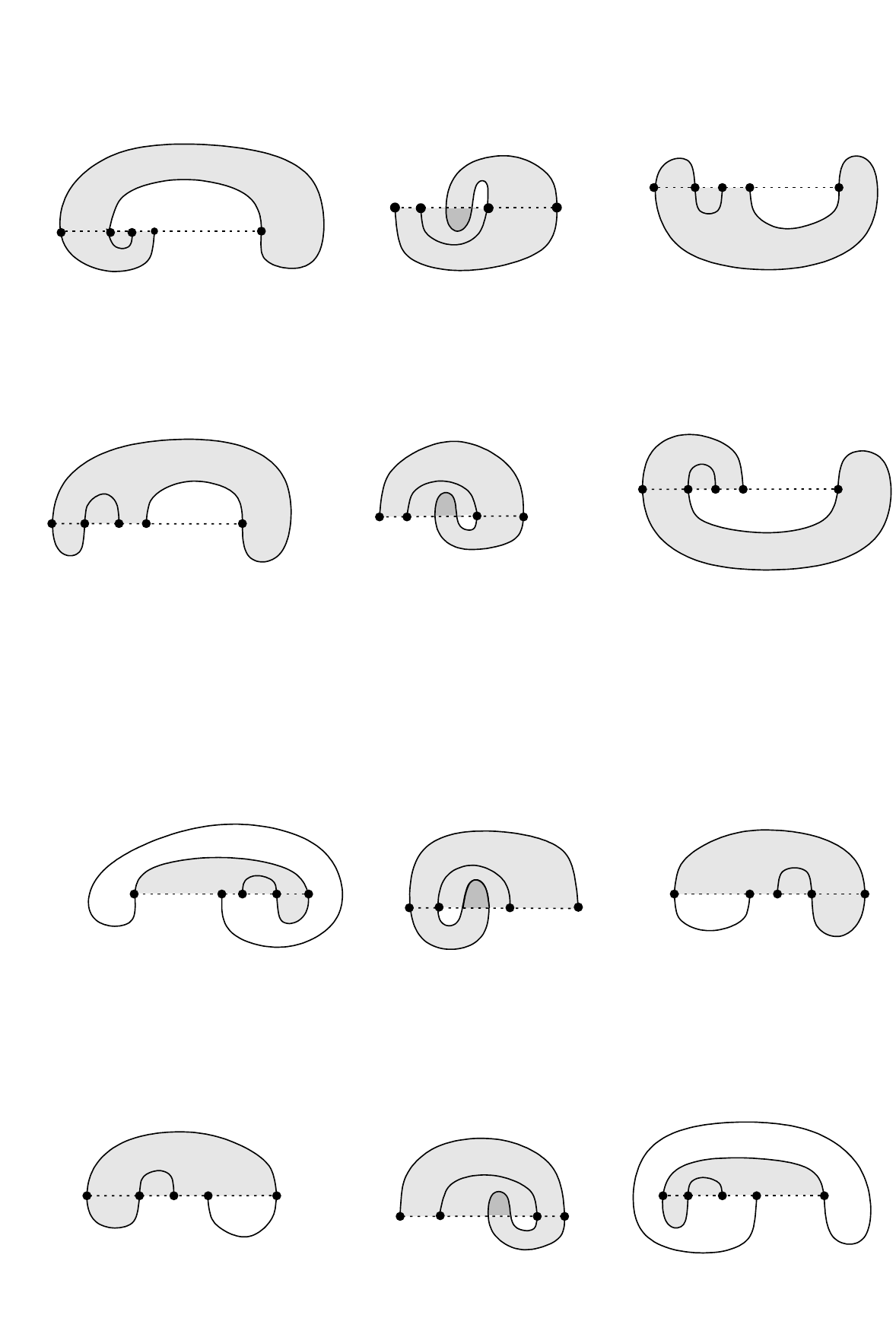_t}

\caption{Cutting for primary points.}
\label{primary cutting}

\end{center}
\end{figure}

It will turn out that for primary $p$ and $r$ with $\mu(p,r)=2$ either {\em
both} cutting points $q_u$ and $q_s$ are primary {\em or none of them}.
In the proof of \refexistenceqq, {\it strongly intersecting} is only needed if
the concave vertex of the heart is the fixed point. Since $x \notin \mcHpr$ we
can drop this assumption in the following statement. Moreover, in
\refexistenceqq, the $\lam$-lemma was applied to the intersection at the concave
vertex of the immersion. Thus it is enough to require only the primary points to
be transverse.

\begin{Theorem}[Cutting for primary points] 
\label{prim cut th}
Let all primary points be transverse and $p$, $ r \in \mcHpr$ with $\mu(p,r)=2$
and $w \in \mcN(p,r)$.
Then there are unique points $q_u$ and $q_s$ such that either both $q_i$ are
primary admitting $v_i \in \mcM(p,q_i)$ and $\vhat_i \in \mcM(q_i, r)$ with
$\vhat_i \#v_i=w$ for $i \in \{s,u\}$ or none of them is primary.
\end{Theorem}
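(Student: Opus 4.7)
The plan is to reduce to the general cutting construction \refexistenceqq\ and then read off the primary/non-primary dichotomy from the classification of immersed hearts in \refpropclassifprimary.

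First I would apply \refexistenceqq\ directly to $w \in \mcN(p,r)$ to obtain the distinct cutting points $q_u, q_s \in \mcH$ together with the factorizations $w = \vhat_i \# v_i$, where $v_i \in \mcM(p,q_i)$ and $\vhat_i \in \mcM(q_i,r)$ for $i \in \{s,u\}$. As noted in the comment preceding the statement, the strongly intersecting assumption in \refexistenceqq\ was only used in the subcase where the concave vertex of the heart coincides with the fixed point $x$; since $p, r \in \mcHpr$ while $x \notin \mcHpr$, this degenerate configuration cannot arise here, so the hypothesis can be dropped. Similarly the $\lam$-lemma was invoked only on a small neighbourhood of the concave vertex, which is one of the (transverse) primary points $p$ or $r$, so transversality of primary points alone suffices. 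This produces the unique pair $(q_u, q_s)$; the remaining issue is whether they are primary.

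Next I would lift the tangle to $\Mti$ and appeal to \refpropclassifprimary. Writing $2 = \mu(p,r) = \mu(p,x) + \mu(x,r)$ and invoking \refindexprim, the pair $(\mu(p,x), \mu(x,r))$ is restricted to the three possibilities $(3,-1)$, $(1,1)$ and $(-1,3)$. Branching further on whether each of the intersections $]x,p[_i \cap\ ]x,r[_i$ for $i \in \{s,u\}$ is empty (and, when nonempty, on the inclusion $[x,p]_i \subset [x,r]_i$ versus the reverse, using that $\dim W^i = 1$ and $W^i$ has no self-intersections) yields precisely the configurations drawn in Figure \ref{primary cutting}. In each cell the figure exhibits the unique shaded heart $w \in \mcNhat(\pti,\rti)$ together with the locations of the two cutting points produced by the cutting procedure.

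The final step is then a case-by-case inspection of Figure \ref{primary cutting}: for each configuration one verifies whether $]q_u, x[_u \cap\ ]q_u, x[_s \cap \mcH_{[x]} = \emptyset$, and analogously for $q_s$. The symmetry between the two cutting constructions interchanges the roles of $W^u$ and $W^s$, and hence of the pairs of segments $[x,p]_u \leftrightarrow [x,p]_s$ and $[x,r]_u \leftrightarrow [x,r]_s$, so any intersection point that obstructs primarity of $q_u$ produces a mirrored obstruction at $q_s$ and vice versa. Consequently, in each configuration either both $q_u$ and $q_s$ are primary or neither of them is, which is the dichotomy claimed. The main obstacle is the somewhat tedious bookkeeping of the case analysis, but no new geometric input beyond \refpropclassifprimary\ and \refinjaroundvertex\ is required.
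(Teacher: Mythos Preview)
Your overall strategy matches the paper's: reduce to \refexistenceqq\ for existence of $q_u,q_s$, lift to $\Mti$, invoke the classification \refpropclassifprimary, and read off the dichotomy from Figure~\ref{primary cutting}. The observations that the strongly-intersecting hypothesis is not needed (since $x\notin\mcHpr$ cannot be the concave vertex) and that transversality is only used at the concave vertex are exactly those the paper makes.

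There is, however, a gap in your final step. The $u\leftrightarrow s$ symmetry you invoke sends a given configuration in Figure~\ref{primary cutting} to a \emph{different} configuration (it exchanges the $(3,-1)$ column with the $(-1,3)$ column and permutes the $(1,1)$ pictures among themselves); it does not relate $q_u$ to $q_s$ within the \emph{same} picture. So the inference ``any intersection point that obstructs primarity of $q_u$ produces a mirrored obstruction at $q_s$'' does not follow from that symmetry. The paper instead does the explicit check: for $(\mu(p,x),\mu(x,r))\in\{(3,-1),(-1,3)\}$ the heart is embedded and one reads off directly from the figure that both $q_u$ and $q_s$ are primary; for $(\mu(p,x),\mu(x,r))=(1,1)$ one of the two cutting points is literally the fixed point $x$ (hence excluded from $\mcHpr$ by definition), while the other cutting point has its segments $]\cdot,x[_u$ and $]\cdot,x[_s$ crossing over each other at $p$ or at $r$, so it too fails to be primary. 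These are two distinct mechanisms, not mirror images of one another, and your symmetry shortcut does not produce them.

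A smaller point: you claim the factorizations $v_i,\vhat_i$ immediately from \refexistenceqq, but a priori $q_u,q_s$ could be non-transverse intersections (only primary points are assumed transverse), and then the smooth di-gon immersions at those vertices are not automatic. The paper handles this by first establishing the primary/non-primary dichotomy from the picture, and only then, in the primary case, invoking the transversality hypothesis on primary points so that the full conclusion of \refexistenceqq\ applies.
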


\begin{proof}
It is sufficient to show the claim for the lifted tangle generated by $\Wti^u$
and $\Wti^s$ on $\Mti$. We drop the tilde associated to symbols on $\Mti$ and
identify $\pti=p$ etc.

Let $p$ and $r$ be primary with $\mu(p,r)=2$. 
$p$ and $r$ are transverse intersection points such that the existence (and
uniqueness) of the cutting points $q_u$ and $q_s$ follows from the proof of
\refexistenceqq. But $q_u$ and $q_s$ might be nontransverse. We will prove that
$q_u$ and $q_s$ are either both primary or both nonprimary. If both are primary
then they are, by assumption, transverse and the claim follows from
\refexistenceqq.

\refpropclassifprimary\ together with Figure \ref{primary cutting} describes all
possible immersions of index difference 2 and sketches $q_u$ and $q_s$ and the
cuts to $q_u$ and $q_s$. For simplicity, the $q_i$ are sketched transverse.

Checking the shapes in Figure \ref{primary cutting}, we find that for all cases
$(\mu(p,x), \mu(x,r)) \in \{(3,-1), (-1,3)\}$ the immersion $w$ is an embedding
and that $q_u$ and $q_s$ are both primary.
In the case $]x,p[_u \ \cap \ ]x,r[_u\ \neq \emptyset = ]x,p[_s \ \cap \
]x,r[_s$, we only sketched the case $q_u \in [x,p]_s$, but also $q_u \in
[x,r]_s$ would be primary.
In the case $]x,p[_u \ \cap \ ]x,r[_u\ = \emptyset \neq ]x,p[_s \ \cap \
]x,r[_s$, we have to distinguish $q_s \in [x,p]_u$ or $q_s \in [x,r]_u$, but in
both cases $q_s$ is primary.

Now consider the case $(\mu(p,x), \mu(x,r))=(1,1)$. First we note that $w$ is
not necessarily an embedding. One of the cutting points is the fixed point which
is per definitionem not primary. Furthermore, those segments which join the
other cutting point to $x$ overcross in $p$ or $r$ such that this cutting point
also is nonprimary.
\end{proof}

\begin{proof}[Proof of \refdelfraksquare]
In the following, we work with the lifted tangle on $\Mti$ and drop the tilde
associated to symbols on $\Mti$ and identify $p=\pti$ etc.
We compute for a generator $p \in \mcHpr$
\begin{align*}
\mathfrak d_{m-1}(\mathfrak d_m (p))& 
= \mathfrak d_{m-1}\left(\sum_{\stackrel{q\in \mcHpr}{\mu(q)=\mu(p)-1}}m(p,q)q
\right) \\
& = \sum_{\stackrel{r\in \mcHpr}{\mu(r)=\mu(p)-2}} \ \sum_{\stackrel{q\in
\mcHpr}{\mu(q)=\mu(p)-1}}m(p,q)\cdot m(q,r) r \\
& = \sum_{\stackrel{r\in \mcHpr}{\mu(r)=\mu(p)-2}} \ \left( \sum_{\stackrel{q\in
\mcHpr}{\mu(q)=\mu(p)-1}}m(p,q)\cdot m(q,r)\right) r.
\end{align*}
Thus it is enough to show for fixed $r$
\beqs
\sum_{\stackrel{q\in \mcHpr}{\mu(q)=\mu(p)-1}}m(p,q)\cdot m(q,r)=0.
\eeqs
If all sign products vanish we are done. If $m(p,q)\cdot m(q,r) \neq 0$ both
signs $m(p,q)$ and $m(q,r)$ must be nonzero. In that case $\mcMhat(p,q)$ and
$\mcMhat(q,r)$ are nonempty and by the gluing construction $\mcNhat(p,r)$ is
nonempty. \refprimcutth\ tells us that for fixed $p$ and $r$ there are either
exactly two primary cutting points $q_u$ and $q_s$ or none. We are in the first
case since our $q$ is one of them. Since $m(p,q) \cdot m(q,r)=0$ for all $q \neq
q_u$, $q_s$ the sum simplifies to 
\beqs
m(p,q_u) \cdot m(q_u,r) + m(p,q_s) \cdot m(q_s,r)
\eeqs
which vanishes since $m(p,q_u) \cdot m(q_u,r) = - m(p,q_s) \cdot m(q_s,r)$ by
\refsignskewsym.
\end{proof}

\section{Examples}
\label{chapterexamples}

\subsection{Computation of examples}

In this subsection, we discuss the aptitude and accessibility of primary Floer
homology for explicit computations. We calculate two examples which arise from a
slight perturbation of the integrable systems sketched in Figure
\ref{figureeight}. For simplicity assume to be in $\R^{2n}$, i.e. the sets of
semi-primary and primary points coincide.

\begin{figure}[h]
\begin{center}

\input{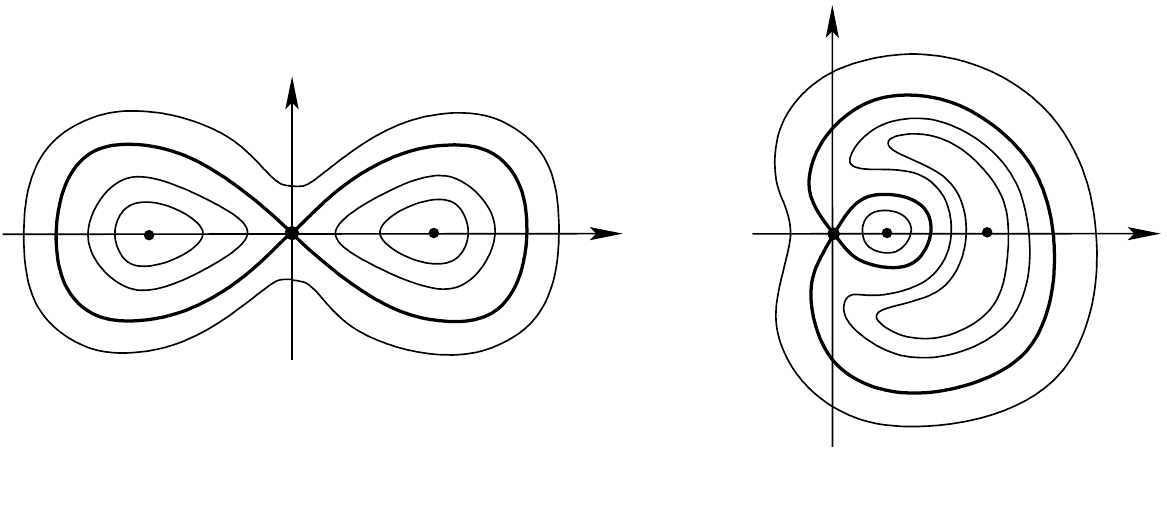_t}

\caption{`Figure eight' and `tilted figure eight' homoclinic loops.}
\label{figureeight}

\end{center}
\end{figure}

\vsp

If we want to compute the primary Floer homology of an explicit tangle we have
to locate the primary points. 
Given a pair of intersecting branches of $W^s$ and $W^u$, we start at the fixed
point $x$ and run simultanously along both branches until they intersect for the
first time. This intersection point $p$ is primary. By \refpositionprim, all
other primary points arising from this pair of branches have exactly one
representant in $]p,p^1[_s\ \cap\ ]p,p^1[_u$. Since all primary points are
transverse there is only a finite number of primary equivalence classes and we
locate their representants in $]p,p^1[_s\ \cap\ ]p,p^1[_u$ applying successively
\refprimarymax. If we proceed in this way for all pairs of intersecting branches
we obtain representatives for all primary equivalence classes.

\vsp

To discover for a given $p$ all $q$ with $\mcM(p,q) \neq \emptyset$ is a little
bit more tedious. \reffiniteprimiterate\ and \refalwaysintersections\ assure
that there is only a finite number of canditates and that they are `not to far
away' from $p$. Therefore it remains to check those candidates.

\vsp

Thus, primary Floer homology is entirely determined by sufficiently large,
fixed, {\em compact} segments of $W^s$ and $W^u$ centered around $x$. Therefore
primary Floer homology can always be computed --- one only needs to plot a
finite part of the tangle with sufficient accuracy. Altogether, primary Floer
homology provides finite information of an infinite chaotic tangle.

\vsp

Using the computation of the two examples below, we will assign homology groups
also to the homoclinic loops displayed in Figure \ref{figureeight}. This will be
done in \refhomoclinicloops\ using the invariance property of primary Floer
homology.

\subsection{Figure-eight example}

We compute the primary Floer homology of the schematic tangle in Figure \ref{hom
ex 1}. Such a tangle might arise from a figure-eight homoclinic loop of an
integrable system (Figure \ref{figureeight}) by means of the Melnikov method.
The hyperbolic fixed point $x$ and the elliptic fixed points $y$ and $\yti$ are
printed extra bold and the Maslov grading of the primary points is given. There
are eight equivalence classes $\langle p \rangle$, $\langle \pti \rangle$,
$\langle q \rangle$, $\langle b \rangle$, $\langle \qti \rangle$, $\langle \bti
\rangle$, $\langle r \rangle$ and $\langle \rti \rangle$ with $\mu(\langle p
\rangle)= \mu(\langle \pti \rangle)=-1$, $\mu(\langle q \rangle)= \mu(\langle b
\rangle)= \mu(\langle \qti \rangle)= \mu(\langle \bti \rangle)=-2$ and
$\mu(\langle r \rangle)= \mu(\langle \rti \rangle)=-3$. 
Using the $m(p,q)$-signs we obtain

\begin{figure}
\begin{center}

\input{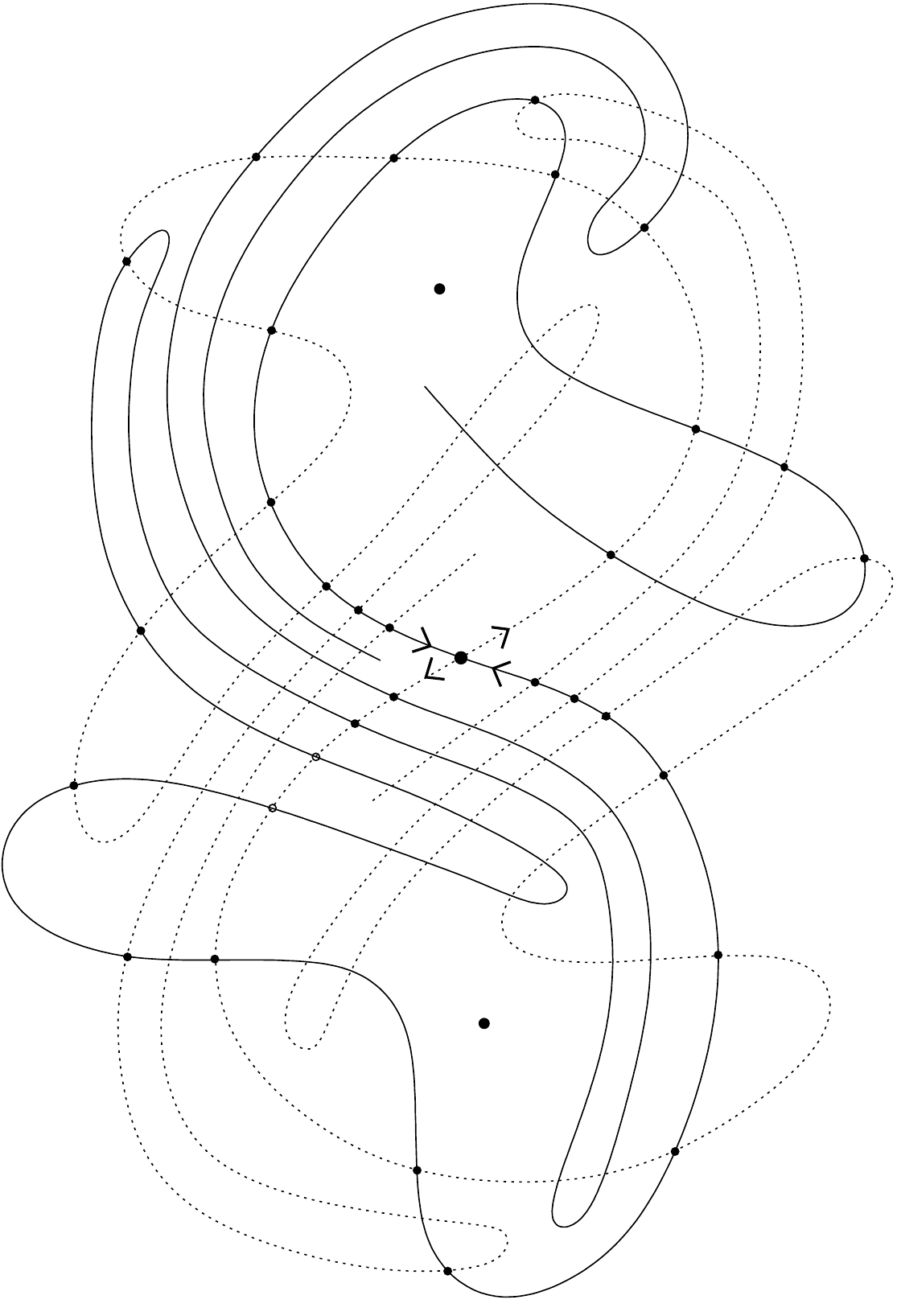_t}

\caption{A `figure eight' homoclinic tangle.}
\label{hom ex 1}

\end{center}
\end{figure}

\begin{align*}
& \del \langle p \rangle = \langle q \rangle - \langle q^{-1} \rangle + \langle
b \rangle - \langle \bti^2 \rangle = \langle b \rangle - \langle \bti \rangle,
&& \del \langle \qti \rangle = \langle r^3 \rangle - \langle \rti \rangle \\
& && = \langle r \rangle - \langle \rti \rangle ,
 \\
& \del \langle \pti \rangle = -\langle \qti \rangle + \langle \qti^{-1} \rangle
+ \langle \bti \rangle - \langle b^4 \rangle = - \langle b \rangle + \langle
\bti \rangle, 
&& \del \langle \bti \rangle = -\langle \rti \rangle + \langle \rti^1 \rangle
=0,
\\
& \del \langle q \rangle= - \langle r \rangle + \langle \rti^3 \rangle = -
\langle r \rangle + \langle \rti \rangle , 
&& \del \langle r \rangle =0,
\\
& \del \langle b \rangle = \langle r \rangle - \langle r^{-1} \rangle =0, 
&& \del \langle \rti \rangle =0.
\end{align*}
and using the $n(p,q)$-signs with the orientation on $W^u$ induced by setting
$x<_u p$ we obtain
\begin{align*}
& \del \langle p \rangle = \langle q \rangle - \langle q^{-1} \rangle + \langle
b \rangle - \langle \bti^2 \rangle = \langle b \rangle - \langle \bti \rangle, 
&& \del \langle \qti \rangle = \langle r^3 \rangle + \langle \rti \rangle \\
& && = \langle r \rangle + \langle \rti \rangle , \\
& \del \langle \pti \rangle = \langle \qti \rangle - \langle \qti^{-1} \rangle -
\langle \bti \rangle + \langle b^4 \rangle = \langle b \rangle - \langle \bti
\rangle, 
&& \del \langle \bti \rangle = \langle \rti \rangle - \langle \rti^1 \rangle =0,
\\
& \del \langle q \rangle= - \langle r \rangle - \langle \rti^3 \rangle = -
\langle r \rangle - \langle \rti \rangle = - (\langle r \rangle + \langle \rti
\rangle), 
&& \del \langle r \rangle =0,
\\
& \del \langle b \rangle = \langle r \rangle - \langle r^{-1} \rangle =0, 
&& \del \langle \rti \rangle =0.
\end{align*}

The different signs lead to different boundary operators and different
generators of the homologies. Nevertheless, they turn out to be isomorphic:

\begin{align*}
& H_{-1}(x,\phi, m\mbox{-signs})= \Z(\langle p \rangle + \langle \pti \rangle)
\simeq \Z (\langle p \rangle - \langle \pti \rangle)=H_{-1}(x,\phi,
n\mbox{-signs}) , \\
& H_{-2}(x,\phi, m\mbox{-signs})=\frac{\Z\langle b \rangle \oplus \Z \langle
\bti \rangle\oplus \Z (\langle q \rangle + \langle \qti \rangle )}{\Z (\langle b
\rangle - \langle \bti \rangle)} = H_{-2}(x,\phi, n\mbox{-signs}),\\
& H_{-3}(x,\phi, m\mbox{-signs})= \frac{\Z \langle r \rangle \oplus \Z \langle
\rti \rangle}{\Z(\langle r \rangle - \langle \rti \rangle)} \simeq \frac{\Z
\langle r \rangle \oplus \Z \langle \rti \rangle}{\Z(\langle r \rangle + \langle
\rti \rangle)}= H_{-3}(x,\phi, n\mbox{-signs}).
\end{align*}

\subsection{Tilted figure-eight example}

By perturbing a tilted figure-eight homoclinic loop as in Figure
\ref{figureeight}, the homoclinic tangle of Figure \ref{hom ex 2} might arise. 

\begin{figure}
\begin{center}

\input{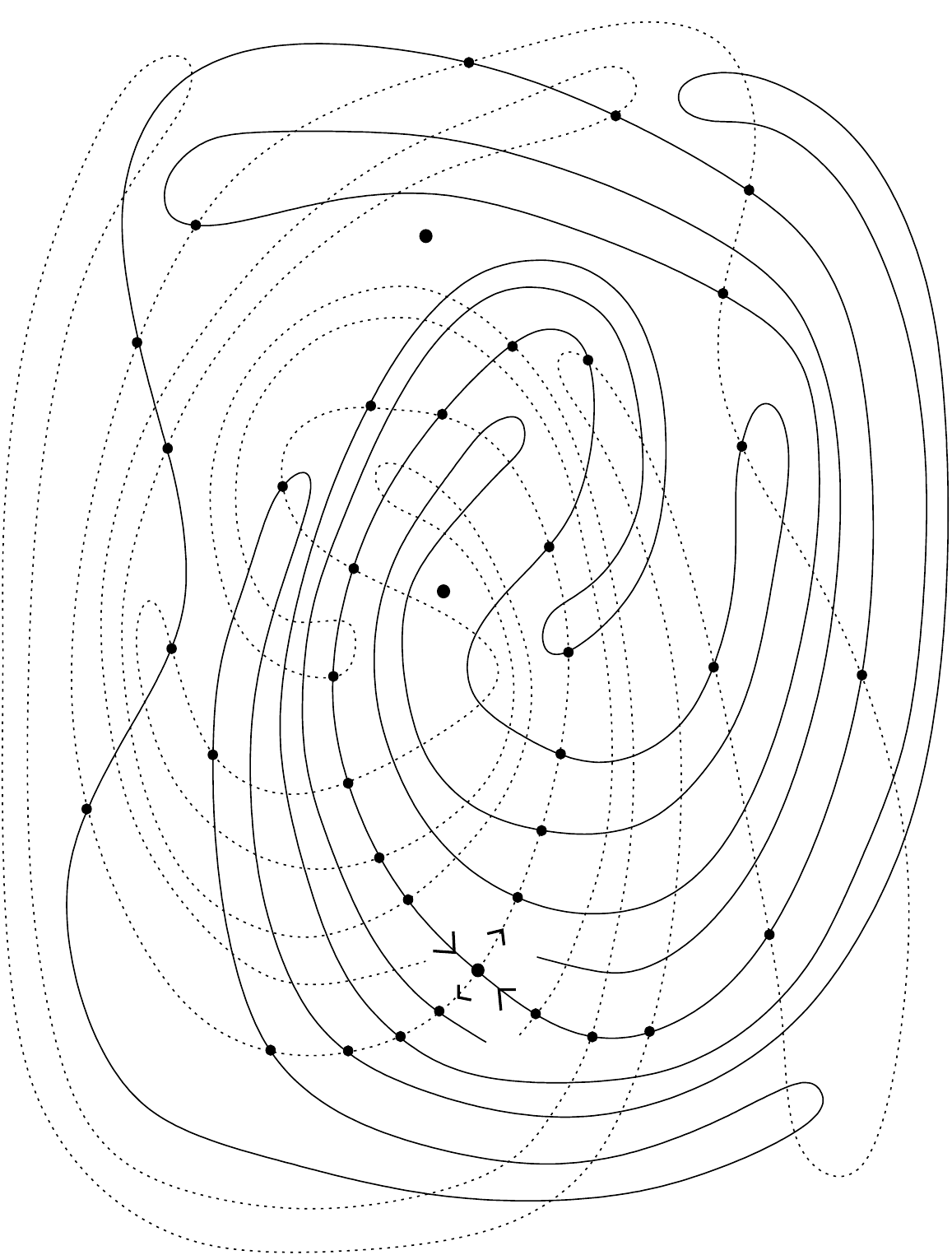_t}

\caption{A `tilted figure eight' homoclinic tangle.}
\label{hom ex 2}

\end{center}
\end{figure}

\vsp

There are the eight equivalence classes $\langle \pti \rangle$, $\langle \qti
\rangle$, $\langle s \rangle$, $\langle \sti \rangle$, $ \langle r \rangle$,
$\langle \rti \rangle$, $\langle p \rangle$ and $\langle q \rangle$ with Maslov
index
$\mu(\langle \pti \rangle)=3$, $\mu(\langle \qti \rangle)= \mu(\langle s
\rangle)= \mu(\langle \sti \rangle)=2$, $\mu( \langle r \rangle)= \mu(\langle
\rti \rangle)=1$, $\mu(\langle p \rangle)=-1$ and $\mu(\langle q \rangle)=-2$.
Using the $m(p,q)$-signs, we obtain as boundary operator
\begin{align*}
& \del \langle \pti \rangle =  -\langle \qti \rangle + \langle \qti^1 \rangle -
\langle s \rangle + \langle \sti^{-3} \rangle= -\langle s \rangle + \langle \sti
\rangle , 
&& \del \langle r \rangle =0, \\
& \del \langle \qti \rangle = \langle r^{-1} \rangle - \langle \rti^{-3} \rangle
= \langle r \rangle - \langle \rti \rangle,
&& \del \langle \rti \rangle =0, \\
& \del \langle s \rangle = \langle r \rangle - \langle r^{-1} \rangle = 0,
&& \del \langle p \rangle = \langle q \rangle - \langle q^{-1} \rangle =0, \\
& \del \langle \sti \rangle = - \langle \rti \rangle + \langle \rti^1 \rangle
=0,
&& \del \langle q \rangle =0.
\end{align*}
For the $n(p,q)$-signs, we fix an orientation of $W^u$ via chosing a
parametrization in direction from $x$ to $p$. This leads to
\begin{align*}
& \del \langle \pti \rangle =  \langle \qti \rangle - \langle \qti^1 \rangle +
\langle s \rangle - \langle \sti^{-3} \rangle= \langle s \rangle - \langle \sti
\rangle , 
&& \del \langle r \rangle =0, \\
& \del \langle \qti \rangle = \langle r^{-1} \rangle + \langle \rti^{-3} \rangle
= \langle r \rangle + \langle \rti \rangle,
&& \del \langle \rti \rangle =0, \\
& \del \langle s \rangle = \langle r \rangle - \langle r^{-1} \rangle = 0,
&& \del \langle p \rangle = \langle q \rangle - \langle q^{-1} \rangle =0, \\
& \del \langle \sti \rangle = \langle \rti \rangle - \langle \rti^1 \rangle =0,
&& \del \langle q \rangle =0.
\end{align*}
As homology, we obtain $H_l=0$ for all $l \notin \{\pm 1, \pm2\}$ regardless of
the chosen signs. But for the remaining four groups we compute
\begin{gather*}
H_{2}(x,\phi, m\mbox{-signs})= \frac{\Z \langle s \rangle \oplus \Z \langle \sti
\rangle}{\Z (-\langle s \rangle + \langle \sti \rangle)} \simeq \frac{\Z \langle
s \rangle \oplus \Z \langle \sti \rangle}{\Z (\langle s \rangle - \langle \sti
\rangle)}   =H_{2}(x,\phi, n\mbox{-signs}) , \\
H_{1}(x,\phi, m\mbox{-signs})= \frac{\Z \langle r \rangle \oplus \Z \langle \rti
\rangle}{\Z(\langle r \rangle - \langle \rti \rangle)} \simeq \frac{\Z \langle r
\rangle \oplus \Z \langle \rti \rangle}{\Z(\langle r \rangle + \langle \rti
\rangle)}  = H_{1}(x,\phi, n\mbox{-signs}),\\
H_{-1}(x,\phi, m\mbox{-signs})= \Z \langle p \rangle  = H_{-1}(x,\phi,
n\mbox{-signs}) \\
H_{-2}(x,\phi, m\mbox{-signs})= \Z \langle q \rangle = H_{-2}(x,\phi,
n\mbox{-signs}).
\end{gather*}

\section{Invariance}

In classical Lagrangian Floer theory, invariance of the homology under
Hamiltonian perturbations of the underlying Lagrangians is an important feature.
Thus one can choose a particular nice Lagrangian within the Hamiltonian isotopy
class for the computation of the homology.

\vsp

Our situation differs strongly from the classical one. Whereas in the classical
situation a Hamiltonian diffeomorphism $f$ is applied {\em directly} to a
Lagrangian $L$ changing it to $f(L)$, the change here occurs {\em indirectly}.
We are going to perturb the underlying symplectomorphism $\phi$ which results in
changing both the stable and unstable manifolds. Our invariance proof is
inspired by Floer's original proof in \cite{floer3} which uses explicit chain
homotopies. The more modern ansatz via homotopy of homotopies is unfortunately
not applicable since it is not compatible with the bifurcation nature of primary
points. We will use the invariance to assign homology groups to the homoclinic
loops in Figure \ref{figureeight}.

\vsp

Primary points are printed extra bold in figures. In order to obtain smaller
sketches, we sometimes draw the \hyp\ fixed point $x$ `splitted' into two
copies.
In this section, $(M,\om)$ is a closed symplectic two-dimensional \mf\ with
genus $g \geq 1$.
The group of smooth diffeomorphisms $\Diff(M)$ is endowed with the Whitney
topology (which coincides on \cpt\ \mf s with the $C^r$-topology) and
$\Diff_\om(M) \subset \Diff(M)$ is the group of symplectomorphisms.

\subsection{Main results}

Let $\phi \in \Diff^k(M)$ with $k \geq 1$ and $x \in \Fix(\phi)$ hyperbolic and
$\psi \in \Diff^k(M)$ sufficiently $C^k$-near to $\phi$. Then it is wellknown
that $\psi$ has a \hyp\ fixed point $y$ near $x$. $W^i(y, \psi)$ is $C^k$-near
$W^i(x, \phi)$ for $ i\in \{u,s\}$, at least on \cpt\ neighbourhoods of $y$ and
$x$ in $W^i(y, \psi)$ and $W^i(x, \phi)$.
$y$ is called the {\em continuation} of $x$ and the signs of the corresponding
eigenvalues coincide. 

Let $\phi$, $\psi \in \Diff_\om(M)$ and $x\in \Fix(\phi)$ and $y \in \Fix(\psi)$
both \hyp.
An {\em isotopy (between $(x,\phi)$ and $(y, \psi)$)} is a smooth path $\Phi:
[0,1] \to \Diff_\om(M)$, $ \tau \mapsto \Phi(\tau)=:\Phi_\tau$ with
$\Phi_0=\phi$, $\Phi_1=\psi$, $x_0=x$ and $x_1=y$ and $x_\tau \in
\Fix(\Phi_\tau)$ as continuation between $x$ and $y$ for all $\tau \in [0,1]$.
Attaching $\tau$ to a symbol associates it to $(x_\tau,\Phi_\tau)$, i.e.
$\mcHpr^\tau$ denotes the set of primary points of $(x_\tau,\Phi_\tau)$ etc.
$(x, \phi)$ is called {\em contractibly strongly intersecting (csi)} if $W^u$
and $W^s$ are strongly intersecting and if each pair of branches has
contractible homoclinic points. An isotopy $\Phi$ is csi if $(x_\tau,\Phi_\tau)$
is csi for all $\tau \in [0,1]$.

\begin{Theorem}[Invariance]
\label{inv th cpt}
Let $(M,\om)$ be a closed symplectic two-dimensional \mf\ with genus $g \geq
1$. 
Let $\phi$, $\psi \in \Diff_\om(M)$ with \hyp\ fixed points $x \in \Fix(\phi)$
and $y \in \Fix(\psi)$. Let $(x, \phi)$ and $(y, \psi)$ be csi and let all
primary points of $\phi$ and $\psi$ be transverse.
Assume there is a csi isotopy $\Phi$ from $(x, \phi)$ to $( y, \psi)$.
Then
\beqs
H_*(x,\phi) \simeq H_*(y, \psi).
\eeqs
\end{Theorem}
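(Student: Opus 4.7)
The strategy is to mimic Floer's original chain-homotopy argument while handling the two distinct ways in which primary points can change along the isotopy. The first task is to stratify the parameter space: I would show that the set of $\tau \in [0,1]$ at which $(x_\tau,\Phi_\tau)$ undergoes a nontrivial combinatorial change is discrete, so that after a generic perturbation of $\Phi$ inside the space of csi isotopies we get finitely many critical parameters $0=\tau_0 < \tau_1 < \cdots < \tau_N = 1$ with exactly one elementary event occurring at each $\tau_i$. Within each open subinterval $(\tau_i, \tau_{i+1})$, the tangle is structurally stable (primary points persist transversely and keep their status), so the natural identification of generators is a chain isomorphism; the theorem then follows by composing isomorphisms across each $\tau_i$.

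Next, I would classify the elementary moves into two types, corresponding exactly to the (i)/(ii) dichotomy highlighted after the statement. A \emph{mixed move} occurs when $W^s_\tau$ and $W^u_\tau$ become tangent at $\tau_i$ and a pair of primary points $p_+, p_-$ with $\mu(p_+,p_-)=1$ is born or annihilated; the pair bounds an embedded di-gon and necessarily sits adjacently on a single pair of branches. A \emph{primary move} occurs when no intersection is created or destroyed, but a homoclinic point $p$ crosses one of the segments $]x,q[_u$ or $]x,q[_s$ for some other contractible $q$, toggling the condition $]x,p[_u \cap\, ]x,p[_s \cap\, \mathcal{H}_{[x]} = \emptyset$; this causes $p$ (and a finite number of its neighbors) to switch between primary and secondary status without any change in the underlying intersection set.

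The core of the argument is the construction of explicit chain-level isomorphisms for each move. For a mixed move, the created pair $(p_+, p_-)$ satisfies $m(p_+, p_-) = \pm 1$ by $\refsignskewsym$-type considerations, so $\mathbb{Z}\langle p_+\rangle \oplus \mathbb{Z}\langle p_-\rangle$ forms an acyclic subcomplex once one also checks that the remaining entries $m(p_+, \cdot)$ and $m(\cdot, p_-)$ match pairwise via gluing; projecting along this direct summand produces the pre-move complex, and the homotopy $K$ with $\partial K + K\partial = \Psi - \Id$ is obtained by counting di-gons that sweep through the tangency as $\tau$ crosses $\tau_i$. For a primary move, the set of intersection points is unchanged, so the isomorphism is combinatorial: one rewrites generators of the ``old'' complex as explicit $\mathbb{Z}$-linear combinations of generators of the ``new'' complex, tracking how the differential is reorganized when a newly revealed primary $q'$ takes over from $p$, and verifies via case analysis on the classification in $\refpropclassifprimary$ that the two differentials agree under this base change.

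The main obstacle, as the author emphasizes, is precisely that these two kinds of events are incommensurable: a mixed move alters the generating set while a primary move only relabels it, yet both must fit into the same global chain-homotopy framework. I expect the hardest parts of the proof to be (a) showing that genericity and the csi condition can be preserved along a perturbed isotopy so that only one event occurs at each $\tau_i$ --- this requires a transversality argument within a non-compact Lagrangian setting where classical Sard-Smale does not apply directly, and the finiteness of critical $\tau$ rests on the $\lambda$-lemma to control the accumulation of homoclinic oscillations near $x_\tau$; and (b) verifying the sign compatibility for primary moves, where one must rule out that a sequence of adjacent relabelings creates an obstruction in the differential. Once these two points are settled, the application to the homoclinic loops of Figure \ref{figureeight} via $\refhomoclinicloops$ is immediate: any Hamiltonian perturbation splitting the loop yields a csi tangle isotopic (through csi data) to the models computed in $\refchapterexamples$.
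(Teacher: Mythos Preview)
Your classification of elementary events is where the argument breaks down. You posit a ``primary move'' in which \emph{no intersection is created or destroyed} but a homoclinic point $p$ nonetheless changes its primary/secondary status. This cannot happen. Along a one-parameter family with $W^u_\tau$ and $W^s_\tau$ varying continuously, the relative ordering of persisting homoclinic points on each branch is preserved, so the condition $]x,p[_u\cap\,]x,p[_s\cap\,\mathcal H_{[x]}=\emptyset$ can only toggle when a \emph{new} contractible intersection point appears in (or an old one disappears from) $]x,p[_u\cap\,]x,p[_s$. That is a genuine tangency bifurcation, not a pure relabeling. The paper makes this precise in \refflipviamixed: any primary--secondary flip of a persisting point is forced by an $(r,s)$-bifurcation in which exactly one of $r,s$ is primary and the other secondary.

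Consequently your dichotomy misses the actual picture. The paper classifies bifurcations not by whether points are created, but by the \emph{status} of the two created points: both primary (a \emph{primary move}), one primary and one secondary (a \emph{mixed move}), or both secondary (a \emph{secondary move}). Your ``mixed move'' is the paper's primary move, and your ``primary move'' is a misdescription of the flip phenomenon that actually accompanies the paper's mixed move. You also omit the purely secondary case; one must verify separately (\refsecunimport) that secondary bifurcations leave the primary differential untouched, which is not automatic since such a move can in principle land on the boundary of an embedding between primary points. The chain-homotopy you sketch for the birth/death of a primary pair is essentially correct and matches \refrsprimaryisom, but the invariance under mixed moves is not a base-change argument on a fixed intersection set: the paper instead decomposes a mixed move into a sequence of primary moves, an identification, and secondary moves (\refmixedinvar), reducing to the cases already handled.
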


We will prove \refinvthcpt\ in the following subsections. The proof carries over
to \cpt ly supported symplectomorphisms on $\R^2$.
`Csi' and `compactly supported' are crucial:

\begin{Remark}
\label{branch apart}
There are $\phi$, $\psi \in \Diff_{dx \wedge dy}(\R^2)$ with \hyp\ fixed points
$x \in \Fix(\phi)$ and $y \in \Fix(\psi)$ which can be joint by a symplectic
isotopy and which have
\begin{enumerate}[(1)]
\item
different number of pairs of intersecting branches,
\item
$H_*(x, \phi) \neq H_*(y, \psi)$.
\end{enumerate}
\end{Remark}

\begin{proof}
For small $\ep >0$, consider the path $  \Phi^\ep : [0,1] \to \Diff_{dx\wedge
dy}(\R^2)$ given by
\beqs
\Phi_\tau^\ep(x,y):=(x+y+ \ep f_\tau(x), y+ \ep f_\tau(x))
\eeqs
with $f_\tau(x):=- \tau x^3 - (1-\tau) x^2 +x$. We have
$\Phi_\tau^\ep(0,0)=(0,0)$ with 
$D\Phi_\tau^\ep(0,0)
= \left(
\begin{smallmatrix}
1+ \ep & 1 \\
\ep & 1
\end{smallmatrix}
 \right)$ as hyperbolic fixed point. Now set $\phi:=\Phi_0^\ep$ and
$\psi:=\Phi_1^\ep$. $\phi$ is the {\em volume preserving H\'enon map} and its
homoclinic tangle is sketched in Figure \ref{quadr cubic} (a): $\phi$ has one
pair of intersecting branches. The tangle of $\psi$ is sketched in Figure
\ref{quadr cubic} (b) and has four pairs of intersecting branches.
We compute
$H_2((0,0), \phi) \simeq \Z$, $H_1((0,0),  \phi) \simeq \Z$ and $ H_n((0,0), 
\phi)=0$ otherwise. But $\psi$ has $H_3 ((0,0), \psi) \neq 0$, thus $H_*((0,0),
\phi) \neq H_*((0,0), \psi)$.
\end{proof}

\begin{figure}[h]
\begin{center}

\input{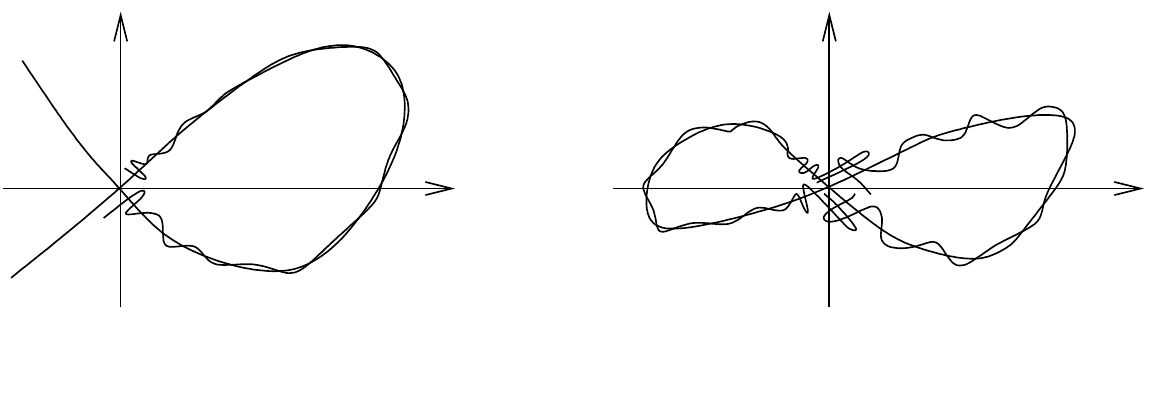_t}

\caption{Homoclinic tangle of the quadratic map (a) and the cubic map (b).}
\label{quadr cubic}

\end{center}
\end{figure}

As an application, we obtain the following statement.

\begin{Corollary}[existence and bifurcation criterion]
Assume the conditions of \refinvthcpt\ for $(M, \om)$, $(x,\phi)$ and
$(y,\psi)$, but $H_*(x,\phi) \neq H_*(y, \psi)$. Then $(x, \phi)$ and $(y,\psi)$
cannot be joint by a csi isotopy.
Thus, if there is a path $(\Phi_\tau)_{\tau \in [0,1]} \in \Diff_{\om}(M)$
between $\phi$ and $\psi$ then
\begin{enumerate}[(1)]
\item
{\em either} $\Phi$ is no isotopy, i.e. there is $\tau_0 \in [0,1]$ where
$x_{\tau_0}$ vanishes or undergoes a bifurcation,
\item
{\em or}, if $\Phi$ is an isotopy, there has to be a pair of branches and some
$\tau_0 \in [0,1]$ where all contractible homoclinic points vanish, i.e. there
are homoclinic bifurcations.
\end{enumerate}
\end{Corollary}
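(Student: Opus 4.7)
The plan is to derive this corollary as the contrapositive of \refinvthcpt. All hypotheses of that theorem regarding $(M,\om)$, $(x,\phi)$ and $(y,\psi)$ are part of the present assumptions, except possibly the existence of a csi isotopy. Since the conclusion $H_*(x,\phi)\simeq H_*(y,\psi)$ is assumed to fail, no csi isotopy between $(x,\phi)$ and $(y,\psi)$ can exist. This yields the first (unnumbered) claim of the corollary immediately.

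For the dichotomy (1)/(2), I would take an arbitrary symplectic path $(\Phi_\tau)_{\tau\in[0,1]}\subset \Diff_\om(M)$ joining $\phi$ to $\psi$ and ask how the csi-isotopy property can break. There are exactly two ways: either $\Phi$ already fails to be an isotopy, or it is an isotopy but fails the csi condition at some parameter.

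In the first case, by definition there is no continuous family of \hyp\ fixed points $x_\tau\in\Fix(\Phi_\tau)$ connecting $x=x_0$ to $y=x_1$. The implicit function theorem, applied to the equation $\Phi_\tau(z)=z$ around any hyperbolic $x_\tau$, guarantees that the continuation $\tau\mapsto x_\tau$ persists locally as long as hyperbolicity is maintained. Hence by a standard ``first bad parameter'' argument there must exist a minimal $\tau_0\in[0,1]$ at which the continuation either ceases to exist (the fixed point vanishes or merges with another one) or loses hyperbolicity (a bifurcation occurs). This is precisely alternative~(1).

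In the second case $\Phi$ is an isotopy but there is some $\tau_0$ at which $(x_{\tau_0},\Phi_{\tau_0})$ is not csi. Unpacking the definition of csi, this means that either (a) $W^u(x_{\tau_0},\Phi_{\tau_0})$ and $W^s(x_{\tau_0},\Phi_{\tau_0})$ are not strongly intersecting at $\tau_0$, i.e.\ some pair of branches has empty intersection and therefore no contractible homoclinic points at all, or (b) some pair of branches has only noncontractible homoclinic points at $\tau_0$. Both sub-cases are subsumed by the statement that on some pair of branches no contractible homoclinic points are present at $\tau_0$, which is alternative~(2). The real content of the corollary lies entirely in \refinvthcpt; the rest is a definitional unpacking together with the implicit function theorem, so I do not expect any genuine obstacle in this step.
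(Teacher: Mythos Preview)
Your proposal is correct and matches the paper's approach: the paper presents this corollary without proof, simply labeling it ``as an application'' of \refinvthcpt, which is exactly the contrapositive reasoning you carry out. Your unpacking of the two failure modes (loss of the hyperbolic continuation versus loss of the csi property) is the natural and intended argument.
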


Now apply the invariance property to assign homology groups to homoclinic loops
of autonomous Hamiltonian systems. Let $H: \R^2 \to \R$ be a compactly
supported, autonomous smooth Hamiltonian function and $X$ its Hamiltonian vector
field. Assume $X$ to have locally a phase portrait like Figure \ref{figureeight}
(a) or (b). Let $\ep >0$ be small and let $Y$ be a smooth nonautonomous vector
field with support in the support of $H$. 

By Melnikov's method \cite{melnikov}, \cite{guckenheimer-holmes}, the time-one
map $\phi_\ep$ of the nonautonomous system $\zdot(t)=X(z(t)) + \ep Y(t, z(t))$
now has a homoclinic tangle instead of a homoclinic loop. The Melnikov function
measures the existence and `width' of the arising homoclinic tangle (for higher
iterates see Rom-Kedar \cite{rom-kedar1, rom-kedar2}). Therefore one knows quite
well how the tangle behaves for $\ep \to 0$. 

In case of Figure \ref{figureeight} (a), the tangle looks roughly like the one
of Figure \ref{hom ex 1}. In case of Figure \ref{figureeight} (b), the tangle
resembles somewhat Figure \ref{hom ex 2}. The isotopy $\ep \mapsto \phi_\ep$
satisfies the requirements of \refinvthcpt\ for small $\ep >0$. The natural
parametrization of the homoclinic loops induces the signs of the Maslov indices:
If the loop winds (counter)clockwise, the arising branches of the unstable and
stable manifold have primary points with positive (negative) Maslov index. We
summarize this as follows.

\begin{Corollary}
\label{homoclinicloops}
Figure-eight homoclinic loops are characterized by
\beqs
H_{\si \cdot 1}= \Z, \qquad H_{\si \cdot  2}= \Z\oplus \Z, \qquad H_{\si \cdot 
3}=\Z
\eeqs
where $\si=1$ for clockwise and $\si= -1$ for counterclockwise parametrization.
In case of the tilted figure-eight homoclinic loop, both cases lead, due to
symmetry, to
\beqs
H_{-2}= H_{-1}=H_{1}=H_2=\Z , \qquad H_{\pm 3}=0.
\eeqs
\end{Corollary}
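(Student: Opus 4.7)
The plan is to reduce this Corollary to the two explicit computations carried out in Section \ref{chapterexamples} by means of the invariance \refinvthcpt, using Melnikov's method to produce a concrete symplectomorphism within the required CSI isotopy class. First I would fix an autonomous Hamiltonian $H$ whose flow locally realises Figure \ref{figureeight}(a) or (b), together with a smooth non-autonomous perturbation $Y$ supported in $\spt H$. For each sufficiently small $\ep>0$ the time-one map $\phi_\ep$ of $\dot z = X_H(z) + \ep Y(t,z)$ has a hyperbolic fixed point $x_\ep$ which is the continuation of the loop vertex, and the Melnikov function, under a standard genericity assumption on $Y$, provides the first-order oscillation of $W^u(x_\ep,\phi_\ep)$ with respect to $W^s(x_\ep,\phi_\ep)$, yielding transverse homoclinic intersections organised in the combinatorial pattern of Figure \ref{hom ex 1} in case (a) and of Figure \ref{hom ex 2} in case (b).

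Next I would check that on a sufficiently small interval $[\ep_1,\ep_2] \subset (0,\ep_0]$ the path $\tau \mapsto \phi_{\tau}$ defines a csi isotopy between $(x_{\ep_1},\phi_{\ep_1})$ and $(x_{\ep_2},\phi_{\ep_2})$ with all primary points transverse: strong intersection holds branch-by-branch once $\ep$ is small and positive because the Melnikov graph crosses zero at the prescribed simple zeros, contractibility of the primary orbits follows from the fact that the tangle is confined to a $C^0$-small neighbourhood of the original loop (which is nullhomotopic in $\R^2$), and transversality of the primary intersections is inherited from the simplicity of the Melnikov zeros. Combined with \refinvthcpt\ (applied in its compactly supported $\R^2$ version mentioned after the theorem), this shows $H_\ast(x_\ep,\phi_\ep)$ is independent of $\ep \in (0,\ep_0]$, so we may assign a well-defined group to the loop itself.

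Having reduced to a single fixed small $\ep$, I would then invoke the two explicit calculations of Section \ref{chapterexamples}: for the figure-eight loop the tangle has the combinatorial type of Figure \ref{hom ex 1}, whose homology was computed to be $\Z$, $\Z\oplus\Z$, $\Z$ in Maslov degrees $-1,-2,-3$; for the tilted figure-eight it has the type of Figure \ref{hom ex 2}, whose homology vanishes in degree $\pm 3$ and is $\Z$ in degrees $\pm 1,\pm 2$. The sign $\si$ is then read off from the Maslov grading: by \refindexprim\ the Maslov index of a primary point equals twice the winding number of the tangent along its associated embedded polygon from $\pti$ to $\xti$, so reversing the orientation of the Hamiltonian loop (which reverses the natural parametrisations of $W^u$ and $W^s$ simultaneously) changes the sign of every Maslov index, giving $\si = +1$ for clockwise loops and $\si = -1$ for counterclockwise loops. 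In the tilted case the loop is symmetric under the involution that exchanges its two lobes and simultaneously reverses orientation, so both signs of primary points appear, producing the groups in degrees $\pm 1, \pm 2$.

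The main obstacle is the second step: establishing that the tangle of $\phi_\ep$ is globally combinatorially equivalent to Figure \ref{hom ex 1} or \ref{hom ex 2}, not merely that it has transverse intersections of the expected local type. The Melnikov function controls only the first-order splitting near the loop, whereas the classification of primary points via \refpositionprim\ requires information about where successive iterates of a primary point sit inside $]p,p^1[_u \cap \, ]p,p^1[_s$. To bypass this, I would work with an action/flux estimate: the symplectic action of a primary point is $O(\ep)$, the gap between consecutive action levels is $O(\ep)$ as well, so for small $\ep$ no further primary points can appear beyond those predicted by the leading-order Melnikov zeros. Once the complete primary list is pinned down, the computations in Section \ref{chapterexamples} apply verbatim and the Corollary follows.
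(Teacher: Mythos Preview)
Your approach is essentially the same as the paper's: the Corollary in the paper is stated as a summary of the paragraph immediately preceding it, which outlines exactly the steps you describe --- Melnikov perturbation of the autonomous loop to obtain a tangle of the type in Figure \ref{hom ex 1} or \ref{hom ex 2}, application of \refinvthcpt\ to the isotopy $\ep \mapsto \phi_\ep$ for small $\ep>0$, and the observation that the loop orientation fixes the sign of the Maslov indices. You are considerably more careful than the paper about the technical verifications (csi, combinatorial equivalence of the tangle, the action argument to rule out extra primary points); the paper simply asserts these on the strength of Melnikov theory and the references to Rom-Kedar \cite{rom-kedar1, rom-kedar2}.
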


In \refinvthcpt, we impose the transversality condition only on the primary
points of $(x,\phi)$ and $(y,\psi)$. This is convenient for applications since
it can be checked easily using \refpositionprim\ and \refprimarymax. But our
proof strategy requires perturbations of $\Phi$.
Thus we have to show that slight perturbations of the start and endpoint
preserve the associated primary Floer homologies.

\begin{Proposition}
\label{start end}
Let $(M,\om)$ be a closed, two-dimensional symplectic \mf\ with genus $g\geq 1$.
Let $\phi \in \Diff_{\om}(M)$ and $x \in \Fix(\phi)$ \hyp. Let $(x, \phi)$ be
csi and all primary points transverse. Then, for all $\phihat\in \Diff_\om(M)$
sufficiently close to $\phi$, holds
\beqs
H_*(x, \phi)=H_*(\xhat, \phihat).
\eeqs
where $\xhat \in \Fix(\phihat)$ is the continuation of $x$.
\end{Proposition}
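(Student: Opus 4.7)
The plan is to exhibit a chain isomorphism between $(C_*(x,\phi), \del)$ and $(C_*(\xhat, \phihat), \del)$ by continuation of each transverse primary intersection point. First I would invoke hyperbolicity of $x$: for $\phihat$ sufficiently $C^k$-close to $\phi$, the continuation $\xhat \in \Fix(\phihat)$ exists, is hyperbolic, and its local (un)stable \mf s depend $C^k$-continuously on $\phihat$. Combined with Palis' $\lam$-lemma, iterating $\phihat$ forwards and backwards a fixed finite number of times gives $C^k$-closeness of $W^i(\xhat, \phihat)$ to $W^i(x, \phi)$ on any prescribed compact region $K \subset M$, for $i \in \{s,u\}$.

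Next I would use the observation from the examples subsection that primary Floer homology is determined by a compact portion of the tangle. By \reffiniteprimary\ the set $\mcHprti$ is finite, and by \reffiniteprimiterate\ only finitely many iterates $q^n$ of each primary $q$ contribute to $\del$. Hence one can choose a compact region $K \subset M$ containing representatives of every primary class together with all di-gons and hearts entering the signs $m(p,q)$ and the proof of $\del^2 = 0$. Since all primary points are transverse, the implicit function theorem applied to $W^u \inter W^s$ supplies, for each primary $p_i \in K$ of $\phi$, a unique transverse continuation $\phat_i \in W^u(\xhat,\phihat) \cap W^s(\xhat,\phihat)$ near $p_i$. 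Preservation of primarity, i.e.\ $]\xhat, \phat_i[_u \cap\, ]\xhat, \phat_i[_s \cap\, \mcH_{[\xhat]} = \emptyset$, follows from $C^0$-closeness of the segments together with the fact that the corresponding intersection set for $\phi$ is finite and transverse, hence each point persists individually without new ones appearing; the csi hypothesis moreover prevents any pair of branches from losing all its contractible homoclinic points.

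On the level of chain complexes, the bijection $p_i \mapsto \phat_i$ preserves (i) the Maslov index, since $\mu(\phat_i, \xhat)$ is computed from local \lagr\ framings along a path joining $\phat_i$ to $\xhat$, all $C^1$-close to their $\phi$-counterparts; (ii) the signs $m(\phat_i, \phat_j)$, which depend only on which branch contains both points and on the orientation comparison of the parametrized connecting segments, both stable under $C^1$-perturbation; and (iii) the cardinality of $\mcMhat(\phat_i, \phat_j)$, since an immersed di-gon in $K$ bounds an embedded topological disk whose boundary arcs deform continuously with $\phihat$, and no new di-gons can appear because the pre-existing transverse intersection pattern in $K$ is preserved. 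Together these give a chain isomorphism and hence $H_*(x,\phi) \simeq H_*(\xhat, \phihat)$.

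The main obstacle is controlling the \emph{global} primary structure under perturbation: one must rule out that a nonprimary homoclinic point suddenly becomes primary, or that an entirely new primary class is created outside the preselected region $K$. This is where the csi assumption together with the alternating Maslov index structure of adjacent primary points (\refprimarymax) becomes essential — the former fixes $K$ once and for all by guaranteeing only finitely many primary classes on each pair of branches, and the latter makes the combinatorial arrangement of primary points along each branch pair rigid, so that creation of a new primary class would force a new transverse intersection of finite-time iterates of the (un)stable \mf s inside $K$, contradicting the $C^k$-closeness established in the first step.
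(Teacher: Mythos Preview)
Your argument follows essentially the same route as the paper: first establish that the primary generator set persists unchanged, then that the signs $m(p,q)$ (equivalently, the boundary operator) persist. The paper packages these as two separate lemmas --- \refprimstab\ for the generators and \refsignequal\ for the differential --- and leans on the earlier combinatorial analysis (\refpertsecunimport, \refsecunimport) rather than a direct $C^0$-stability argument. Your ``frame'' compactness reasoning in the final paragraph is exactly the content of \refprimstab: new primary points can only arise inside a fixed compact frame, and $C^k$-closeness of the (un)stable segments there rules this out.

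One point deserves tightening. In step (iii) and in the primarity argument you assert that ``the pre-existing transverse intersection pattern in $K$ is preserved,'' but the hypothesis only gives transversality at \emph{primary} points; secondary (and non-contractible) intersections may be tangential and could in principle appear or disappear. The clean fix is to work on the universal cover: there a primary $\pti$ satisfies $]\pti,\xti[_u \cap\, ]\pti,\xti[_s = \emptyset$, so the relevant compact segments meet only at their transverse endpoints and this persists; likewise an embedded di-gon between primary $\pti,\qti$ has $]\pti,\qti[_u \cap\, ]\pti,\qti[_s = \emptyset$. The paper's \refsignequal\ handles the same issue by reducing to the case analysis of \refsecunimport\ (any new intersection destroying a di-gon between primary points would force a primary--secondary flip, already excluded). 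With that adjustment your proof is complete and equivalent to the paper's.
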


The proof of \refstartend\ is postponed to \refproofstartend.

\begin{Remark}
\label{comment}
\begin{enumerate}[(1)]
\item
Whereas, in the two-dimensional setting, primary Floer homology can be defined
also for nonsymplectic \diffeo s, invariance only is natural within the class of
symplectomorphisms. 
\item
We conjecture that Hamiltonian diffeomorphisms are naturally csi.
\item
In contrast to classical Lagrangian Floer theory, invariance of primary Floer
homology relies on the nontrivial result of (generical) existence of
intersection points of the Lagrangians.
\end{enumerate}
\end{Remark}

\subsection{Outline of the proof of \refinvthcpt}

In order to prove \refinvthcpt, we have to deal with bifurcations of homoclinic
points. We denote the bifurcation parameter by $\tau$ with tangency at $\tau_0$.
Generically, a homoclinic tangency is simple and the picture looks (after a
suitable symplectic coordinate change) locally like the graph of
$f+C(\tau-\tau_0)$ intersecting the $x$-axis where $f$ is a quadratic,
homogenous and nondegenerate function and $C>0$.

Passing from $\tau<\tau_0$ to $\tau>\tau_0$ (or vice versa), we briefly call a
{\em move} and omit the bifurcation parameter $\tau$. By abuse of notation, we
speak of an $(r,s)$-move if the arising resp.\ vanishing two points are called
$r$ and $s$. In fact, we always have a family of $(r^n, s^n)_{n \in \Z}$-moves.
Given an $(r,s)$-move, there is always an embedded di-gon between $r$ and $s$
since $]r,s[_u\ \cap\ ]r,s[_s\ =\emptyset$. If $r$ and $s$ are primary then they
are adjacent to each other. Moreover, $x \notin [r,s]_u \cup [r,s]_s$ and
therefore $r$ and $s$ always lie on the same branches.

\begin{proof}[Proof of \refinvthcpt] 
First, we perturb the isotopy $\Phi$ in order to obtain the above mentioned
generic bifurcations. This is possible since csi is an open property in
$\Diff_\om(M)$. Since the set of all $\tau \in [0,1]$, for which all primary
points are nontransverse, is discrete we can perturb the isotopy once again
slightly, in order to obtain for all $\tau \in [0,1]$ a transverse primary point
within each pair of intersection branches.

Recall the properties of primary points from \refpositionprim\ and
\refprimarymax. Given a primary $p$, we call $[p,p^1]_u \cup [p,p^1]_s$ together
with the positions of $[p,p^1]_u \cap [p,p^1]_s$ and the immersions (embeddings
on $\Mti$) between adjacent points the {\em frame induced by $p$}. Every primary
equivalence class associated to that pair of branches and different from $p$ has
exactly one representative in the frame induced by $p$. As long as $p$ persist
as primary point its frame allows to observe all primary bifurcations.
Now we cover $[0,1]$ by a finite number of overlapping intervals associated to
persisting primary points.

\vsp

Primary points can only arise resp.\ vanish at certain distinguished parts of the
compact frame segments $[p,p^1]_u$ and $[p,p^1]_s$, see \refprimarymax\ and
Figure \ref{mixed}. Thus there are only finitely many $\tau \in [0,1]$, where
primary points can arise resp.\ vanish. As \refsecunimport\ and \refsignequal\
will show, bifurcations of secondary points either do not affect the chain
complex or are coupled to certain primary bifurcations. Since primary Floer
theory lives within compact segments centered around the fixed point, we can
model the relevant bifurcations of the isotopy as a sequence of moves as in knot
theory.

\vsp

Now let us discuss the moves more detailed.
W.l.o.g. we will assume from now on that, in case of a bifurcation in $p$ at
time $\tau_0$, the tangency $p$ unfolds into two points for $\tau > \tau_0$ and
vanishes for $\tau < \tau_0$. This we briefly call {\em after} resp.\ {\em
before} the bifurcation or move. We call a point {\em involved in a move} if it
is either the homoclinic tangency at time $\tau_0$ or one of the arising
transverse homoclinic points. Persistent transverse primary points $p$ and $q$
are called {\em combinatorically affected by a move} if the value of $m(p,q)$ is
changed by the move. By abuse of notation, we call in this case also the
elements of $\mcM(p,q)$ affected by the move.

There are two possibilities to generate (analogously destroy) a primary point
$p$ by a move:
\begin{enumerate}[a)]
\item
$p$ arises as intersection point.
\item
$p$ was secondary and becomes primary. This phenomenon we call a {\em
primary-secondary flip}, briefly a {\em flip}.
\end{enumerate}
In the latter case, the point does not necessarily need to be involved in the
move itself, cf.\ Figure \ref{mixed}. 
Primary points cannot switch to nontrivial homotopy classes or vice versa due to
`$\dots \cap \mcH_{[x]}$' in the definition of `primary'.

Since there are always two points involved in a bifurcation the following types
of moves are possible:
\begin{enumerate}[a)]
\item
If both arising points are primary the move is called {\em primary}.
\item
If one of the arising points is primary and the other one secondary the move is
called {\em mixed}.
\item
If both arising points are secondary the move is called {\em secondary}.
\end{enumerate}

We note

\begin{Lemma}
\label{flip via mixed}
Let $p$ be not involved itself in a given move, but let $p$ undergo a
primary-secondary flip. Then the move is a mixed one.
\end{Lemma}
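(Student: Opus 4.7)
The plan is to prove the contrapositive: if the move is primary (both arising points primary) or secondary (both arising points secondary), then no transverse primary $p$ not involved in the move can flip. Assume for contradiction that $p$ does flip, say from primary to secondary at time $\tau_0$ (the opposite direction is symmetric under reversal of the bifurcation). Let $r, s$ denote the two points that arise at $\tau_0$. Since before the move $]x,p[_u \cap\ ]x,p[_s \cap \mcH_{[x]} = \emptyset$, and after the move this set is non-empty, and only $r$ and $s$ change in $\mcH$, at least one of $r, s$ must belong to it afterwards.

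I will first argue that \emph{both} $r$ and $s$ lie in $]x,p[_u \cap\ ]x,p[_s$ and that both are contractible. Since $r$ and $s$ are adjacent on both $W^u$ and $W^s$, bounded by a small embedded di-gon $\Delta$ whose interior contains no homoclinic points, no homoclinic point (in particular not $p$) separates them on either manifold; hence one of them lying in $]x,p[_u \cap\ ]x,p[_s$ forces the other to lie there as well. Moreover $\Delta$ is contractible in $M$, which provides a free homotopy between $\gamma_r := [x,r]_u \cup [r,x]_s$ and $\gamma_s := [x,s]_u \cup [s,x]_s$, so $r \in \mcH_{[x]}$ if and only if $s \in \mcH_{[x]}$; consequently both are contractible.

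Next I classify $r$ and $s$ as primary versus secondary. Label them so that $r <_u s$. Using the local geometry of a generic (quadratic) tangency in the two-dimensional symplectic setting together with \refpositionprim\ and the configurations in Figure \ref{mixed}, I verify that $r <_s s$ as well. Then $r \in\ ]x,s[_u \cap\ ]x,s[_s \cap \mcH_{[x]}$, and this makes $s$ secondary. Conversely $]x,r[_u \cap\ ]x,r[_s \subset\ ]x,p[_u \cap\ ]x,p[_s$ was empty before the move, and the only new homoclinic point $s$ is excluded from $]x,r[_u$ since $s >_u r$, so no contractible homoclinic sits in $]x,r[_u \cap\ ]x,r[_s$: $r$ remains primary. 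The arising pair therefore consists of one primary and one secondary point, so the move is mixed — contradicting the hypothesis that it was primary or secondary.

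The delicate step will be the ordering-compatibility $r <_u s \Rightarrow r <_s s$. A priori a quadratic tangency with anti-parallel tangent vectors of $W^u$ and $W^s$ at the tangency point would permit $s <_s r$ paired with $r <_u s$, and a direct calculation in that configuration makes both arising points primary, which would break the lemma. Ruling this scenario out requires a case-by-case inspection of the finitely many generic local pictures of a tangency whose unfolding lands the pair $r, s$ inside $]x,p[_u \cap\ ]x,p[_s$ of some uninvolved primary $p$; the catalogue in Figure \ref{mixed}, together with the csi assumption constraining which branch pairs can participate, is what I expect to close this gap.
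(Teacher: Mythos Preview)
Your approach is more conceptual than the paper's, which simply lists (in Figure~\ref{prim_second}) the finitely many local configurations that can turn a primary $p$ secondary and observes by inspection that each is mixed. Your reduction --- both $r,s$ land in $]\xti,\pti[_u\cap\,]\xti,\pti[_s$, both are contractible, and then one is primary while the other is secondary --- is sound up to the step you yourself flag.

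The gap is real, but the remedy you propose is not right. Appealing to Figure~\ref{mixed} is essentially the paper's enumeration in disguise (and that figure classifies \emph{mixed} moves, so invoking it to decide whether a move is mixed is circular); the csi hypothesis plays no role here. What actually rules out the ``anti-parallel'' case is a pure planarity argument in the universal cover. Since $\pti$ was primary just before the bifurcation, the arcs $[\xti,\pti]_u$ and $[\xti,\pti]_s$ are embedded and bound a $2$-gon. Parametrise $[\xti,\pti]_u$ as the segment $[0,3]$ on the real axis with $\xti=0,\ \rti=1,\ \sti=2,\ \pti=3$. If on $[\xti,\pti]_s$ the order were $\xti,\sti,\rti,\pti$, then the two sub-arcs of $[\xti,\pti]_s$ lying in one half-plane connect $0$ to $2$ and $1$ to $3$ respectively; since $0<1<2<3$ these endpoints are linked, so the two sub-arcs must cross, contradicting that $\Wti^s$ is embedded. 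Hence the orderings agree, $r\in\,]\xti,\sti[_u\cap\,]\xti,\sti[_s$ makes $s$ secondary, while $]\xti,\rti[_u\cap\,]\xti,\rti[_s$ contains neither $s$ nor any old contractible point, so $r$ is primary. With this argument inserted, your proof is complete and gives a cleaner reason than the paper's picture-check for why the move must be mixed.
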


\begin{proof}[Proof of \refflipviamixed]
Consider the lifted tangle.
The situation is sketched in Figure \ref{prim_second}. In (i), $p$ is primary
before the move. In (ii -- iv), the possible types of moves are listed which
turn $p$ secondary --- all of them are mixed.
\end{proof}

\begin{figure}[h]
\begin{center}

\input{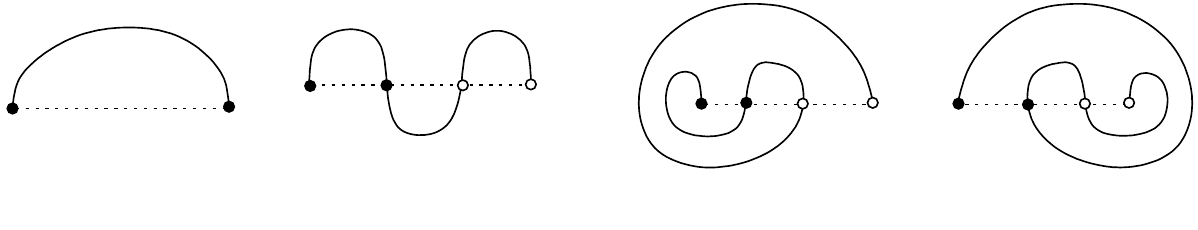_t}

\caption{Causes for a primary-secondary flip.}
\label{prim_second}

\end{center}
\end{figure}

Having \refflipviamixed\ and Figure \ref{prim_second} in mind, we conclude the
following changes of the set of primary points under the different types of
moves.

\begin{Corollary}
\label{move char}
\begin{enumerate}[(1)]
\item
A primary move generates two primary points and does not flip any.
\item
A mixed move generates one primary point, but flips a certain number of primary
points secondary.
\item
A secondary move neither generates primary points nor can flip some of them
secondary, i.e. the set of primary points stays untouched.
\end{enumerate}
\end{Corollary}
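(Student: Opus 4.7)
The plan is to extract Corollary \ref{move char} directly from the definitions of the three move types together with \refflipviamixed. The "generation" half of each statement is immediate from the definitions: by construction a primary (resp.\ mixed, resp.\ secondary) move is one in which the two arising points are both primary (resp.\ one primary and one secondary, resp.\ both secondary), so the count of newly generated primary points is $2$, $1$, and $0$ respectively, and no further argument is needed.

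For the "no-flip" assertions in (1) and (3), I would argue by contradiction via \refflipviamixed. Suppose we are performing a primary move and some primary point $p$ that is not one of the two points involved in the bifurcation undergoes a primary-secondary flip at the bifurcation parameter $\tau_0$. Then \refflipviamixed\ forces the move to be mixed, contradicting the hypothesis that it is primary. The identical contradiction handles (3), ruling out flips during a secondary move. Note that the two points directly involved in a primary or secondary move either appear or disappear as transverse intersection points and so are not persistent primary points whose type could "flip"; hence \refflipviamixed\ exhausts every possible flip candidate.

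For (2), the content is the positive possibility of flips under a mixed move. Here I would refer to Figure \ref{prim_second}: panels (ii), (iii), and (iv) exhibit the geometric configurations in which a mixed bifurcation at $\tau_0$ causes a previously primary point $p$ to become secondary, and (reading the bifurcation in the opposite $\tau$-direction) the configurations in which a secondary $p$ becomes primary. The exact count of such flips triggered by a given mixed move is not universal; it depends on how many persistent primary equivalence classes in the affected frame happen to have representatives lying in the di-gon region associated to the new pair $(r,s)$ at the moment $\tau_0$, which is the reason for the deliberately vague "a certain number" in the statement.

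The only subtlety to keep track of is the direction of the bifurcation parameter: a "flip" encompasses both primary-to-secondary and secondary-to-primary transitions. Because the proof of \refflipviamixed\ is purely combinatorial and based on the local picture at $\tau_0$, it is symmetric in the two sides of $\tau_0$ and therefore applies equally well to both directions, which is all that is required to close (1) and (3). Beyond this symmetry, I do not foresee any substantial obstacle; essentially all of the work has already been absorbed into \refflipviamixed, so the corollary is a bookkeeping consequence.
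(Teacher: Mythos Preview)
Your proposal is correct and matches the paper's own justification: the paper simply states that the corollary follows ``having \refflipviamixed\ and Figure \ref{prim_second} in mind,'' and you have spelled out exactly that deduction --- the generation counts are definitional, and the contrapositive of \refflipviamixed\ rules out flips in primary and secondary moves.
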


\refmovechar\ characterizes how the different types of moves affect the
generator set of the chain groups. We will inquire about the potential changes
of the boundary operator in the next subsections. It will be proven in
\refsecunimport, \refrsprimaryisom, \refprimmoveinvar\ and \refmixedinvar\ that
all three kinds of moves leave the homology invariant which proves \refinvthcpt.
\end{proof}

\subsection{Invariance under secondary moves}

For simplicity, we work with the lifted homoclinic tangle on the universal
cover. According to \refmovechar, the generator set of the chain complex stays
unchanged under secondary moves and we will show now that this is also true for
the boundary operator.

\begin{figure}[h]
\begin{center}

\input{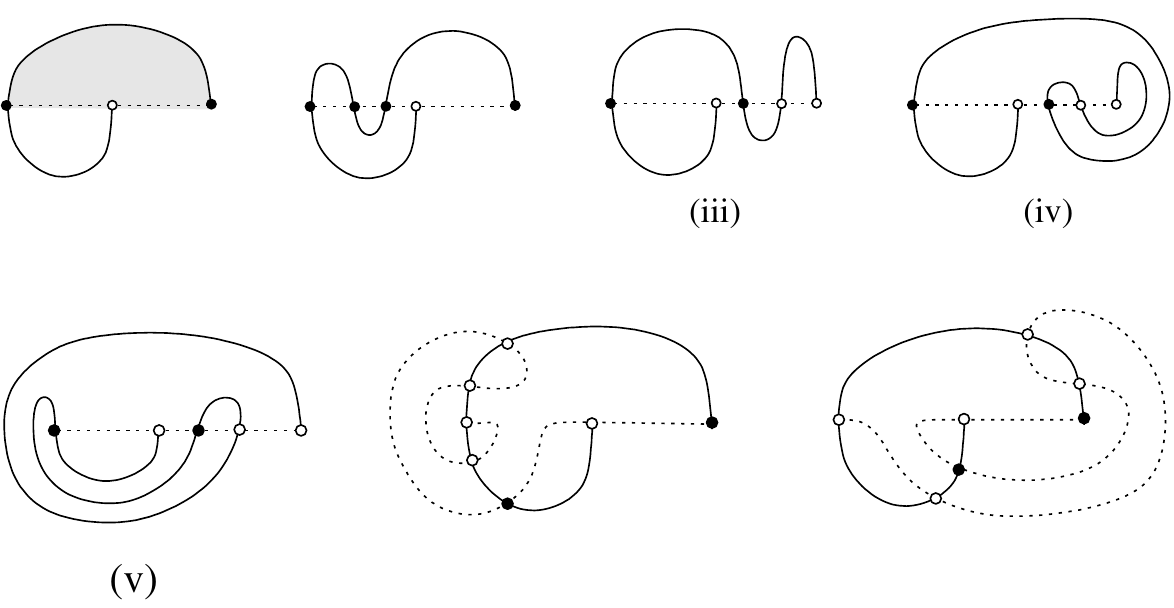_t}

\caption{The effect of moves on an embedding between primary points $p$ and $q$.}
\label{rs perturb fish}

\end{center}
\end{figure}

\begin{Proposition}
\label{sec unimport}
Secondary moves do not affect embeddings between primary points.
\end{Proposition}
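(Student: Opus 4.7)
The plan is to work on the universal cover $\Mti$, where \refpropclassifind\ guarantees that any $v \in \mcM(\pti, \qti)$ between primary points is in fact an embedding whose combinatorial type is determined purely by the relative positions of $\pti$, $\qti$, and $\xti$ along the branches of $\Wti^u$ and $\Wti^s$ (as classified in Figure \ref{all immersions}). It therefore suffices to show that a secondary move neither alters these relative positions nor inserts/removes an obstruction to such an embedding, so that both the existence and the sign $m(\pti,\qti)$ of the embedding are preserved.

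First I would analyze a secondary move locally. At the tangency time $\tau_0$, the manifolds $\Wti^u$ and $\Wti^s$ meet nontransversely in a single point which, on the other side of $\tau_0$, unfolds into two transverse secondary points $\rti, \sti$ that are adjacent along both branches and bound a small embedded di-gon. By definition of \emph{secondary}, we have $\rti, \sti \in\ ]\xti, \pti''[_u \cap\ ]\xti, \pti''[_s \cap \mcH_{[x]}$ for some primary $\pti''$, so the tangency is strictly interior to the primary frame, and by \refmovechar\,(3) the set of primary points is untouched by the move.

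Next I would fix primary $\pti, \qti$ and analyze the position of the newborn di-gon $[\rti, \sti]_u \cup [\rti, \sti]_s$ with respect to a candidate embedding $v \in \mcM(\pti, \qti)$, following the cases of Figure \ref{rs perturb fish}. Either the little di-gon lies entirely in $\Ext(v)$, so $v$ is manifestly unaffected; or entirely in $\Int(v)$, in which case the boundary segments $[\pti, \qti]_u$ and $[\pti, \qti]_s$ are unchanged and $v$ persists as the same embedding; or it straddles one of those boundary segments through a small local isotopy bump. In the straddling case the boundary of $v(D)$ is only modified by a controlled detour localized near the tangency, and I would argue that this detour does not change the combinatorial shape from \refpropclassifind: no primary point is created between $\pti$ and $\qti$ by \refmovechar\,(3), and the bump lives in a neighborhood disjoint from $\xti$, so $v$ extends through $\tau_0$ as a continuous family of embeddings. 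The sign $m(\pti, \qti)$, which depends only on the branch $W_{pq}$ containing both $\pti$ and $\qti$ and on the orientation $o_{pq}$, is preserved by this local deformation; reversing the role of before/after handles the symmetric case where $\rti, \sti$ disappear.

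The main obstacle will be the straddling case. Here one must verify that the local deformation cannot, for instance, pull a boundary segment across $\xti$ or across another primary point (which would change the shape type in Figure \ref{all immersions}). This is however ruled out by two observations: the secondary tangency is a localized $C^1$-perturbation of $\Wti^u$ and $\Wti^s$ so it cannot move the fixed point or any primary intersection; and \refmovechar\,(3) forbids any primary point from appearing or disappearing during a secondary move. Together these force the shape classification to persist, so $m(\pti, \qti)$ and hence the boundary operator are unaffected.
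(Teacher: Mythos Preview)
Your trichotomy misses the one case that actually matters. You consider the newborn di-gon lying in $\Ext(v)$, lying in $\Int(v)$, or straddling \emph{one} boundary segment; but the dangerous configuration is when $[\rti,\sti]_u \subset\ ]\pti,\qti[_u$ \emph{and} $[\rti,\sti]_s \subset\ ]\pti,\qti[_s$ simultaneously, i.e.\ when the bump of $\Wti^s$ crosses precisely the segment $[\pti,\qti]_u$ (or vice versa). Then $\rti,\sti \in\ ]\pti,\qti[_u \cap\ ]\pti,\qti[_s$, and by \refpropclassifind\ there is no longer any element of $\mcM(\pti,\qti)$: the embedding is genuinely destroyed, not merely deformed. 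Your sentence ``the boundary segments $[\pti,\qti]_u$ and $[\pti,\qti]_s$ are unchanged and $v$ persists as the same embedding'' is simply false in this configuration, and invoking \refmovechar\,(3) does not help --- it tells you $\pti,\qti$ stay primary, but it does not prevent $]\pti,\qti[_u \cap\ ]\pti,\qti[_s$ from becoming nonempty.

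The paper's proof is precisely an argument that this bad configuration cannot arise from a secondary move. It runs through the combinatorial positions of $\rti,\sti$ relative to $\pti,\qti,\xti$ (Figure~\ref{rs perturb fish}) and checks that whenever $\{\rti,\sti\}=\ ]\pti,\qti[_u \cap\ ]\pti,\qti[_s$, either the move is primary or mixed (cases (ii)--(v)), or --- in the cases (vi), (vii) where $\rti,\sti$ are both secondary --- the arc carrying the bump must already have crossed $]\pti,\xti[_s$, producing intersection points $\rti',\sti'$ that render $\pti$ non-primary. In other words, the contradiction is obtained by exhibiting witnesses $\rti',\sti' \in\ ]\pti,\xti[_u \cap\ ]\pti,\xti[_s$ forced by the secondary nature of $\rti,\sti$, and this is a genuinely combinatorial case check against Figure~\ref{all immersions} that your continuity argument does not supply. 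To repair your proof you would need to insert exactly this analysis for the missing case.
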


\begin{proof}
We argue by contradiction: 
Let $u$ be an embedding between primary points $p$ and $q$ with $\mu(p,q)=1$.
Consider an $(r,s)$-move such that $\{r,s\}=\ ]p,q[_u \ \cap \ ]p,q[_s$.  We
show: If $r$ and $s$ are secondary then the $(r,s)$-move already flipped either
$p$ or $q$ secondary before $r$ and $s$ could arise.
The proof is tedious, but elementary. We just have to check for the embeddings
of Figure \ref{all immersions} all combinatorial possibilities of $(r,s)$-moves
affecting the boundary $[p,q]_u \cup [p,q]_s$ such that $\{r,s\}=\ ]p,q[_u \
\cap \ ]p,q[_s$.

We only prove the assertion exemplarily in the case of Figure \ref{all
immersions} (i) with $\mu(p,q)=(-1,2)$ which is resketched in Figure \ref{rs
perturb fish} (i). The strategy and result for the other cases in Figure
\ref{all immersions} are the same.

Consider Figure \ref{rs perturb fish} (i) and the boundary $[p,q]_u \cup
[p,q]_s$ of the embedding between $p$ and $q$. $]p,q[_u\backslash \{x\}$
consists of the two connected components $]p,x[_u$ and $]q,x[_u$. Since $r$ and
$s$ always lie in the same branch we have to distinguish the cases $r$, $s \in\
]p,x[_u$ (see Figure \ref{rs perturb fish} (ii), (vi), (vii)) and $r$, $s \in\
]q,x[_u$ (see Figure \ref{rs perturb fish} (iii), (iv), (v)). Moreover, we have
to distinguish if $p$ is connected within $W^s$ first to $s$ (see Figure \ref{rs
perturb fish} (ii), (iii)) or to $r$ (see Figure \ref{rs perturb fish} (iv) --
(vii)). The cases (iv) and (v) on the one hand and (vi) and (vii) on the other
hand are basically the same. 

We deduce that (ii) is a primary move and that (iii), (iv) and (v) are mixed
ones. In (vi) and (vii) the points $r$ and $s$ are both secondary. But before
the move, starting in the situation of (i), generates the intersection points
$r$ and $s$ in (vi) and (vii) it has to pass through $]p,x[_s$ generating the
intersection points $r'$ and $s'$ which yields a mixed $(r',s')$-move flipping
$p$ secondary. 
\end{proof}

\refsecunimport\ and \refmovechar\ imply the invariance of primary Floer
homology under secondary moves.
Moreover, we note that, according to the proof of \refsecunimport, a mixed move
affecting an embedding between two primary points always flips one of them
secondary.

\subsection{Invariance under primary moves}

Now we prove the invariance of primary Floer homology under primary moves. The
proof generalizes ideas of Floer \cite{floer3} and de Silva \cite{de silva}.

\vsp

In Figure \ref{rs primary}, the two possibilities for primary moves are sketched
which are deduced from Figure \ref{all immersions} (up to symmetries).

\begin{figure}[h]
\begin{center}

\input{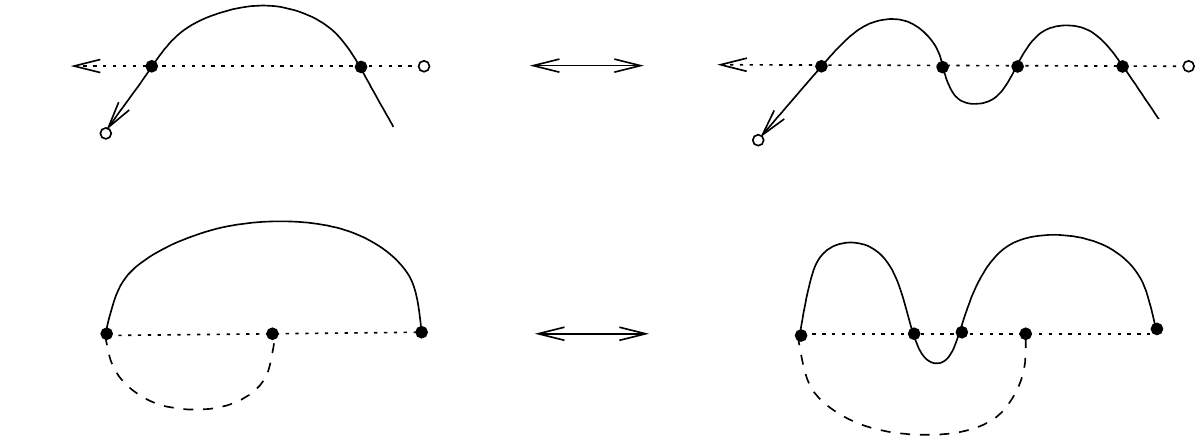_t}

\caption{The primary $(r,s)$-move.}
\label{rs primary}

\end{center}
\end{figure}

Given $u \in \mcM(p,q)$, consider a primary $(r,s)$-move such that for some $m
\in \Z$ the familiy element $(r^m,  s^m)$ affects $u$, i.e. $[r^m,s^m]_i \subset
\ ]p,q[_i$ for $i \in \{u,s\}$. Then there is no $n \in \Z^{\neq m}$ such that
$(r^n, s^n)$ affects $u$: Let the symplectomorphism be $W$-orientation
preserving and w.l.o.g. $m=0$.
Since $x \notin\ [p,q]_u \cap\ [p,q]_s$ at least one of the points $p$, $q$ lies
in the same branch as $r$ and $s$ and w.l.o.g. let it be $p$. If there would be
an $n \neq 0$ with $[r^n, s^n]_i \subset \ ]p,q[_i$ then there is an iterate
$p^k$ with $p^k \in [r, r^n]_u \cap\ [r, r^n]_s \subset \ ]p,q[_u \ \cap\
]p,q[_s$. But then already $p^k\in\ ]p,q[_u \ \cap\ ]p,q[_s$ before the primary
$(r,s)$-move took place. Thus $u$ is no embedding.
For a $W$-orientation reversing symplectomorphism consider its square.

\vsp

We denote by $\lkl\cdot, \cdot \rkl\ : \ \mcHpr \x \mcHpr \to \{0,1\}$
the {\em Kronecker symbol} and extend it to the chain complex by linearity.

For an isotopy $\Phi$ which has a primary tangency at $\tau_0$ and which
displays a primary $(r,s)$-move for $\tau \in [\tau_0 -\ep, \tau_0 +\ep]$ we
abbreviate 
$\mcHpr:=\mcHpr(\Phi_{\tau_0 - \ep}, x_{\tau_0-\ep})$ and identify
$\mcHpr':=\mcHpr(\Phi_{\tau_0 + \ep}, x_{\tau_0+\ep})= \mcHpr \cup \{r^n, s^n
\mid n \in \Z\}$. Moreover set
$({\mathfrak C}_*, \dd):=({\mathfrak C}_*(x_{\tau_0-\ep}, \Phi_{\tau_0-\ep}),
\dd_{x_{\tau_0-\ep}, \Phi_{\tau_0-\ep}})$ and 
$({\mathfrak C}_*', \dd'):= ({\mathfrak C}_*(x_{\tau_0+\ep}, \Phi_{\tau_0+\ep}),
\dd_{x_{\tau_0+\ep}, \Phi_{\tau_0+\ep}})$. Mark signs {\em after} the move by a
prime, i.e. $m'(\cdot, \cdot)$. 
Given a primary $(r,s)$-move, we define the {\em projection}
\beqs
\pi: \mathfrak C_*' \to \mathfrak C_*, \quad
\pi(p)= p - \sum_{n \in \Z} \lkl p,r^n\rkl r^n -\lkl p,s^n\rkl s^n.
\eeqs
The inclusion $\mcHpr \hookrightarrow \mcHpr'$ induces the homomorphism $i:
{\mathfrak C}_* \to {\mathfrak C}_*'$. $\pi$ and $i$ commute with the
$\Z$-action on the chain complexes.
W.l.o.g. assume for the remaining subsection that for a primary $(r,s)$-move
holds $\mu(r,s)=1$ as sketched in Figure \ref{rs primary}.

\begin{Proposition}
\label{fish formula}
For all primary $p$, $q \in \mcHpr$ and all primary $(r,s)$-moves holds
\beqs
m(p,q)=m'(i(p),i(q))- \sum_{n \in \Z}m'(i(p),s^n)m'(r^n,s^n) m'(r^n,i(q)).
\eeqs
\end{Proposition}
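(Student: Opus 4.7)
The plan is to relate, on the universal cover $\Mti$, the embeddings in $\mcM(p,q)$ before the $(r,s)$-move with those in $\mcM(i(p), i(q))$ after the move, and to interpret the difference as a sum over compositions through the newly created pair $(r^n, s^n)$. By \refpropclassifind\ every element of $\mcM(p,q)$ (and of $\mcM(i(p), i(q))$) between primary points is an embedding, so one can compare them geometrically rather than through abstract moduli-space counting.

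First I will localize the move: since the primary tangency at $\tau_0$ is simple, in suitable symplectic coordinates the two branches of $W^u$ and $W^s$ that unfold into $r,s$ are graphs whose difference is of the form $f(x)+C(\tau-\tau_0)$ with $f$ quadratic and nondegenerate, and the neighbourhood in which $r,s$ emerge contains no other primary points. Combined with the observation following the definition of $\pi$, that at most one iterate $(r^m, s^m)$ of the move-family can lie inside the image of a given embedding $v \in \mcM(p,q)$, this reduces the whole question to a local statement near a single pair $r^n, s^n$.

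Next I will split $\mcM(i(p), i(q))$ into two classes: (1) embeddings $v'$ whose image avoids all the pairs $\{r^n, s^n\}$, and (2) those whose image contains some pair $\{r^n, s^n\}$. Embeddings of class (1) are in natural bijection with $\mcM(p,q)$: such a $v'$ may be parametrically deformed back across $\tau_0$ to yield an element of $\mcM(p,q)$, and because the bifurcation happens away from $v'(D)$ the jump-direction data entering $m$ are unchanged, so class (1) contributes precisely $m(p,q)$ to $m'(i(p), i(q))$. For an embedding $v'$ of class (2), the freshly created segment $[r^n, s^n]_u$ lies inside $v'(D)$; applying the cutting procedure of \refprimcutth\ (with $r^n, s^n$ used as successive cutting points) decomposes $v'$ uniquely as $v_2 \,\#\, (\text{di-gon}) \,\#\, v_1$ with $v_1 \in \mcM(i(p), s^n)$, central piece the embedded di-gon in $\mcM(r^n, s^n)$, and $v_2 \in \mcM(r^n, i(q))$. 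Conversely, two applications of \refgluing\ reconstruct $v'$ from such a triple, giving a bijection between class (2) and triples indexed by $n$.

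The final step is the sign accounting. For a triple $(v_1, \text{di-gon}, v_2)$ the three signs $m'(i(p), s^n)$, $m'(r^n, s^n)$, $m'(r^n, i(q))$ each record whether the jump direction on the shared branch agrees with the parametrization orientation induced on the corresponding sub-embedding. Comparing these three pieces of orientation data with the single datum producing $m'(i(p), i(q))$ for the glued embedding yields, exactly as in \refsignskewsym, a uniform extra factor of $-1$. Summing the class-(2) contributions over $n$ and equating the total with $m'(i(p), i(q)) - m(p,q)$ produces the claimed formula; the $W$-orientation reversing case is reduced to the orientation preserving one by squaring, as in \refexistenceqq. The main obstacle is this uniform $-1$: it requires checking both local shapes of Figure \ref{rs primary} together with the possible placements of $p,q$ on the branches carrying $r^n, s^n$, but the computation in each case is essentially that underlying \refsignskewsym\ and yields the same outcome throughout.
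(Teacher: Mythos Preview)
Your decomposition of $\mcM(i(p),i(q))$ into classes (1) and (2) is where the argument breaks. In the situation that actually matters --- the one Figure \ref{rs primary} depicts --- the primary $(r^n,s^n)$-move creates $r^n,s^n$ as points of $]p,q[_u\cap\,]p,q[_s$. Since elements of $\mcM(i(p),i(q))$ between primary points are embeddings (\refpropclassifind), the appearance of these interior intersections means $\mcM(i(p),i(q))=\emptyset$ after the move. Thus your class (2) is always empty: there is no embedding $v'\in\mcM(i(p),i(q))$ whose image ``contains'' the pair, and hence nothing to cut with \refprimcutth\ (which in any case treats hearts of index~$2$, not di-gons). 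For the same reason your class (1) is \emph{not} in bijection with $\mcM(p,q)$: in the affected case $\mcM(p,q)\neq\emptyset$ before, yet class~(1)$=\mcM(i(p),i(q))=\emptyset$ after.

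The paper's argument runs in the opposite direction. One starts from the \emph{before} embedding $u\in\mcM(p,q)$ and asks whether it is combinatorially affected, i.e.\ whether $[r^n,s^n]_i\subset\,]p,q[_i$ for both $i\in\{u,s\}$ for some (necessarily unique) $n$. If not, $u$ persists as an element of $\mcM(i(p),i(q))$ with the same sign, and one checks that $\mcM(i(p),s^l)=\emptyset$ or $\mcM(r^l,i(q))=\emptyset$ for every $l$, so the correction sum vanishes. If $u$ \emph{is} affected, then $m'(i(p),i(q))=0$, but the region $u(D)$ is cut by the newborn di-gon into three embedded pieces lying in $\mcM(i(p),s^n)$, $\mcM(r^n,s^n)$, $\mcM(r^n,i(q))$. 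A direct reading of the jump-direction signs in Figure \ref{rs primary} gives $m(p,q)=m'(i(p),s^n)=m'(r^n,i(q))=-m'(r^n,s^n)$, whence the triple product equals $-m(p,q)$ and the formula follows. Your invocation of \refsignskewsym\ does not substitute for this check: that lemma concerns the two cuttings of a heart of index~$2$, not the comparison of a single di-gon with its three-piece replacement.
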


\begin{proof}
We know that the primary $(r,s)$-move changes $\mcHpr$ to $\mcHpr'=\mcHpr \cup
\{r^n, s^n \mid n \in \Z\}$. Figure \ref{rs primary} sketches the possible
geometric positions of $p$, $q$, $r$, $s$. Recall that for primary points $p$
and $q$ an embedding $u \in \mcM(p,q)$ is combinatorically affected by the
primary $(r,s)$-move if and only if there is exactly one $n \in \Z $ such that
$]p,q[_u\ \cap \ ]p,q[_s = \{r^n,s^n\}$ after the move.

If the embedding is combinatorically affected by $r^n$ and $s^n$ then it
corresponds under the move to three embeddings between $r^n$ and $q$, $r^n$ and
$s^n$ and $p$ and $s^n$. Using some gluing construction within a small
neighbourhood $U$ containing the move, we obtain $\mcMhat(p,q) \simeq
\mcMhat(r^n,q) \x \mcMhat(r^n,s) \x \mcMhat(p,s^n)$. Counting with orientation,
we find $ m(p,q)=m'(i(p),s^n)=m'(r^n,i(q))=-m'(r^n,s^n)$ and thus
$m(p,q)=-m'(i(p),s^n)m'(r^n,s^n) m'(r^n,i(q))$. For $k \in \Z^{\neq n}$ the
embedding $u \in \mcM(p,q)$ stays unchanged and $m'(i(p),s^k)m'(r^k,s^k)
m'(r^k,i(q))=0$.

If $u$ is not combinatorically affected by the move then either $\mcMhat(p,s^l)=
\emptyset$ or $\mcMhat(r^l,q)=\emptyset$ for all $l \in \Z$. In this case we
have $-m'(i(p),s^l)m'(r^l,s^l) m'(r^l,i(q))=0$ and $m(p,q)=m'(i(p),i(q))$, 
thus altogether $m(p,q)=m'(i(p),i(q))-\sum_{n \in \Z} m'(i(p),s^n)m'(r^n,s^n)
m'(r^n,i(q))$.
\end{proof}

Now we express the boundary operator $\mathfrak d$ in terms of $\dd'$.

\begin{Lemma}
\label{compare d}
\begin{align*}
& \dd p = \pi(\dd'i(p) - \sum_{n \in \Z} m'(i(p),s^n)m'(r^n,s^n) \dd'r^n) \quad
\mbox{for }\mu(i(p),r)=0, \\
& \dd p = \pi(\dd'i(p)) \quad \mbox{otherwise}.
\end{align*}
\end{Lemma}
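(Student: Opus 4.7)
The plan is to reduce the identity directly to \reffishformula\ by expansion, using that the projection $\pi$ annihilates every $r^n$ and $s^n$ and commutes with the grading decomposition of $\dd'$.

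I would begin by expanding $\dd p=\sum_{q\in\mcHpr,\,\mu(q)=\mu(p)-1} m(p,q)\,q$ and substituting the fish formula for each $m(p,q)$; this produces a main term $\sum_q m'(i(p),i(q))\,q$ together with the correction $-\sum_{q,n} m'(i(p),s^n)\,m'(r^n,s^n)\,m'(r^n,i(q))\,q$.

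Next I would decompose $\dd' i(p)$ (and, in the first case, also each $\dd' r^n$) according to the splitting $\mcHpr'=\mcHpr\cup\{r^n,s^n\mid n\in\Z\}$. The key observation is that, since the move is primary with $\mu(r,s)=1$ (cf.\ Figure \ref{rs primary}), one has $\mu(r^n)=\mu(r)$ and $\mu(s^n)=\mu(r)-1$ for all $n$, so only two possible values of $\mu(p)$ produce any $r^n$ or $s^n$ terms in $\dd' i(p)$. In the case $\mu(i(p),r)=0$, i.e.\ $\mu(p)-1=\mu(s)$, the decomposition reads
\beqs
\dd' i(p) = \sum_q m'(i(p),i(q))\,i(q) + \sum_n m'(i(p),s^n)\,s^n,
\eeqs
\beqs
\dd' r^n = \sum_q m'(r^n,i(q))\,i(q) + \sum_k m'(r^n,s^k)\,s^k.
\eeqs
Substituting into $\dd' i(p) - \sum_n m'(i(p),s^n)\,m'(r^n,s^n)\,\dd' r^n$ and then applying $\pi$ (which kills all $r^n$ and $s^n$), the $s^n$-contributions drop out and the surviving $i(q)$-terms reassemble precisely into the double sum obtained in the first step, yielding the first identity.

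For the second case, $\mu(i(p),r)\neq 0$, I would argue by grading alone: a nonzero summand $m'(i(p),s^n)\,m'(r^n,s^n)\,m'(r^n,i(q))$ forces $\mu(p)=\mu(s)+1=\mu(r)$, contradicting the hypothesis. Thus the fish-formula correction vanishes termwise, so $m(p,q)=m'(i(p),i(q))$; since any $r^n$ or $s^n$ term that might appear in $\dd' i(p)$ is annihilated by $\pi$, we conclude $\dd p=\pi(\dd' i(p))$. The only delicate point is the grading bookkeeping between $\mu(p)-1$, $\mu(r)$ and $\mu(s)$; once this is organized the computation is formal and uses nothing beyond \reffishformula\ and the $\Z$-equivariance of $\pi$.
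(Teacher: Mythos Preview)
Your proposal is correct and follows essentially the same route as the paper: expand via \reffishformula, decompose $\dd' i(p)$ and $\dd' r^n$ into old-generator and $\{r^n,s^n\}$-contributions, and then apply $\pi$ to kill the latter. The only cosmetic difference is that the paper writes the expansions ``formally'' (including $r^n$- and $s^n$-terms regardless of grading) and packages the comparison via the Kronecker pairing $\lkl \dd p, q\rkl = \lkl \dd' i(p) - \sum_n m'(i(p),s^n)m'(r^n,s^n)\dd' r^n, i(q)\rkl$, whereas you do the grading case split first; your treatment of the case $\mu(i(p),r)\neq 0$ is in fact slightly more explicit than the paper's.
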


\begin{proof}
We compute formally
\begin{align*}
& \dd' i(p) = \sum_{\stackrel{\mu(i(p), \qti)=1}{\qti \notin \{r^n, s^n\mid n
\in \Z \}}} m'(i(p), \qti) \qti + \sum_{n \in \Z}m'(i(p), r^n)r^n + \sum_{n \in
\Z}m'(i(p), s^n)s^n , \\
& \dd' r^m = \sum_{\stackrel{\mu(r^m, \qti)=1}{\qti \notin \{s^n \mid n \in \Z
\}}} m'(i(p), \qti) \qti + \sum_{n \in \Z}m'(r^m, s^n)s^n
\end{align*}
and, making use of the Kronecker symbol via $\lkl \del p,q\rkl =m(p,q)$ etc., we
rewrite \reffishformula\ as
\begin{equation}
\label{d kron}
\lkl \dd  p,q\rkl  = \lkl \dd 'i(p)  - \sum_{n \in \Z}  m'(i(p),s^n)m'(r^n,s^n)
\dd 'r^n, i(q)\rkl .
\end{equation}
Applying $\pi$ to $\dd' i(p)$ and $\dd' r^m$ kills all $r^n$- and $s^n$-terms.
We end up exactly with those terms which occur (maybe multiplied by
$m'(i(p),s^n)m'(r^n,s^n)$) in \refdkron. So we obtain $ \dd p = \pi(\dd'i(p) -
\sum_{n \in \Z} m'(i(p),s^n)m'(r^n,s^n) \dd'r^n)$ for $\mu(i(p), r)=0$ and $ \dd
p = \pi(\dd'i(p))$ otherwise.
\end{proof}

Now note the following technical statement:

\begin{Lemma}
\label{m kron}
Consider a primary $(r,s)$-move. Then for $k$, $l \in \Z$ holds $m'(r^k,s^l)=0$
for $k \neq l$.
\end{Lemma}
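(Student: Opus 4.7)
The plan is to reduce the lemma, via Proposition~\ref{prop classif ind 1} together with the $\Z$-invariance of the Maslov index from Remark~\ref{mu Z-comp}, to showing that $\mcM(r^k,s^l)=\emptyset$ for every $k\neq l$. Since $\mu(r^k,s^l)=\mu(r,s)=1$ for all $k,l\in\Z$, any element of $\mcM(r^k,s^l)$ is in fact an embedded di-gon, and this is equivalent to the emptiness $]r^k,s^l[_u\,\cap\,]r^k,s^l[_s=\emptyset$ as subsets of $\mcH$. So the rest of the argument is devoted to showing that this intersection is non-empty whenever $k\neq l$, which then forces $m'(r^k,s^l)=0$ by definition.

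First I would pass to the universal cover and pin down the ordering of the family $\{r^n,s^n\}_{n\in\Z}$ on each of $W^u$ and $W^s$. Because the primary $(r,s)$-move is a simple quadratic tangency unfolding at $t$, the new primary points $r$ and $s$ are adjacent in the sense of Remark~\ref{primary max}(1), and the local unfolding pins down the relative order of $r,s$ on each of $W^u$ and $W^s$. Iterating by $\phi$ places each pair $r^n,s^n$ near the corresponding tangency iterate $t^n$, and under the WLOG orientation conventions of Remark~\ref{primary max}(1) one obtains on both $W^u$ and $W^s$ a uniform monotone-in-$n$ pattern $\ldots,r^n,s^n,r^{n+1},s^{n+1},\ldots$ of the primary iterates.

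For $k\neq l$ with $|k-l|\geq 2$, say $k<l$, I would trace the open segment $]r^k,s^l[_u$ on $W^u$ and $]r^k,s^l[_s$ on $W^s$: by the monotone ordering just described, both contain the primary iterates $s^k,r^{k+1},s^{k+1},\ldots,r^l$. Hence any such iterate is a homoclinic point lying in $]r^k,s^l[_u\,\cap\,]r^k,s^l[_s$, ruling out an embedded di-gon and giving $m'(r^k,s^l)=0$ in this range.

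The main obstacle is the near-diagonal case $|k-l|=1$, where no iterate of $r$ or $s$ strictly separates the two vertices. Here I would exploit the ambient tangle structure: in the generic realization of a primary move as depicted in Figure~\ref{rs primary}, the newly born pair $r,s$ is flanked by pre-existing adjacent primary points (the points $p,q$ of that figure), and by the same monotonicity argument their iterates $p^m,q^m$ populate $]r^k,s^l[_u\,\cap\,]r^k,s^l[_s$ also for $|k-l|=1$. In the degenerate configuration where no such flanking primary pair is available one falls back on the $\lambda$-lemma, exactly as used in the existence proof of Theorem~\ref{existence q, q'}, to produce a (semi-)primary intersection of $W^u$ with $W^s$ inside the would-be di-gon. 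Either route yields $]r^k,s^l[_u\,\cap\,]r^k,s^l[_s\neq\emptyset$, so $\mcM(r^k,s^l)=\emptyset$ and $m'(r^k,s^l)=0$.
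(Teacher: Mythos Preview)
Your reduction to showing $]r^k,s^l[_u\cap\,]r^k,s^l[_s\neq\emptyset$ is exactly the paper's, and your ``generic'' treatment of the $|k-l|=1$ case via flanking pre-existing primary points $p,q$ is the paper's argument. The paper does not split by $|k-l|$; it argues uniformly that $r^m$ and $s^{m\pm1}$ cannot be adjacent, because otherwise (by the alternation in Remark~\ref{primary max}) $\langle r\rangle$ and $\langle s\rangle$ would be the \emph{only} primary classes in that pair of branches, forcing the branches to have no contractible primary points before the move --- contradicting the csi hypothesis on the isotopy. Hence a third primary point always flanks $r,s$, and its iterates obstruct every off-diagonal $\mcM(r^k,s^l)$ at once. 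Your $|k-l|\geq 2$ shortcut via iterates of $r,s$ themselves is a pleasant simplification the paper does not make.

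The genuine gap is your ``degenerate configuration'' fallback. The $\lambda$-lemma invocation is not an argument here: in Theorem~\ref{existence q, q'} the $\lambda$-lemma shows that $[p,\infty[_u$ eventually exits the interior of a \emph{heart}, which is a different geometric situation; and even if you produced a homoclinic point \emph{inside} the would-be di-gon, that would not show the di-gon fails to be embedded --- embeddedness concerns only whether the two \emph{boundary} segments $]r^k,s^l[_u$ and $]r^k,s^l[_s$ meet, not what lies in the interior. The correct resolution is simply that the degenerate configuration cannot occur: the lemma lives inside the proof of Theorem~\ref{inv th cpt}, where the isotopy is csi and (after the perturbation in the outline) every pair of intersecting branches carries a transverse primary point for all $\tau$. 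So flanking primary points $p,q$ always exist before the move, and your ``generic'' case is the only case. Replace the $\lambda$-lemma handwave with this one-line contradiction.
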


\begin{proof}
For fixed $m$, the points $r^m$ and $s^m$ are adjacent, but not $r^m$ and
$s^{m-1}$ and $s^m$ and $r^{m+1}$: otherwise $\langle r \rangle$ and $\langle l
\rangle$ would be the only primary equivalence classes of their pair of
intersecting branches implying nonintersecting branches before the move in
contradiction to the assumption on the isotopy. From \refprimarymax\ and
\reffiniteprimary\ we deduce $]r^m, s^n[_u\ \cap\ ]r^m, s^n[_s\ \neq \emptyset$
for $\betrag{m-n}\geq 1$ and thus $\mcM(r^m, s^n)= \emptyset$ and $m'(r^m,
s^n)=0$.
\end{proof}

For the following proofs, keep in mind that $m(p,q)m(p,q) \in \{0,1\}$. 

\begin{Lemma}
\label{def f, g}
We define on the generators
\begin{align*}
& f: ({\mathfrak C}_*', \dd ') \to ({\mathfrak C}_*, \dd ), && f(p):=\pi(p -
\sum_{n \in \Z}m'(r^n,s^n)\lkl p,s^n \rkl  \dd 'r^n), \\
& g: ({\mathfrak C}_*, \dd ) \to ({\mathfrak C}_*', \dd '), && g(p):=i(p)-
\sum_{n \in \Z} m'(r^n,s^n)m'(i(p), s^n)r^n
\end{align*}
and extend them by linearity. Then $f$ and $g$ are $\Z$-equivariant chain maps.
\end{Lemma}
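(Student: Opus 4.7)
The plan is to verify three things for each of $f$ and $g$: finiteness of the defining sums, $\Z$-equivariance, and the chain-map identity. Finiteness of $f(p)$ is immediate, since $\lkl p, s^n \rkl$ is nonzero for at most one $n$; finiteness of $g(p)$ follows from \reffiniteprimiterate, which ensures that $m'(i(p), s^n) \neq 0$ for only finitely many $n$. The $\Z$-equivariance is essentially automatic: $i$, $\pi$, $\dd$ and $\dd'$ are equivariant, the signs $m$ and $m'$ are invariant under simultaneous iteration, and so applying $\phi_*$ to $f$ or $g$ merely reindexes the sum over $n$.

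The heart of the proof is the chain-map property for $f$, which I would argue case-by-case on the generator $p \in \mathfrak C_*'$. When $p \in \mcHpr$ we have $f(p) = p$, and expanding $f\dd'(p)$ uses the elementary evaluations $f(q) = q$ for $q \in \mcHpr$, $f(r^n) = 0$, and $f(s^k) = -m'(r^k, s^k)\, \pi(\dd' r^k)$ (the last via \refmkron). Applying the fish formula \reffishformula\ to the $\mcHpr$-part converts $m'(p, q)$ into $m(p,q)$, and the $s^n$-contributions produced by the $f(s^k)$-terms cancel exactly against the correction term in the fish formula, leaving $\dd p = \dd f(p)$. When $p = r^k$ both sides vanish: $f(r^k) = 0$ by definition, and expanding $\dd' r^k = m'(r^k, s^k)\, s^k + \sum_{q \in \mcHpr} m'(r^k, q)\, q$ via \refmkron\ yields $f\dd'(r^k) = \bigl(1 - m'(r^k, s^k)^2\bigr) \pi(\dd' r^k) = 0$, since $r^k, s^k$ are adjacent primary points of index difference one and hence $m'(r^k, s^k) = \pm 1$. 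When $p = s^k$, the identity $\dd f(s^k) = f\dd'(s^k)$ reduces to the $\mcHpr$-component of $\dd'\dd' r^k = 0$ combined with \refcompared, which applies because for every target $q$ appearing in $\pi(\dd' r^k)$ one has $\mu(q, r) \neq 0$.

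For $g$ the strategy is parallel. Expanding $\dd' g(p)$ and substituting the decomposition of $\dd' r^n$ above, then applying the fish formula, collapses the $\mcHpr$-coefficients into $\sum_q m(p,q)\, i(q)$ and annihilates the $s^n$-contributions entirely, producing $\dd' g(p) = \sum_{q \in \mcHpr} m(p,q)\, i(q) + \sum_n m'(i(p), r^n)\, r^n$. Comparing against $g\dd(p)$ reduces the chain-map identity to the scalar equation
\beqs
m'(i(p), r^n) = -m'(r^n, s^n) \sum_{q \in \mcHpr} m(p, q)\, m'(i(q), s^n),
\eeqs
which I would deduce by reading off the $s^n$-coefficient of $\dd'\dd' i(p) = 0$, using \refmkron\ to isolate the $r^n$-contribution, and then substituting the fish formula to rewrite $m'(i(p), i(q))\, m'(i(q), s^n)$ in terms of $m(p,q)$.

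The main obstacle is the algebraic bookkeeping: several sources of $r^n$- and $s^n$-terms must be tracked simultaneously and matched, and the cancellations rely on three non-trivial ingredients — \refmkron\ to kill cross-terms $m'(r^k, s^l)$ for $k \neq l$, \reffishformula\ to pass between the two sign systems, and $\dd'\dd' = 0$ applied to both the newly born generators $r^n$ and to the images $i(p)$. Each individual identity is straightforward once the right reference is cited; the real work lies in organising the case analysis so that no contribution is overlooked.
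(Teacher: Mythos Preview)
Your proposal is correct and, for $f$, essentially mirrors the paper's own argument: both split on the generator type $p \in \mcHpr$, $p = r^k$, $p = s^k$, and both handle the $s^k$-case via $\dd'\dd' r^k = 0$ together with \refcompared\ (the paper packages \reffishformula\ into \refcompared, while you cite \reffishformula\ directly --- these are interchangeable).

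For $g$ the organizations differ. The paper splits on whether $\mu(i(p),r)=0$, treating the two cases separately with \refcompared\ and the identity $m'(\dd' i(p), s^m)=0$ (labelled \refmdel). You instead compute $\dd' g(p)$ uniformly --- the fish formula collapses the $i(q)$-coefficients to $m(p,q)$ and the $s^n$-coefficients to zero regardless of grading --- and then reduce the $r^n$-coefficient match to your scalar identity. That identity is exactly the $s^n$-coefficient of $\dd'\dd' i(p)=0$: in the only nontrivial grading case $\mu(i(p),r)=1$ one has $m'(i(p),s^k)=0$, so the fish-formula substitution $m'(i(p),i(q)) \leadsto m(p,q)$ is in fact the identity there, and in all other gradings both sides vanish. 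Your route avoids the paper's case split at the cost of tracking the grading constraints a bit more carefully; the underlying ingredients (\refmkron, \reffishformula, $\dd'\dd'=0$) are the same.
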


\begin{proof}
For $m \in \Z$ we compute
\beqs
 f(r^m)=0 ,\quad
 f(s^m)= -m'(r^m,s^m) \pi \dd 'r^m, \quad
 f(p)= \pi(p) \quad \mbox{for } p \neq r^m,\ s^m.
\eeqs
Recall $\mu(r^m,s^m)=1$ and $\mcMhat(r^m,s^m) \neq \emptyset$ such that
$m'(r^m,s^m) = \pm 1$ and keep the equations
\begin{align*}
& \dd  p = \pi(\dd 'i(p) - \sum_{n \in \Z} m'(p,s^n)m'(r^n,s^n) \dd 'r^n) \quad
\mbox{for }\mu(i(p),r)=0, \\
& \dd  p = \pi(\dd 'i(p)) \quad \mbox{otherwise}
\end{align*}
from \refcompared\ in mind. For $f$, we obtain
\begin{align*}
f(\dd 'r^m) & =f(i \pi\dd 'r^m +\sum_{n \in \Z} m'(r^m,s^n)s^n)\stackrel{\ref{m
kron}}{=}
f(i \pi\dd 'r^m +m'(r^m,s^m)s^m) \\
&= \pi i \pi \dd'r^m - 0 + 0 - m'(r^m, s^m)m'(r^m, s^m) \pi(\dd'r^m) \\
& =  \pi \dd 'r ^m -\pi \dd 'r^m =0= \dd  0 \\
& = \dd  f(r^m), \\
\dd  f(s^m) & = \dd (-m'(r^m,s^m) \pi \dd 'r^m)\stackrel{\ref{compare d}}{=}\pi
\dd '(-m'(r^m,s^m) i \pi \dd 'r^m) \\
& = - m'(r^m,s^m) \pi \dd ' i \pi \dd ' r^m \\
& = - m'(r^m,s^m) \pi \dd '(\dd 'r^m - \sum_{n \in \Z} m'(r^m,s^n)s^n) \\
& \stackrel{\ref{m kron}}{=} -m'(r^m,s^m)(\pi \dd ' \dd ' r^m -m'(r^m,s^m) \pi
\dd 's^m) \\
& = \pi \dd ' s^m \\
& = f(\dd ' s^m).
\end{align*}
For $p \neq r^m$, $ s^m$ and $m \in \Z$, we obtain
\begin{align*}
f(\dd 'p) & =f(i \pi \dd 'p + \sum_{n \in \Z} m'(p,r^n)r^n + m'(p,s^n)s^n) \\
& =   \pi i \pi \dd' p - \pi \left( \sum_{l \in \Z}m'(r^l, s^l) \lkl i \pi \dd'
p, s^l \rkl \dd'r^l \right) \\
& \quad + \pi \left( \sum_{n \in \Z}m'(p,r^n)r^n\right) \\
& \quad - \pi \left( \sum_{l \in \Z}m'(r^l, s^l) \lkl \sum_{n \in \Z}m'(p,r^n)r^n, s^l
\rkl \dd' r^l \right) \\
& \quad + \pi \left( \sum_{n \in \Z}m'(p,s^n)s^n\right) \\
& \quad - \pi \left( \sum_{l \in \Z}m'(r^l, s^l) \lkl \sum_{n \in \Z}m'(p,s^n)s^n, s^l
\rkl \dd'r^l \right) \\
& = \pi i \pi \dd 'p -0 + 0 -0 +0 - \sum_{l \in \Z} m'(p,s^l) m'(r^l,s^l) \pi
\dd 'r^l\\
& = \pi \dd 'p - \sum_{l \in \Z} m'(p,s^l) m'(r^l,s^l) \pi \dd 'r^l \\ 
& = \pi \dd 'i \pi p - \sum_{l \in \Z}m'(p,s^l) m'(r^l,s^l) \pi \dd 'r^l \\
& \stackrel{\ref{compare d}}{=} \dd  \pi p \\
& = \dd  f(p).
\end{align*}
Now we extend the definition of $m'(p,q)$ etc. by linearity from primary points
to elements of ${\mathfrak C}_*'$, i.e. $m'(\sum_j p_j,q):= \sum_j m'(p_j,q)$,
and consider $g$:

{\em Case $\mu(i(p),  r)=0$}: 
We first show 
\begin{equation}
\label{no sr}
i\pi \dd ' g(p)=\dd ' g(p)
\end{equation}
 which follows from $\lkl \dd' g(p), r^m\rkl =0$ due to $\mu(i(p),  r^m)=0$ for
$m \in \Z$ and 
\begin{align*}
\lkl \dd 'g(p), s^m \rkl  & = m'(g(p), s^m)= m'(i(p) - \sum_{n \in \Z}
m'(r^n,s^n) m'(i(p),s^n)r^n, s^m) \\
& = m'(i(p), s^m)- \sum_{n \in \Z} m'(r^n,s^n) m'(i(p), s^n)m'(r^n,s^m) \\
& \stackrel{\ref{m kron}}{=}  m'(i(p), s^m)- m'(r^m, s^m) m'(i(p), s^m)m'(r^m,
s^m) \\
&= m'(i(p), s^m)-m'(i(p), s^m) \\
& =0.
\end{align*}
Now we obtain
\begin{align*}
g(\dd  p)& \stackrel{\mu(i(p), r)=0}{=} i(\dd  p)
\stackrel{\ref{compare d}}{=} i \pi \dd '(i(p) - \sum_{n \in \Z}
m'(i(p),s^n)m'(r^n,s^n)r^n) \\
& = i \pi \dd ' g(p)\stackrel{\ref{no sr}}{=} \dd ' g(p).
\end{align*}
{\em Case $\mu(i(p), r) \neq 0$}:
First note
\begin{equation}
\label{m'(del)=0}
m'(\dd'i(p), s^m)= \lkl \dd' (\dd'i(p)), s^m \rkl =0
\end{equation}
and then compute
\begin{align*}
g(\dd  p) & \stackrel{\ref{compare d}}{=} g(\pi \dd 'i(p)) \\
& = i \pi \dd 'i(p) - \sum_{n \in \Z} m'(r^n,s^n) m'(i \pi \dd 'i(p), s^n)r^n \\
& = i \pi \dd ' i(p) - \sum_{n \in \Z} m'(r^n,s^n)m'(\dd ' i(p) \\ 
& \quad - \sum_{l \in
\Z} m'(i(p), r^l)r^l - m'(i(p), s^l)s^l, s^n) r^n \\
&  = i \pi \dd ' i(p) -\sum_{n \in \Z}  m'(r^n,s^n) (m'(\dd 'i(p), s^n) \\ 
& \quad - \sum_{l
\in \Z}m'(i(p), r^l)m'(r^l,s^n)- 0)r^n \\
& \stackrel{\refmdel}{=} i \pi \dd ' i(p) +\sum_{n \in \Z} m'(r^n,s^n) m'(i(p),
r^n)m'(r^n,s^n)r^n \\
& = i \pi \dd ' i(p) + \sum_{n \in \Z} m'(i(p), r^n)r^n  \\
& \stackrel{\mu(i(p), r)\neq0}{=} \dd 'i(p) \\
&   \stackrel{\mu(i(p), r)\neq0}{=}  \dd '(g(p)).
\end{align*}
Since $\pi$ and $i$ are $\Z$-equivariant so are $f$ and $g$.
\end{proof}

Now we show that $f$ and $g$ induce isomorphisms between the homologies of
$({\mathfrak C}_*', \dd ')$ and $({\mathfrak C}_*, \dd )$.

\begin{Theorem}
\label{rs primary isom}
The homologies of $(\mathfrak C_*', \dd')$ and $(\mathfrak C_*, \dd)$ are
isomorphic.
\end{Theorem}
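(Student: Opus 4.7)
My strategy is to show that the chain maps $f$ and $g$ constructed in \refdeffg\ are mutual homotopy inverses, which yields the desired isomorphism on homology.

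The composition $f \circ g = \mathrm{id}_{\mathfrak C_*}$ will hold on the nose. For $p \in \mcHpr$, one has $i(p) = p$ and $g(p) = p - \sum_{n \in \Z} m'(r^n, s^n) m'(p, s^n) r^n$; applying $f$ kills every $r^n$-term since $f(r^n) = 0$ by definition, and sends $p$ to $\pi(p) = p$ because $p$ is distinct from every $r^n$ and $s^n$.

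For the reverse composition $g \circ f$, I plan to exhibit an explicit chain homotopy $H : \mathfrak C_*' \to \mathfrak C_{*+1}'$ satisfying $g \circ f - \mathrm{id}_{\mathfrak C_*'} = \mathfrak d' H + H \mathfrak d'$. Motivated by the intuition that the pair of new generators $\{r^n, s^n\}$ forms an acyclic summand, the natural choice is
\beqs
H(s^n) := -m'(r^n, s^n)\, r^n, \qquad H(r^n) := 0, \qquad H(p) := 0 \ \text{for } p \in \mcHpr \setminus \{r^n, s^n : n \in \Z\}.
\eeqs
Verification on a generator $p \in \mcHpr$ is straightforward: both sides equal $-\sum_n m'(r^n, s^n) m'(p, s^n) r^n$ (the left side from the formula for $g$, the right side because $H(p) = 0$ while $H$ applied to the $s^n$-terms in $\mathfrak d' p$ reproduces exactly this sum). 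On $r^m$, both sides equal $-r^m$, using $\mathfrak d' r^m = m'(r^m, s^m) s^m + (\text{terms in } \mathfrak C_*)$, which follows from \refmkron.

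The main obstacle will be the case $q = s^m$. Here $\mathfrak d' H(s^m) = -m'(r^m, s^m) \mathfrak d' r^m$ and $g(f(s^m)) = -m'(r^m, s^m) \sum_{p \in \mcHpr} m'(r^m, p) g(p)$ agree in the $\mathfrak C_*$-part, but the latter carries an additional $r^n$-correction of the form $m'(r^m, s^m) \sum_{n, p} m'(r^m, p) m'(r^n, s^n) m'(p, s^n) r^n$. Its vanishing reduces to the assertion $\sum_{p \in \mcHpr} m'(r^m, p) m'(p, s^n) = 0$ for every $n$, which is precisely the coefficient of $s^n$ in $\mathfrak d'^2 r^m$ (other potential contributions to this coefficient vanish by Maslov grading together with \refmkron). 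Since $\mathfrak d' \circ \mathfrak d' = 0$ is already established in \refdelfraksquare, the cancellation is automatic, the chain-homotopy identity closes, and $f$, $g$ descend to mutually inverse isomorphisms on homology.
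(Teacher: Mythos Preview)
Your approach is the same as the paper's: show $f\circ g=\Id_{\mathfrak C_*}$ directly and exhibit the chain homotopy $H(p)=-\sum_n\lkl s^n,p\rkl m'(r^n,s^n)r^n$ for $g\circ f$. Your verifications on $p\in\mcHpr$ and on $r^m$ match the paper's.

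One remark on the $s^m$ case: the ``$r^n$-correction'' you worry about vanishes for a simpler reason than $(\dd')^2=0$. In the sum $\sum_{p\in\mcHpr} m'(r^m,p)\,m'(p,s^n)$, the factor $m'(r^m,p)$ forces $\mu(p)=\mu(r^m)-1=\mu(s^n)$, whereas $m'(p,s^n)\neq 0$ would require $\mu(p)=\mu(s^n)+1$; hence every term is zero by grading alone. Invoking the coefficient of $s^n$ in $(\dd')^2 r^m$ is vacuous for the same reason: $(\dd')^2 r^m$ lives in degree $\mu(r^m)-2$, two below $\mu(s^n)$. The paper handles this step by simply observing $m'(i\pi\dd' r^m,s^n)=0$ and moving on. Your conclusion is correct; only the justification is more elaborate than needed.
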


\begin{proof}
For $f$ and $g$ from \refdeffg, we show that $f_*: H({\mathfrak C}_*', \dd ')
\to H({\mathfrak C}_*, \dd )$ and $g_* : H({\mathfrak C}_*, \dd ) \to
H({\mathfrak C}_*', \dd ')$ are inverse to each other. It is enough to show $f
\circ g \simeq \Id_{{\mathfrak C}_*}$ and $g \circ f \simeq \Id_{{\mathfrak
C}_*'}$ where $\simeq$ stands for homotopic by a chain homotopy.
$f \circ g: ({\mathfrak C}_*, \dd ) \to ({\mathfrak C}_*, \dd )$ is even the
identity:
\begin{align*}
f(g(p)) & = f(i(p))- \sum_{n \in \Z} m'(r^n,s^n)m'(i(p),s^n) f(r^n)= f(i(p)) \\
& = \pi i(p) - \pi \left( \sum_{n \in \Z} m'(r^n,s^n)\lkl i(p),s^n\rkl  \dd
'r^n\right) = \pi i(p) = \Id_{{\mathfrak C}_*}(p).
\end{align*}
Unfortunately, this is not true for $g \circ f$. But we can find a chain
homotopy $h: ({\mathfrak C}_*', \dd ') \to ({\mathfrak C}_{*+1}', \dd ')$
satisfying $g \circ f -\Id_{{\mathfrak C}_*'}= h \circ \dd ' + \dd ' \circ h$.
Choose 
\beqs
h(p):= - \sum_{n \in \Z} \lkl s^n,p\rkl m'(r^n,s^n)r^n
\eeqs
and compute for $m \in \Z$
\begin{align*}
& (h \circ \dd ' + \dd ' \circ h)(r^m) \\
& \qquad = - \sum_{n \in \Z} \lkl s^n, \dd 'r^m
\rkl  m'(r^n,s^n)r^n  
- \dd '\left( \sum _{n \in \Z}\lkl s^n,r^m \rkl m'(r^n,s^n)r^n\right) \\
& \qquad = - \sum_{n \in \Z} m'(r^m ,s^n )m'(r^n,s^n)r^n \\
& \qquad \stackrel{\ref{m kron}}{=} -m'(r^m, s^m) m'(r^m, s^m)r^m \\
 & \qquad =-r^m ,
\end{align*}
and 
\begin{align*}
& (h \circ \dd ' + \dd ' \circ h)(s^m) \\
& \qquad  = -\sum_{n \in \Z} \lkl s^n,\dd 's^m \rkl
m'(r^n,s^n)r^n - \sum_{n \in \Z} \lkl s^n,s^m\rkl m'(r^n,s^n) \dd 'r^n \\
& \qquad = -m'(r^m,s^m)\dd 'r^m
\end{align*}
and for $p \neq r^m$, $s^m$ for $m \in \Z$
\begin{align*}
& (h \circ \dd ' + \dd ' \circ h)(p) \\
& \qquad  = -\sum_{n \in \Z}\lkl s^n, \dd 'p\rkl 
m'(r^n,s^n) r^n   - \dd'\left( \sum_{n \in \Z} \lkl s^n,p\rkl  m'(r^n,s^n) r^n
\right) \\
& \qquad  = - \sum_{n \in \Z} m'(p,s^n) m'(r^n,s^n) r^n.
\end{align*}
On the other hand, we obtain
\begin{equation*}
(g \circ f - \Id_{{\mathfrak C}_*'})(r^m) = g(f(r^m))-r^m=-r^m 
\end{equation*}
and
\begin{align*}
& (g \circ f - \Id_{{\mathfrak C}_*'})(s^m) \\
& \qquad = g(\pi(s^m)-\pi \left( \sum_{n \in
\Z} m'(r^n,s^n)\lkl s^m,s^n\rkl \dd 'r^n \right) )-s^m \\
& \qquad = g(-m'(r^m, s^m) \pi \dd' r^m) - s^m \\
& \qquad = -m'(r^m,s^m)g(\pi\dd 'r^m)-s^m \\
&\qquad = -m'(r^m,s^m) \left( i \pi \dd 'r^m - \sum_{n \in \Z}m'(r^n,s^n) m'(i \pi \dd 'r^m,
s^n)r^n\right) -s^m \\
&\qquad = -m'(r^m,s^m)  i \pi \dd 'r^m -s^m \\
& \qquad = -m'(r^m,s^m)(\dd 'r^m - \sum_{n \in \Z} m'(r^m,s^n)s^n) -s^m \\
&\qquad \stackrel{\ref{m kron}}{=} -m'(r^m,s^m) \dd 'r^m + m'(r^m, s^m)m'(r^m, s^m)s^m
-s^m \\
& \qquad = -m'(r^m,s^m) \dd 'r^m
\end{align*}
and for $p \neq r^m$, $s^m$ for $m \in \Z$
\begin{align*}
(g \circ f - \Id_{{\mathfrak C}_*'})(p) 
& = g\left(\pi(p) - \sum_{n \in \Z}
m'(r^n,s^n) \lkl p,s^n\rkl  \pi\dd 'r^n\right)- p \\
& = g(\pi(p))-p  \\
& = i\pi (p) -\sum_{n \in \Z} m'(r^n,s^n) m'(i \pi(p), s^n)r^n - p
\\
& =  - \sum_{n \in \Z} m'(r^n,s^n) m'(p, s^n)r^n.
\end{align*}
Comparing the results yields $g \circ f - \Id_{{\mathfrak C}_*'} = h \circ \dd '
+ \dd ' \circ h$ which proves the claim.
\end{proof}

Moreover, note that also the chain homotopy $h$ commutes with the $\Z$-action on
the chain complexes.
Now we divide by the $\Z$-action. Define $C_*$ and $C_*'$ analogously to
$\mathfrak C_*$ and $\mathfrak C_*'$. Since $f$, $g$ and $h$ commute with the
$\Z$-action on the chain complexes they pass to $C_*$ and $C_*'$ and we obtain

\begin{Theorem}
\label{prim move invar}
The homologies of $(C_*, \del)$ and $(C'_*, \del')$ are isomorphic, i.e. primary
moves leave the primary Floer homology invariant.
\end{Theorem}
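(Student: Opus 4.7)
The plan is to obtain Theorem \ref{prim move invar} as a formal consequence of Theorem \ref{rs primary isom} by descent to the quotient by the $\Z$-action. All of the nontrivial algebra was already carried out at the level of $(\mathfrak C_*, \dd)$ and $(\mathfrak C_*', \dd')$; what remains is to check that the chain maps $f$, $g$ from \refdeffg\ and the chain homotopy
\beqs
h(p) = - \sum_{n \in \Z} \lkl s^n, p \rkl\, m'(r^n, s^n)\, r^n
\eeqs
constructed in the proof of \refrsprimaryisom\ all commute with the $\Z$-action of $\phi_*$, so that they descend to $(C_*, \del)$ and $(C_*', \del')$.

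First I would verify $\Z$-equivariance explicitly. The maps $\pi$ and $i$ are manifestly $\phi_*$-equivariant, and \refdeffg\ already records that $f$ and $g$ are. For $h$, the same reasoning applies: the family $(r^n, s^n)_{n \in \Z}$ is a single $\Z$-orbit pair, the signs satisfy $m'(r^n, s^n) = m'(r^{n+1}, s^{n+1})$ by \refmuZcomp, and $\phi_*$ shifts the index in $\lkl s^n, \phi_*(p)\rkl = \lkl s^{n-1}, p \rkl$. Reindexing the sum shows $\phi_* \circ h = h \circ \phi_*$.

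Consequently $f$, $g$, $h$ pass to well-defined maps $\bar f : (C_*', \del') \to (C_*, \del)$, $\bar g : (C_*, \del) \to (C_*', \del')$ and $\bar h : (C_*', \del') \to (C_{*+1}', \del')$ on the quotient complexes. The chain-map identities $\dd \circ f = f \circ \dd'$ and $\dd' \circ g = g \circ \dd$ proven in \refdeffg, together with the homotopy identities $f \circ g = \Id_{\mathfrak C_*}$ and $g \circ f - \Id_{\mathfrak C_*'} = h \circ \dd' + \dd' \circ h$ established in the proof of \refrsprimaryisom, are $\Z$-equivariant equations between $\Z$-equivariant maps, so they descend verbatim to
\beqs
\bar f \circ \bar g = \Id_{C_*}, \qquad \bar g \circ \bar f - \Id_{C_*'} = \bar h \circ \del' + \del' \circ \bar h.
\eeqs

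Passing to homology, $\bar f_*$ and $\bar g_*$ are therefore mutually inverse isomorphisms $H_*(C_*, \del) \simeq H_*(C_*', \del')$, which is the desired invariance under a primary move. There is no real obstacle here; the only bookkeeping point is to make sure the Kronecker pairing $\lkl s^n, p \rkl$ in the definition of $h$ is interpreted consistently with the identification $\mcHpr' = \mcHpr \cup \{r^n, s^n \mid n \in \Z\}$, so that $\bar h$ indeed sends equivalence classes to equivalence classes — this follows immediately from the $\Z$-equivariance verified above.
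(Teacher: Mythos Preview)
Your proof is correct and follows exactly the same approach as the paper: the paper simply notes that $f$, $g$, and the chain homotopy $h$ all commute with the $\Z$-action, so they descend to the quotient complexes $(C_*, \del)$ and $(C_*', \del')$, yielding the isomorphism immediately from \refrsprimaryisom. You have just spelled out the equivariance of $h$ in slightly more detail than the paper does.
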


\subsection{Invariance under mixed moves}

The invariance under mixed moves will be reduced to the invariance under primary
and secondary moves.
If not stated otherwise, we will work with the lifted tangles on the universal
cover.

\vsp

Now we want to investigate how mixed moves look like. If a $(r,s)$-move flips a
primary points $p$ secondary the segments $]p,x[_u$ and $]p,x[_s$ have to
intersect after the move. In particular, $r$, $s$ and $p$ have to lie in the
same pair of branches.
Since the (un)stable \mf s are free of self-intersections a mixed move always
takes place within a fixed frame, i.e. the mixed move cannot `overlap' into
another iterate of the frame.
With \refprimarymax\ in mind, mixed moves look as sketched in Figure \ref{mixed}
(where $p_{2n+1}$ still lies in the frame induced by $p$). Mixed moves come
along with $2n +1$ flips and are called {\em simple} if $n=0$.

\begin{figure}[h]
\begin{center}

\input{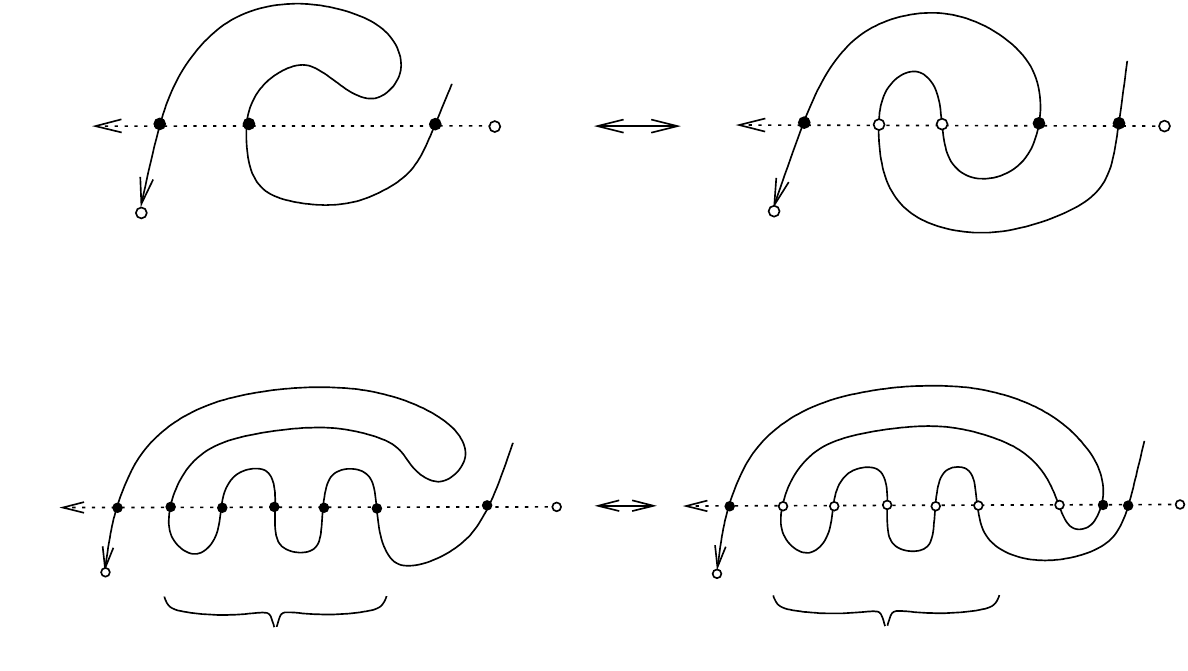_t}

\caption{Mixed $(r,s)$-moves with one flip in (i) and 2n+1 flips in (ii).}
\label{mixed}

\end{center}
\end{figure}

\vsp


Before we consider the invariance under mixed moves we note the following.
Without the condition `$\dots \cap \mcH_{[x]}$' in the definition of `primary',
primary Floer homology would not be invariant: Consider Figure \ref{bad flip}
where a move circles around genus. Assume for sake of simplicity that only the
branches containing $p$ intersect.
In our convention, the move is secondary and thus leaves the homology invariant.
Dropping `$\dots \cap \mcH_{[x]}$' is equivalent to using the {\em contractible}
semi-primary points as generators of the chain complex. 
Before the move, $p$ and $q$ are contractible and semi-primary, but after the
move $q$ is no longer semi-primary. The generated $r$ is secondary and $s$
semi-primary, but not contractible. Thus it is excluded as generator.
Before the move we obtain $H_{-1}=\Z \langle p \rangle $ and $H_{-2}=\Z \langle
q \rangle$ and $H_*=0$ for $n \neq -1, -2$. But after the move there is only $p$
left as generator. Thus $H_{-1}=\Z \langle p \rangle$ and $H_*=0$ otherwise.
This phenomenon inspires the definition of semi-primary Floer homology in
\refsemiprimhom.

\begin{figure}[h]
\begin{center}

\input{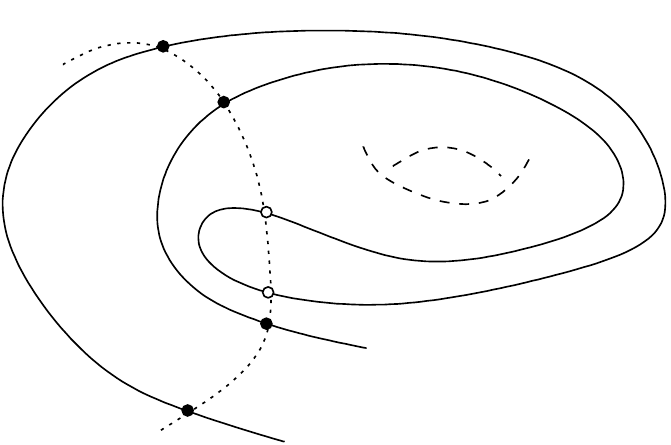_t}

\caption{Arising of nontrivial homotopy classes.}
\label{bad flip}

\end{center}
\end{figure}

\begin{Proposition}
\label{simple mixed invar}
Primary Floer homology stays invariant under simple mixed moves.
\end{Proposition}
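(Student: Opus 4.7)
The plan is to reduce invariance under a simple mixed move to the already-established invariances under primary moves (\refprimmoveinvar) and under secondary moves (\refsecunimport\ combined with \refmovechar~(3)). Using Figure \ref{mixed}(i) and \refflipviamixed, I would first make the local bifurcation data explicit: at the tangency time $\tau_0$ a pair $(r,s)$ appears with $r$ primary and $s$ secondary, and a single primary class $\langle p_1\rangle$ flips to secondary because a representative of $\langle r\rangle$ or $\langle s\rangle$ now lies in $]x,p_1[_u \cap\ ]x,p_1[_s \cap \mcH_{[x]}$. Thus the net change on $\mcHprti$ is that $\langle p_1\rangle$ is replaced by $\langle r\rangle$, while all other primary classes persist.

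Next I would perturb the isotopy $\Phi$ on an arbitrarily small neighbourhood of $\tau_0$, keeping the endpoints fixed, to a $C^k$-close symplectic isotopy $\Phi'$ whose bifurcations on that neighbourhood consist of two well-separated simple tangencies at times $\tau_0 - \delta$ and $\tau_0 + \delta$: a primary tangency creating $r$ together with an auxiliary transverse primary point $r'$, followed by a secondary tangency annihilating $r'$ against $p_1$ (both secondary at the instant of collision) and producing the persistent secondary point $s$ of the original mixed move. Existence of such $\Phi'$ rests on the local normal form $f + C(\tau - \tau_0)$ for simple tangencies recalled at the outset of the invariance proof and the openness of the csi condition, which together allow one to split a single tangency into two distinct ones by a compactly supported, symplectically generated wiggle. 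By \refstartend, the primary Floer homologies at the endpoints of the perturbation neighbourhood coincide with those of $\Phi$.

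Applying \refprimmoveinvar\ to the first tangency and \refsecunimport\ together with \refmovechar~(3) to the second then yields the desired isomorphism $H_*(x_{\tau_0-\ep}, \Phi_{\tau_0-\ep}) \simeq H_*(x_{\tau_0+\ep}, \Phi_{\tau_0+\ep})$ across the simple mixed move.

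The hard part will be engineering the intermediate state of $\Phi'$ so that (i) the first tangency is purely primary, i.e.\ the creation of $r'$ in the geometric slot eventually occupied by $r$ or $s$ does not itself flip any existing primary point secondary (so \refmovechar~(1) applies), and (ii) at the instant of the second tangency both $r'$ and $p_1$ are genuinely secondary (so \refmovechar~(3) applies). Establishing this will require a careful tracking, via \refpositionprim\ and \refprimarymax, of the primary status of every intersection point in the frame induced by $p$ as $\tau$ varies across $[\tau_0 - \delta, \tau_0 + \delta]$, and possibly further refinement of $\Phi'$ to interleave a short-lived auxiliary bifurcation that flips $p_1$ between the two main tangencies. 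Should such a clean decomposition be obstructed, the natural fallback is a direct algebraic argument modeled on \refdeffg\ and \refrsprimaryisom: define explicit chain maps and a chain homotopy that replace the generator $\langle p_1\rangle$ by $\langle r\rangle$ and transfer the signs $m(p_1,\cdot)$ to $m(r,\cdot)$ using the geometric correspondence of immersions across the tangency.
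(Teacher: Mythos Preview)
Your main proposal has a counting obstruction. A primary move changes the number of primary equivalence classes by $+2$ (\refmovechar~(1)), a secondary move by $0$ (\refmovechar~(3)), while a simple mixed move changes it by $0$ (one new primary $s$, one flip of $p_1$). So any factorization into one primary move followed by one secondary move would produce a net change of $+2$, not $0$. For your second tangency to be secondary you need both $r'$ and $p_1$ to be secondary at the instant of collision, but \refflipviamixed\ says a primary point can only flip via a mixed move; since your first move is primary it cannot flip $p_1$, and your proposed ``interleave a short-lived auxiliary bifurcation'' would itself have to be mixed, which is circular. The perturbation ansatz therefore cannot work for the simple case.

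The paper's argument is the one you list as a fallback, and it is much shorter than you anticipate. Looking at Figure~\ref{mixed}~(i), after the move the new primary point $s$ occupies exactly the combinatorial slot in the frame that $p_1$ held before: it is adjacent to the same neighbours and bounds the same di-gons (up to the now-secondary pair $(p_1,r)$, which by the reasoning behind \refsecunimport\ does not affect di-gons between primaries). Hence the map sending $\langle p_1\rangle\mapsto\langle s\rangle$ and fixing all other primary classes is already a chain complex isomorphism; no chain homotopy in the style of \refdeffg\ is needed. The phrase ``identification followed by the secondary $(p_1,r)$-move'' in the paper is shorthand for precisely this observation, not a genuine perturbation of the isotopy. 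Incidentally, the decomposition strategy you propose is what the paper uses for the \emph{general} mixed move in \refmixedinvar, but there the simple mixed move is kept as an irreducible building block rather than being split further.
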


\begin{proof}
Consider Figure \ref{mixed} (i): the simple mixed $(r,s)$-move can be recognized
as an identification followed by the secondary $(p_1,r)$-move. Since both leave
the homology invariant so does the simple mixed move.
An explicit chain complex isomorphism is given by $f: (C_*, \del)\ 
\longrightarrow\  (C_*', \del')$, $a \mapsto a$ for $ a \in
\mcHprti\backslash\{\langle p_1 \rangle\}$ and $ \langle p_1 \rangle \mapsto
\langle s \rangle$.
\end{proof}

Now we consider the invariance under arbitrary mixed moves.

\begin{Theorem}
\label{mixed invar}
Primary Floer homology is invariant under mixed moves.
\end{Theorem}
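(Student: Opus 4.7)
The plan is to reduce a general mixed move to a composition of moves whose invariance has already been established: secondary moves (\refsecunimport), primary moves (\refprimmoveinvar) and simple mixed moves (\refsimplemixedinvar). The geometric observation is that a general mixed $(r,s)$-move with $2n+1$ flips (Figure \ref{mixed}(ii)) is realised by a single tangency in $\Phi$, but the configuration that causes the simultaneous flips of $p_1,\dots,p_{2n+1}$ is \emph{non-generic} among isotopies: it requires the newly arising segment of $W^u$ (or $W^s$) to sweep through the stable (resp.\ unstable) segments $]x,p_i[_s$ (resp.\ $]x,p_i[_u$) for all $i=1,\dots,2n+1$ at the very same parameter value $\tau_0$. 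By an arbitrarily small $C^k$-perturbation of $\Phi$ inside the class of csi isotopies, these sweeps can be temporally separated.

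Concretely, I would first perturb $\Phi$ so that the tangency at $\tau_0$ splits into $2n+1$ successive generic tangencies at parameters $\tau_0^{(1)}<\dots<\tau_0^{(2n+1)}$, each creating a small intersection pair $(r_j,s_j)$ whose appearance causes exactly one of the $p_i$ to flip. Each such elementary bifurcation is then a \emph{simple} mixed move of the type treated in \refsimplemixedinvar. Between consecutive tangencies one may need intermediate primary moves that merge or annihilate the auxiliary pairs $(r_j,s_j)$ with one another (so that at the end of the composed deformation only the original pair $(r,s)$ survives); these are handled by \refprimmoveinvar. Any further unintended intersections produced along the way are of secondary type and are absorbed by \refsecunimport. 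Since each move in this finite sequence induces an isomorphism on primary Floer homology, so does the composition, which by construction realises the original mixed $(r,s)$-move.

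The main obstacle — the step where I expect to have to work — is the geometric realisation of the perturbation. One must show (i) that the perturbed path stays a csi isotopy, (ii) that the intermediate configurations contain only the three allowed elementary bifurcation types, and (iii) that the auxiliary pairs $(r_j,s_j)$ can indeed be merged back into the single pair $(r,s)$ by a controlled sequence of primary moves without introducing new primary-secondary flips beyond those already accounted for. Genericity of simple tangencies together with the local normal form $f+C(\tau-\tau_0)$ recalled at the beginning of the invariance discussion makes the separation of tangencies standard; the control of the auxiliary pairs requires exploiting \refpositionprim\ and \refprimarymax\ to track the combinatorics within the frame induced by a persistent primary point.

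An alternative and more algebraic route would be to generalise the explicit chain map of \refsimplemixedinvar\ directly: define $f:(C_*,\del)\to(C_*',\del')$ by $a\mapsto a$ on generators not involved in the flip and by $\langle p_i\rangle\mapsto\langle s\rangle$ or a suitable linear combination for the flipped classes, with correction terms compensating the rank drop. Verifying that such an $f$ is a chain map would amount to a fish-type formula analogous to \reffishformula\ for mixed bifurcations; this is essentially the same combinatorial content as the decomposition argument, repackaged algebraically, and could serve as a cross-check.
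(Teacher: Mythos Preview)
Your overall strategy --- reduce the general mixed move to a finite concatenation of moves already known to preserve the homology --- is exactly the paper's strategy. But your implementation rests on a mistaken geometric picture, and this is a genuine gap.

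You claim that a mixed move with $2n+1$ flips is \emph{non-generic} and can be broken into $2n+1$ separate tangencies by an arbitrarily small perturbation of the isotopy. This is false. The general mixed move in Figure~\ref{mixed}(ii) involves a \emph{single} quadratic tangency creating the single pair $(r,s)$; the $2n+1$ simultaneous flips are not caused by $2n+1$ independent ``sweeps'' but by the mere appearance of the one new primary point $r$ in $]x,p_i[_u\cap]x,p_i[_s$ for all $i$ at once. That simultaneity is forced by the combinatorial position of the tangency in the frame, not by any extra coincidence. A single simple tangency is already codimension one in the space of isotopies, so a small $C^k$-perturbation cannot split it into several tangencies --- it will just move the one tangency slightly. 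Your steps (i)--(iii) therefore cannot be carried out as described.

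The paper's decomposition is not a small perturbation but an explicit \emph{detour} between the same before/after configurations. Reading off Figure~\ref{mixed}(ii): first perform $n$ \emph{primary} moves annihilating the adjacent pairs $(p_2,p_3),(p_4,p_5),\dots,(p_{2n},p_{2n+1})$; this leaves only $p_1$ among the points to be flipped. Then perform the \emph{simple} mixed $(r,s)$-move, which flips $p_1$ alone and is covered by \refsimplemixedinvar. Finally re-create the same pairs $(p_2,p_3),\dots,(p_{2n},p_{2n+1})$; with $r$ now present they are born secondary, so these are \emph{secondary} moves, covered by \refsecunimport. Each step preserves $H_*$, hence so does the composite. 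This is a two-line argument once you see the correct factorization; no genericity, no auxiliary pairs $(r_j,s_j)$, and no merging are needed. Your ``alternative algebraic route'' points in a reasonable direction but remains a sketch; the combinatorial factorization above makes it unnecessary.
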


\begin{proof}
For simple mixed moves, the claim was already proven in \refsimplemixedinvar.
Now consider Figure \ref{mixed} (ii). The mixed move can be recognized as a
sequence of primary moves $(p_2, p_3)$, \dots, $(p_{2n}, p_{2n+1})$, followed by
a simple mixed $(r,s)$-move and a sequence of secondary moves $(p_2, p_3)$,
\dots, $(p_{2n}, p_{2n+1})$. This yields the claim.
\end{proof}

\subsection{The proof of \refstartend}

\label{proof start end}

Let $\phi\in \Diff_\om(M)$ with \hyp\ $x \in \Fix(\phi)$. Let $(x,\phi)$ be csi
and let all primary points be transverse. 
First we generalize \refflipviamixed.

\begin{Lemma}
\label{pert sec unimport}
\label{prim stab}
\begin{enumerate}[(1)]
\item
\label{first}
Let $\phihat \in \Diff_\om(M)$ be a small perturbation of $\phi$ and $\xhat$ the
continuation of $x$.
Let $p_\phi\in \mcHpr(\phi)$ be primary and let $p_\phi$ persist as transverse
homoclinic points $p_\phihat$, but nonprimary. Then there is $q \in
\mcHpr(\phihat)$ which is no continuation of any primary point of $\phi$.
\item
\label{second}
Let $(x,\phi)$ be csi and let all primary points be transverse. Then for
sufficiently small perturbations $\phihat \in \Diff_\om(M)$ of $\phi$, all
primary points remain transverse and no primary points arise or vanish.
\end{enumerate}
\end{Lemma}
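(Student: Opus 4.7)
The plan is to handle both parts by lifting to the universal cover $\Mti$ and confining the argument to the compact region that, by \reffiniteprimary, contains all primary points.

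For (\ref{first}), let $\pti_\phi$ be a lift of $p_\phi$ to $\Wti^u(\xti) \cap \Wti^s(\xti)$ and let $\pti_\phihat$ be the corresponding continuation, sitting in $\Wti^u(\xti_\phihat) \cap \Wti^s(\xti_\phihat)$ for a suitably chosen lift $\xti_\phihat$ close to $\xti$. Primality of $p_\phi$ yields $]\pti_\phi,\xti[_u\ \cap\ ]\pti_\phi,\xti[_s\ = \emptyset$, whereas nonprimality of $p_\phihat$ says that $]\pti_\phihat,\xti_\phihat[_u\ \cap\ ]\pti_\phihat,\xti_\phihat[_s$ contains a contractible homoclinic point. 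Applying the adjacent-primary construction of \refprimarymax\ inside this nonempty set produces a primary $\qti \in \Wti^u(\xti_\phihat) \cap \Wti^s(\xti_\phihat)$. For $\phihat$ sufficiently $C^k$-close to $\phi$, the position of $\qti$ is $C^k$-close to the (empty) set $]\pti_\phi,\xti[_u\ \cap\ ]\pti_\phi,\xti[_s$; hence $q:=\tau(\qti) \in \mcHpr(\phihat)$ cannot be the continuation of any homoclinic point of $\phi$, and in particular not of any primary one.

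For (\ref{second}), choose representatives $p_1,\dots,p_N$ of $\mcHprti$ and form the compact union $K \subset \Mti$ of the lifted frame segments induced by these representatives. By \refprimarymax\ every primary point of $\phi$ lies in $K$. Transversality of the $p_i$ together with compactness of $K$ imply that for $\phihat$ sufficiently close to $\phi$ each $p_i$ admits a transverse continuation $\phat_i$, the frame segments deform $C^k$-continuously, and the combinatorial arrangement of transverse intersections inside $K$ is preserved: no new intersections appear in $K$, none annihilates, and the ordering along each branch is unchanged. It follows directly that no new primary point can be born inside $K$.

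To conclude that each $\phat_i$ stays primary (equivalently, that no primary point of $\phi$ vanishes) I close the loop using part (\ref{first}): if some $\phat_i$ were nonprimary, part (\ref{first}) would produce a primary point $q$ of $\phihat$ that does not continue any primary point of $\phi$, but such a $q$ is forced to lie in a small neighbourhood of $K$, contradicting the combinatorial stability just established. The main technical obstacle throughout is the noncompact oscillation of $W^u$ and $W^s$; this is precisely what \reffiniteprimary\ defuses by confining all primary behaviour to the compact set $K$, on which $C^k$-continuity of the local stable and unstable manifolds applies in the standard way.
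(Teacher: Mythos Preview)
Your argument is correct and matches the paper's approach: for (\ref{first}) you lift to $\Mti$ and extract a new primary point from the nonempty intersection $]\xti_\phihat,\pti_\phihat[_u \cap ]\xti_\phihat,\pti_\phihat[_s$, and for (\ref{second}) you combine transversality, compactness of the frame, and part (\ref{first}) exactly as the paper does. One small caveat: \refprimarymax\ is stated for a \emph{primary} starting point, so invoking it on the nonprimary $\pti_\phihat$ is a slight abuse; the paper instead enumerates the three possible local configurations (Figure~\ref{prim_second_pert}) and reads off the new primary $q$ directly, but your extremal-point reasoning yields the same conclusion once phrased independently of that remark.
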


\begin{proof}
{\em First item:}
We work with the lifted tangles of $\phi$ and $\phihat$, but we drop the tilde
for sake of readability.
The segments $[x,p_\phi]_i$ and $[\xhat, p_\phihat]_i$ are close. Since $p_\phi$
is primary $]x,p_\phi[_u \ \cap\ ]x, p_\phi[_s\ =\emptyset$. But $p_\phihat$ is
nonprimary, thus $]\xhat,p_\phihat[_u \ \cap\ ]\xhat, p_\phihat[_s\ \neq
\emptyset$. $\xhat$ and $p_\phihat$ remain transverse. Figure
\ref{prim_second_pert} (ii) -- (iv) lists the three types which prevent
$p_\phihat$ to be primary. In all three cases, there is a primary $q \in \
]\xhat, p_\phihat[_u\ \cap\ ]\xhat,p_\phihat[_s$ which has no corresponding
point in $]x, p_\phi[_u \ \cap \ ]x, p_\phi[_s$ and thus in $\mcHpr(\phi)$.

{\em Second item:}
Since all primary points of $\phi$ are transverse they persist at least as
transverse intersection points for small perturbations. Any primary-secondary
flip would require the rise of a new primary point.
But primary points only arise in frames and the compactness of the frame
prevents this for sufficiently small perturbations.
\end{proof}

\begin{figure}[h]
\begin{center}

\input{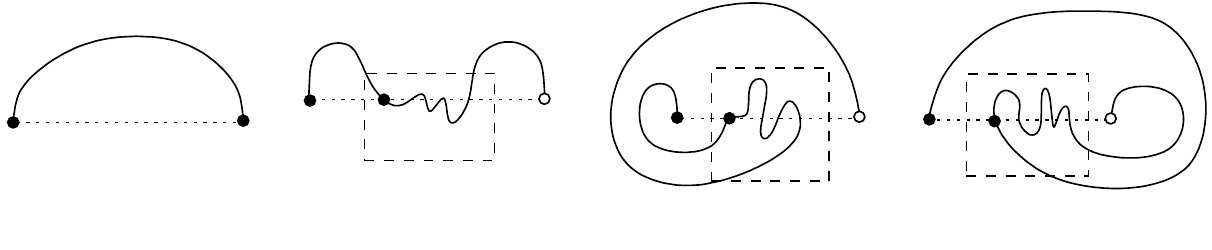_t}

\caption{Causes for primary-secondary flips.}
\label{prim_second_pert}

\end{center}
\end{figure}

Thus also in this generalized situation, a primary-secondary flip is coupled
with the rise of a new primary point.
Now we generalize \refsecunimport.

\begin{Lemma}
\label{sign equal}
Let $\phi \in \Diff_\om(M)$ be csi with $x \in \Fix(\phi)$ \hyp\ and all primary
points transverse. Let $\phihat \in \Diff_\om(M)$ be small perturbation of
$\phi$ such that all primary points persist transverse. Consider primary
$p_\phi$ and $q_\phi$ with $\mu(p_\phi, q_\phi)=1$ and denote their continuation
by $p_\phihat$ and $q_\phihat$. Then $m(p_\phi, q_\phi)=m(p_\phihat,
q_\phihat)$.
\end{Lemma}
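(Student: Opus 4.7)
The plan is to show that each of the three ingredients entering the definition of $m(p,q)$, namely the branch $W_{pq}$ containing both points, the branch orientation $o(W_{pq})$, the segment orientation $o_{pq}$, and the emptiness or non-emptiness of $\mcM(p,q)$, is preserved when passing from $(x,\phi)$ to $(\xhat,\phihat)$ by continuation.

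First I would invoke \refprimstab\ to ensure that, after shrinking the perturbation if necessary, each primary point of $\phi$ persists transversely as a primary point of $\phihat$, no new primary points arise, and no primary-secondary flip occurs. In particular $p_\phihat$ and $q_\phihat$ are primary and the combinatorial arrangement of primary points within the frame induced by $p_\phi$ is combinatorically identical to the one of $\phihat$. Standard continuous dependence of stable and unstable manifolds on compact neighborhoods of the hyperbolic fixed point then implies that on any chosen compact piece of $W^u \cup W^s$ the perturbed branches are $C^k$-close to the original ones, so the branch of $W^u$ (resp.\ $W^s$) containing $p_\phi$ maps under continuation to the branch containing $p_\phihat$, and similarly for $q$. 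Hence the branch $W_{p_\phi q_\phi}$ corresponds to $W_{p_\phihat q_\phihat}$, and since the jump-direction orientation $o(W^i_\pm)$ is determined by the position of the branch relative to $x$, which is preserved, we have $o(W_{p_\phi q_\phi}) = o(W_{p_\phihat q_\phihat})$. The ordering $<_u$, $<_s$ on the branches is likewise preserved by continuation, so the parametrization orientations $o_{p_\phi q_\phi}$ and $o_{p_\phihat q_\phihat}$ agree.

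It remains to show that $\mcM(p_\phi, q_\phi) \neq \emptyset$ if and only if $\mcM(p_\phihat, q_\phihat) \neq \emptyset$. Working on the universal cover and using \refpropclassifind, any element of $\mcM(\pti_\phi,\qti_\phi)$ is an embedding $v$ whose image is a compact topological disk bounded by $[\pti_\phi, \qti_\phi]_u \cup [\pti_\phi, \qti_\phi]_s$. Since these segments are compact, the $C^k$-closeness of the perturbed (un)stable branches deforms $[\pti_\phi, \qti_\phi]_i$ into $[\pti_\phihat, \qti_\phihat]_i$ for $i \in \{s,u\}$, and no new intersection point can appear in the open segments $]\pti_\phihat, \qti_\phihat[_u \cap\ ]\pti_\phihat, \qti_\phihat[_s$ because of \refprimstab\ together with \refsecunimport\ (the emergence of a non-primary intersection inside the disk would correspond to a secondary move, but secondary moves do not affect embeddings between primary points). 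Hence the embedding persists to an embedding in $\mcM(\pti_\phihat,\qti_\phihat)$. Reversing the roles of $\phi$ and $\phihat$ gives the converse. Consequently the three data defining the sign coincide, and $m(p_\phi, q_\phi) = m(p_\phihat, q_\phihat)$.

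The main obstacle is the last step: ruling out the sudden creation or annihilation of the embedding. This is handled by combining the compactness of the frame (so that persistence is a $C^k$-small perturbation statement for a fixed compact piece of tangle) with \refprimstab\ and the classification of secondary moves via \refsecunimport, both of which prevent any topological change of the immersion spaces between primary points.
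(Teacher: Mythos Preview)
Your proposal is correct and follows essentially the same route as the paper: reduce to showing $\mcM(p_\phi,q_\phi)\neq\emptyset \iff \mcM(p_\phihat,q_\phihat)\neq\emptyset$ on the universal cover, and then rule out new intersections in $]\pti,\qti[_u\cap\,]\pti,\qti[_s$ by appealing to the case analysis behind \refsecunimport. The only nuance is that \refsecunimport\ is literally a statement about \emph{moves} in an isotopy rather than about arbitrary small perturbations; the paper handles this by explicitly noting that the case analysis of Figure~\ref{rs perturb fish} must be rechecked with the more general perturbation pictures of Figure~\ref{prim_second_pert}, whereas you invoke \refsecunimport\ directly --- this is a presentational shortcut rather than a logical gap, since the same combinatorial check establishes both versions.
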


\begin{proof}
For simplicity, abbreviate $p:=p_\phihat$ and $q:=q_\phihat $
Clearly $\mu(p_\phi, q_\phi)=\mu(p, q)$ and if $m(p_\phi,q_\phi) \neq 0 \neq
m(p,q)$ then their signs coincide. Thus it is enough to show $\mcM(p_\phi,
q_\phi) \neq \emptyset$ if and only if $\mcM(p,q) \neq \emptyset$. We will work
on the universal cover with the lifted tangles. 

We have to check if the proof of \refsecunimport\ carries over to our more
general situation.
Let $\mcM(p_\phi, q_\phi) \neq \emptyset$ and assume $\mcM(p,q)=\emptyset$, i.e.
$]p,q[_u\ \cap\ ]p,q[_s\ \neq \emptyset$.

For that, we have to admit perturbations as in the dashed boxes in Figure
\ref{prim_second_pert} and check a generalization of Figure \ref{rs perturb
fish} etc. which yields the claim.
\end{proof}

\begin{proof}[Proof of \refstartend]
For sufficiently small perturbations, the primary points persist by
\refprimstab. Thus the generator set of primary homoclinic chain complex stays
unchanged. Moreover, the boundary operator persists due to \refsignequal. Thus
the homology remains unchanged.
\end{proof}

\section{Dynamics and homoclinic Floer theory}

\subsection{Conjugacy} 

Since conjugacy does not affect the intersection behaviour of homoclinic tangles
one expects primary Floer homology to be invariant under conjugacy.
Nevertheless, one has to be a little careful. If one is only interested in the
topological information, then it is enough to have a homeomorphisms $h$
conjugating $\phi$, $\psi \in \Diff_\om(M)$, i.e. $\phi \circ h = h \circ \psi$:
If $x \in \Fix(\psi)$ then $h(x) \in \Fix(\phi)$ and $H_*(x, \psi)=H_*(h(x),
\phi)$ since $h(W^i(x,\psi))=W^i(h(x),\phi)$ for $i \in \{s,u\}$. 

But if symplectic properties should be preserved (as for example the symplectic
volume $\int v^* \om$ of an immersion $v$) we have to require $h$ to be
symplectic.

\subsection{{\boldmath $\rk H_*(x, \phi) \leq \rk H_*(x, \phi^n)$}}

Now denote by $\langle p_1 \rangle , \dots, \langle p_k\rangle $ the generators
of $C_*(x,\phi)$ and set $p_i^j:=\phi^j(p_i)$.  
For $n \in \N_0$, we have $W_i^{\phi}=W_i^{\phi^n}$ for $i \in \{s,u\}$ and
$W_i^{\phi}=W_j^{\phi^{-n}}$ for $i \neq j \in \{s,u\}$. Note that the number of
equivalence classes multiplies: $C_*(x, \phi^n)$ is generated by $\langle p_1^0
\rangle, \dots, \langle p_k^0 \rangle, \langle p_1^1 \rangle, \dots, \langle
p_k^{n-1} \rangle$.
Abbreviate $\Z_n:=\Z \slash n \Z = \{\bar{0}, \bar{1}, \dots, \overline{n-1}\}$
and set $\phi_*^l=\bar{l}$. There is a $\Z_n$-action on the generators via 
\beqs
\Z_n \x C_*(x, \phi^n) \to C_*(x, \phi^n), \qquad \phi_*^l.\langle p_i^j \rangle
:= \langle p_i^{j+l \ mod\ n} \rangle = \langle \phi^l(p_i^j) \rangle
\eeqs
and extend it by linearity to the complex. We notice 
$\phi^l_*.(\del \langle p_i^j \rangle) = \del \langle \phi^l(p_i^j) \rangle$
such that the $\Z_n$-action descends to homology. 
If we use the $m(p,q)$-signs we assume $\Q$-coefficients and so we do for
$n(p,q)$-signs if $\phi$ is $W$-orientation preserving $\phi$ and $ n \in \N_0$.
In the orientation reversing case, assume $\Z_2$ as coefficient ring for the
$n(p,q)$-signs if $n=2m+1 \in \N$ odd. Then \refhomcohom\ allows us to treat
simultanously also negative exponents and we define
\begin{align*}
f: C_*(x,\phi^n;\K)\simeq C^{-*}(x, \phi^{-n}, \K) \to C_*(x, \phi;\K), &&&
f(\langle p_i^j \rangle):= \langle p_i \rangle, \\
g: C_*(x, \phi; \K) \to C_*(x, \phi^n;\K)\simeq C^{-*}(x, \phi^{-n};\K) , &&&
g(\langle p_i \rangle):= \frac{1}{n}\sum_{j=0}^{n-1} \langle p_i^j \rangle
\end{align*}
where $\K$ stands for the suitable coefficient ring. $f$ and $g$ are chain maps
and we compute
$f \circ g = \Id _{C_*(x, \phi;\K)}$.
Denote by $g_*$ and $f_*$ the induced maps on the (co)homology.

\begin{Proposition}
\label{dimleq}
$g_*$ is injective, $f_*$ surjective and
\beqs
\rk H_*(x, \phi; \K) \leq \rk H_*(x, \phi^n; \K)= \rk H^{-*}(x, \phi^{-n}; \K).
\eeqs
The difference is measured by the long exact sequence
\beqs
\cdots \to H_l (\ker f;\K) \to H_l(x, \phi^n;\K) \to H_l(x, \phi; \K) \to
H_{l-1}(\ker f, \K)
\to \cdots
\eeqs
\end{Proposition}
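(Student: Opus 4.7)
The core of the argument is already laid down in the excerpt: the maps $f$ and $g$ are chain maps satisfying $f\circ g=\Id_{C_*(x,\phi;\K)}$. The plan is to extract all the claims of the proposition from this single identity together with the surjectivity of $f$ at the chain level.

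First I would pass the identity $f\circ g=\Id$ to homology, obtaining $f_*\circ g_*=\Id_{H_*(x,\phi;\K)}$. This immediately forces $g_*$ to be injective and $f_*$ to be surjective, which is the first assertion. The rank inequality is then a direct consequence of surjectivity of $f_*$: if $H_*(x,\phi;\K)$ is realized as a quotient of $H_*(x,\phi^n;\K)$, then $\rk H_*(x,\phi;\K)\leq \rk H_*(x,\phi^n;\K)$. The equality $\rk H_*(x,\phi^n;\K)=\rk H^{-*}(x,\phi^{-n};\K)$ is just a restatement of \refhomcohom\ applied to the symplectomorphism $\phi^n$.

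For the long exact sequence, I would build a short exact sequence of chain complexes
\begin{equation*}
0 \lra \ker f \lra C_*(x,\phi^n;\K) \stackrel{f}{\lra} C_*(x,\phi;\K) \lra 0.
\end{equation*}
The map $f$ is clearly surjective on the chain level, since $f(\langle p_i^0\rangle)=\langle p_i\rangle$ hits each generator of $C_*(x,\phi;\K)$. That $\ker f$ is a subcomplex follows from $f$ being a chain map (so $\del$ restricts to $\ker f$), and the grading is preserved because $f$ preserves the Maslov grading (the continuations $p_i^j=\phi^j(p_i)$ have the same Maslov index as $p_i$ by \refmuZcomp). Applying the zig-zag/snake lemma to this short exact sequence produces the desired long exact sequence
\begin{equation*}
\cdots \lra H_l(\ker f;\K) \lra H_l(x,\phi^n;\K) \stackrel{f_*}{\lra} H_l(x,\phi;\K) \stackrel{\partial}{\lra} H_{l-1}(\ker f;\K) \lra \cdots
\end{equation*}
and, as a sanity check, exactness plus the surjectivity of $f_*$ recovers the fact that the connecting homomorphism lands inside the part of $H_{l-1}(\ker f;\K)$ not detected by the inclusion.

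The only genuinely delicate point is justifying that the formulas for $f$ and $g$ give well-defined chain maps; this rests on the compatibility of the $\Z_n$-action with $\del$ already noted in the excerpt (namely $\phi^l_*.\del\langle p_i^j\rangle=\del\langle\phi^l(p_i^j)\rangle$), together with the fact that averaging over the $\Z_n$-orbit commutes with $\del$. Denominators of $n$ in $g$ are the reason we must work over $\Q$ (or, in the $W$-orientation reversing case with odd $n$, over $\Z_2$ so that $n$ is invertible); this is the mild technical obstacle and is precisely why the coefficient rings were chosen as above. Once this is in hand, the rest is formal homological algebra.
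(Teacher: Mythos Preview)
Your proposal is correct and follows essentially the same approach as the paper: both deduce injectivity of $g_*$ and surjectivity of $f_*$ from $f\circ g=\Id$, and both derive the long exact sequence from the short exact sequence $0\to\ker f\to C_*(x,\phi^n)\to C_*(x,\phi)\to 0$. The only cosmetic difference is that the paper first passes to the quotient $C_*(x,\phi^n)/\ker f$ and then identifies it with $C_*(x,\phi)$ via an explicit chain isomorphism $h:[c]\mapsto\sum_{l=0}^{n-1}\phi_*^l(c)$ onto $\Img(g)$, whereas you use the surjection $f$ directly; your route is slightly more streamlined but the content is identical.
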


\begin{proof}
We drop the coefficient ring $\K$ in the notation in favour of better
readability. $f \circ g = \Id _{C_*(x, \phi)}$ implies the injectivity of $g_*$
and surjectivity of $f_*$ which yield the dimension estimates. The range of $g$
are the invariants under the $\Z_n$-action and the kernel of $f$ the
coinvariants which are both subcomplexes of $C_*(x, \phi^n)$. We obtain the
short exact sequence of chain complexes
\begin{equation}
\label{kes}
((\ker f)_*, \del) \hookrightarrow (C_*(x, \phi^n), \del) \twoheadrightarrow
\left( \frac{C_*(x, \phi^n)}{(\ker f)_*}, \delbar \right)
\end{equation}
where $\delbar $ is induced by the projection. Moreover 
\beqs
h:  \left(\frac{C_*(x, \phi^n)}{(\ker f)_*}, \delbar\right) \to (\Img(g)_*,
\del), \qquad [c] \mapsto \sum_{l=0}^{n-1}\phi_*^l(c)
\eeqs
is an isomorphism and satisfies $h \circ \delbar = \del \circ h$,  thus an
isomorphism of chain complexes. Since also $g: C_*(x, \phi) \to \Img(g)_*$ is an
isomorphism of chain complexes we obtain by means of the long exact sequence of
\refkes\
\beqs
\cdots \to H_l (\ker f) \to H_l(x, \phi^n) \to H_l(x, \phi) \to H_{l-1}(\ker
f)\to \cdots
\eeqs
\end{proof}

Now let us discuss under which circumstances we might have equality in
\refdimleq. We call a smooth Hamiltonian function $H: \R \x M \to \R$ (with
compact support) {\em normalized} if $\int_M H_t \ d\Vol=0 $ for all $t$ where
$H_t:=H(t, \cdot)$. Let $X$ be its nonautonomous vector field. Denote by
$\phi_{(t, t_0)}$ the nonautonomous flow of $\zdot(t)=X(t, z(t))$ starting at
time $t_0$, i.e. $\phi:=\phi_{(1,0)}$ is the usual time-1 map. If we assume in
addition $H(t, \cdot)=H(t+1, \cdot)$ then $\phi_{(n, 0)}=\phi^n_{(1,0)}=\phi^n$
and $\phi$ and $\phi^n$ are joint by the isotopy $\tau \mapsto \phi_{(1+
(n-1)\tau, 0)}$.
Changing the parametrization of a Hamiltonian path is easy: Given $\tau \mapsto
\psi_{(\tau, 0)}$ with Hamiltonian $F(t,z)$, we obtain $\tau \mapsto
\psi_{(b(\tau), 0)}$ using $b'(t)F(b(t), z)$ as Hamiltonian.

Conversely, Banyaga \cite{banyaga} proved that for every path of Hamiltonian
diffeomorphisms $\tau \mapsto \psi_\tau$, there is a normalized Hamiltonian
having $\psi_\tau=\psi_{(\tau, 0)}$ as nonautonomous flow. Therefore we
conclude

\begin{Corollary}
Equality for Hamiltonian diffeomorphisms in \refdimleq\ is tied to the question
of invariance of primary Floer homology under Hamiltonian isotopies, i.e. the
question if Hamiltonian isotopies fulfil the requirements of \refinvthcpt. In
\refcomment, we conjecture the answer to be affirmative.
\end{Corollary}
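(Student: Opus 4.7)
The plan is to use the Hamiltonian path $\tau \mapsto \phi_{(1+(n-1)\tau,0)}$ exhibited immediately before the corollary as a bridge between $(x,\phi)$ and $(x,\phi^n)$ in $\Diff_\om(M)$. First I would verify that this is an isotopy in the sense of Section \ref{proof start end}: since the underlying 1-periodic Hamiltonian generates $\phi$ and $x$ is a hyperbolic fixed point of $\phi$, one may arrange $X(t,x)=0$ for all $t$, so that $x_\tau \equiv x$ serves as the continuation of the hyperbolic fixed point along the entire isotopy, hyperbolicity being preserved by continuous dependence of the spectrum of the linearization. Thus the path is a symplectic (in fact Hamiltonian, by Banyaga's reparametrization) isotopy from $\phi$ to $\phi^n$ joining the same fixed point $x$.

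The forward direction is then immediate. If the isotopy just constructed satisfies the csi hypothesis and all primary points remain transverse along $\tau$, which is precisely the hypothesis of \refinvthcpt\ transported into the Hamiltonian class, then \refinvthcpt\ yields $H_*(x,\phi)\simeq H_*(x,\phi^n)$. Combined with the unconditional estimate $\rk H_*(x,\phi) \leq \rk H_*(x,\phi^n)$ of \refdimleq, this forces equality of ranks. Conversely, the long exact sequence in \refdimleq\ shows that strict inequality occurs exactly when $H_*(\ker f)$ contributes nontrivially, i.e.\ when there exist homology classes in $H_*(x,\phi^n)$ that fail to be $\Z_n$-invariant. Any such class, pushed through the isotopy above, would witness a failure of invariance of primary Floer homology in the Hamiltonian category. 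Hence equality for all $n \in \N$ and all Hamiltonian $\phi$ is equivalent to the conclusion of \refinvthcpt\ holding for the specific isotopies $\tau \mapsto \phi_{(1+(n-1)\tau,0)}$.

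The hard part is showing that the csi condition actually persists along the whole interval $\tau \in [0,1]$. Strong intersection of each pair of branches, contractibility of at least one homoclinic point within each such pair, and the generic transversality of primary points must all be maintained throughout the isotopy, and this is precisely the content of the open conjecture in \refcomment\ that Hamiltonian diffeomorphisms are naturally csi. Banyaga's reparametrization theorem suffices to keep the path within the Hamiltonian class, but offers no protection against transient loss of intersection or homoclinic tangencies between $\tau=0$ and $\tau=1$; verifying or bypassing this is the principal analytic obstacle, and is why the statement is phrased as a linkage rather than a bare equivalence.
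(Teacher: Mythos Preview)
Your proposal is correct and follows the paper's approach: the corollary has no separate proof in the paper, being stated as a direct consequence of the preceding paragraph, which exhibits the Hamiltonian isotopy $\tau \mapsto \phi_{(1+(n-1)\tau,0)}$ joining $\phi$ to $\phi^n$ and invokes Banyaga's theorem on Hamiltonian paths. Your additional remarks on arranging the fixed-point continuation via $X(t,x)=0$ and on the converse direction via the long exact sequence go beyond what the paper writes, but they are consistent with its intent, and the corollary is in any case phrased as a soft linkage rather than a precise equivalence.
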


If there is (conjecturally) equality for Hamiltonian diffeomorphisms in
\refdimleq\ we need to know how large the groups of Hamiltonian diffeomorphisms
$\Ham(M, \om) \subset \Diff_\om(M)$ actually is. Assume $M$ to be closed and
denote by $\Diff_\om^0(M)$ the component of the identity in $\Diff_\om(M)$. The
difference between $\Diff_\om^0(M)$ and $\Ham(M, \om)$ is measured via
\beqs
\Diff_\om^0(M) \slash \Ham(M, \om) = H^1(M, \R) \slash \Ga
\eeqs
where $\Ga \subset H^1(M, \R) $ is the so-called {\em flux group} (cf.\
Polterovich \cite{polterovich2}). Thus for manifolds with vanishing first
cohomology class, we have $ \Diff_\om^0(M) = \Ham(M, \om)$.
Examples with $H(x, \phi) < H(x, \phi^n)$ might arise for non-Hamiltonian
symplectomorphisms, especially symplectomorphisms not isotopic to the identity.

\vsp

Another interesting aspect is the relationship between $H(x, \phi^n)$ and
$\mathfrak H_m(x, \phi)$. One might ask if actually $H(x, \phi^n)$ might somehow
converge to $\mathfrak H_m(x, \phi)$. This turns out to be not true at least for
Hamiltonian diffeomorphisms.

\begin{Proposition}
\label{infinitehom}
There is $\phi \in \Ham(M, \om)$ with $H(x, \phi^n) \neq \mathfrak H(x, \phi)\
\forall\ n$.
\end{Proposition}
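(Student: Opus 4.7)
The plan is to exhibit the figure-eight tangle of \refchapterexamples---which arises from Melnikov's method applied to a figure-eight pendulum Hamiltonian as described before \refhomoclinicloops---as the desired $\phi \in \Ham(M,\om)$. After a smooth cutoff, this $\phi$ descends to a Hamiltonian diffeomorphism of any closed surface of genus $g \geq 1$. I will contrast $\mathfrak{H}_*(x,\phi)$ and $H_*(x,\phi^n)$ in Maslov degree $-1$, showing that $\mathfrak{H}_{-1}(x,\phi)=0$ while $H_{-1}(x,\phi^n)\simeq\Z$ for every $n\geq 1$; this yields the claimed inequality at once.

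For $\mathfrak{H}_{-1}(x,\phi)$, the $\Z$-equivariance of $\mathfrak{d}$ together with the formulas $\mathfrak{d} p = q-q^{-1}+b-\bti^2$ and $\mathfrak{d}\pti=-\qti+\qti^{-1}+\bti-b^4$ gives $\mathfrak{d} p^n = q^n-q^{n-1}+b^n-\bti^{n+2}$ and $\mathfrak{d}\pti^n=-\qti^n+\qti^{n-1}+\bti^n-b^{n+4}$. A chain $\sum_n(\lambda_n p^n+\mu_n\pti^n)\in\mathfrak{C}_{-1}$ is a cycle iff the coefficient of each generator of $\mathfrak{C}_{-2}$ vanishes; matching coefficients of $q^k$ yields $\lambda_k=\lambda_{k+1}$ for every $k\in\Z$, which together with the finite-support condition forces $\lambda_n\equiv 0$, and matching $\qti^k$-coefficients similarly forces $\mu_n\equiv 0$. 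Hence $\ker\mathfrak{d}_{-1}=0$ and $\mathfrak{H}_{-1}(x,\phi)=0$.

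For $H_{-1}(x,\phi^n)$ with $n\geq 1$, each primary class of $\phi$ splits into $n$ classes $[\cdot^0],\dots,[\cdot^{n-1}]$ under $\phi^n$-equivalence, and the boundary becomes cyclic modulo $n$, e.g., $\del[p^k]=[q^k]-[q^{k-1\bmod n}]+[b^k]-[\bti^{(k+2)\bmod n}]$. A cycle $\sum_{k=0}^{n-1}(\lambda_k[p^k]+\mu_k[\pti^k])$ must satisfy $\lambda_k=\lambda_{k+1\bmod n}$ (from $[q^k]$-coefficients) and $\mu_k=\mu_{k+1\bmod n}$ (from $[\qti^k]$-coefficients), so $\lambda$ and $\mu$ are constant on $\Z/n\Z$; the $[b^k]$- and $[\bti^k]$-coefficients then force $\lambda=\mu$. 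Since no primary points of $\phi^n$ have Maslov index $0$ we have $\Img\del_0=0$, whence $H_{-1}(x,\phi^n)=\Z\cdot\sum_{k=0}^{n-1}([p^k]+[\pti^k])\simeq\Z$.

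Comparing the two steps gives $H_{-1}(x,\phi^n)\simeq\Z\neq 0=\mathfrak{H}_{-1}(x,\phi)$ for every $n\in\N$, proving the proposition. The key conceptual point---and the step requiring the most care in the write-up---is the second one: the cyclic quotient $C_*(x,\phi^n)$ permits a nontrivial cyclic-constant cycle of rank one, whereas the finite-support requirement on $\mathfrak{C}_*$ forbids any nonzero cycle with constant coefficients, and this mechanism persists for every $n\geq 1$.
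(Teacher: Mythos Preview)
Your proof is correct and follows essentially the same strategy as the paper: exhibit an explicit tangle and show that $\mathfrak H_{-1}(x,\phi)=0$ while $H_{-1}(x,\phi^n)\simeq\Z$ for every $n\geq 1$, the key mechanism in both cases being that the telescoping relations $\lambda_k=\lambda_{k+1}$ force zero under the finite-support constraint of $\mathfrak C_*$ but admit a one-dimensional constant solution on the cyclic quotient $C_*(x,\phi^n)$. The only difference is the choice of example: the paper uses the simpler two-class tangle of Figure~\ref{quadr cubic}(a) (primary classes $\langle p\rangle,\langle q\rangle$ with $\mathfrak d p^l=q^l-q^{l-1}$), where the computation is immediate, whereas you work with the eight-class figure-eight tangle, which requires tracking the additional $b,\bti,\qti$ coefficients to pin down $\lambda=\mu$. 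Your argument is sound, but the simpler example makes the phenomenon more transparent with less bookkeeping.
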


\begin{proof}
Consider the homoclinic tangle in Figure \ref{quadr cubic} (a). There are
exactly two distinct equivalence classes of primary points. Let us denote them
by $\langle p \rangle$ and $\langle q \rangle$ and assume w.l.o.g. $\mu(\langle
p \rangle)=-1$ and $\mu(\langle q \rangle)=-2$. Then $\del \langle p \rangle =
\langle q \rangle - \langle q \rangle =0$ and $\del \langle q \rangle =0$ and we
obtain $H_{-1}(x, \phi) \simeq \Z \langle p \rangle$ and $H_{-2}(x, \phi) \simeq
\Z \langle q \rangle$. Moreover, we calculate explicitly $H_{-1}(x, \phi^n)
\simeq \Z \simeq H_{-2}(x, \phi^n)$ for $n \in \N$.

On the other hand, we compute $\mathfrak C_{-1}= \Span_\Z\{p^l \mid l \in \Z\}$
and $\mathfrak C_{-2}= \Span_\Z \{q^l \mid l \in \Z \}$ and $\mathfrak d p^l=q^l
- q^{l-1}$ and $ \mathfrak d q^l=0$ for all $l \in \Z$.
Thus $\mathfrak H_{-1}=0$ and $\mathfrak H_{-2}= {\Span_\Z \{q^n \mid n \in
\Z\}}\slash {\Span_\Z\{q^n + q^{n-1}\}} \simeq \Z$ and therefore $\mathfrak
H_{-1} \neq H_{-1}$.
\end{proof}

\subsection{{\boldmath $\rk \Hti _*(x, \phi) < \rk \Hti_*(x, \phi^n)$}}

In this section, we define a version of homoclinic Floer homology based on
contractible semi-primary points, called {\em semi-primary Floer homology}. The
construction is analogous to primary Floer homology except for the invariance
property in \refinvthcpt. The weaker invariance property of semi-primary Floer
homology allows a better sensitivity for the underlying symplectomorphism. 
For example, certain interactions of the tangle and the topology of the manifold
are noticed to which primary Floer homology is oblivious. Moreover, semi-primary
Floer homology distinguishes between $\phi$ and $\phi^n$ for certain
symplectomorphisms $\phi$.

\vsp

Denote by $\mcH_s:=\{p \in \mcH_{[x]} \mid \ ]x,p[_u \ \cap\ ]x, p[_s \ =
\emptyset\}$ the set of {\em contractible} semi-primary points and by
$\mcHti_s:=\mcH_s \slash_\sim$ the set of contractible semi-primary equivalence classes where
$p \sim q$ if and only if $p=q^n$ for some $n \in \Z$. 
As before, the equivalence classes are denoted by $\langle p \rangle$. On
$\R^2$, the notion of primary and semi-primary coincide since $\R^2$ is
contractible. Thus assume from now on that $(M, \om)$ is a surface with genus $g
\geq 1$.

\vsp

We define the semi-primary Floer chain complex via 
\beqs
\Cti_k:=\Cti(x, \phi):=\bigoplus_{\stackrel{\langle p \rangle \in
\mcHti_s}{\mu(\langle p \rangle)=k}} \Z \langle p \rangle, \qquad \delti \langle
p \rangle := \sum_{\stackrel{\langle q \rangle \in \mcHti_s}{\mu(\langle q
\rangle ) = \mu(\langle p \rangle) -1}} m(\langle p \rangle , \langle q \rangle)
\langle q \rangle
\eeqs
and extend the boundary operator linearly to $\delti : \Cti_* \to \Cti_{*-1}$. 

\begin{Theorem}
\label{semiprimhom}
It holds $\delti \circ \delti=0$ and $\Hti_*(x, \phi):=\ker \delti_* \slash \Img
\delti_{*+1}$ is called {\em semi-primary Floer homology}.
\end{Theorem}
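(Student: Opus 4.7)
The plan is to mimic the proof of \refdelfraksquare\ almost verbatim, substituting ``contractible semi-primary'' for ``primary'' throughout. Every contractible semi-primary point $p$ (i.e.\ $[p]=[x]$ and $]x,p[_u \cap\ ]x,p[_s = \emptyset$) is in particular primary, so the signs $m(\langle p \rangle, \langle q \rangle)$, the Maslov grading bound $\mu \in \{\pm 1, \pm 2, \pm 3\}$ and the index classifications of Section~3 all apply without change. Consequently $\Cti_* \hookrightarrow C_*$ as a graded subgroup, though not as a subcomplex, since $\delti$ is a truncation of $\del$.

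The core reduction is identical to the proof of \refdelfraksquare: for fixed $\langle p \rangle, \langle r \rangle \in \mcHti_s$ with $\mu(p,r)=2$, it suffices to show
\beqs
\sum_{\substack{\langle q \rangle \in \mcHti_s \\ \mu(\langle q \rangle) = \mu(\langle p \rangle)-1}} m(\langle p \rangle, \langle q \rangle)\, m(\langle q \rangle, \langle r \rangle) = 0.
\eeqs
Applying \refprimcutth\ to each $w \in \mcN(p,r)$ produces the two primary cutting points $q_u, q_s$ satisfying $m(p, q_u) m(q_u, r) = -m(p, q_s) m(q_s, r)$ by \refsignskewsym. Cancellation restricted to contractible semi-primary $q$ then rests on the following dichotomy: for contractible semi-primary $p$ and $r$, the cutting points $q_u, q_s$ produced by $w$ are \emph{either both semi-primary or both not}.

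To establish the dichotomy I would work in the universal cover and re-examine the cases of \refpropclassifprimary\ displayed in Figure~\ref{primary cutting} under the added hypothesis that $]x,p[_u \cap\ ]x,p[_s$ and $]x,r[_u \cap\ ]x,r[_s$ are empty (not merely free of contractible points). In each configuration, the heart $w$ together with the segments $[x, p]_i$ and $[x, r]_i$ bounds a region in $M$ whose topology controls the possible noncontractible intersections in $]x, q_u[_u \cap\ ]x, q_u[_s$ symmetrically with those in $]x, q_s[_u \cap\ ]x, q_s[_s$: any noncontractible obstruction at $q_u$ is carried by a deck transformation to one at $q_s$ and conversely. Once this symmetry is verified, the inner sum above is either empty (both cutting points not semi-primary) or reduces to the two canceling cutting-point terms, yielding $\delti \circ \delti = 0$.

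The main obstacle is precisely this dichotomy. In the primary setting the clause ``$\cap\ \mcH_{[x]}$'' in the definition makes the pairing transparent already in the universal cover, reducing \refprimcutth\ to a clean combinatorial statement about $\Wti^u \cap \Wti^s$. The semi-primary version additionally requires tracking deck translates of noncontractible intersections and matching them across the two cuts, a finite but more delicate combinatorial enumeration than in the primary case; the rest of the argument, including the sign relation and the reduction to the inner sum, is formally identical.
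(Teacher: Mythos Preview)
Your reduction to the cutting dichotomy is exactly right and matches the paper's strategy: the only new content beyond \refdelfraksquare\ is to verify that for contractible semi-primary $p,r$ with $\mu(p,r)=2$ and $w\in\mcN(p,r)$, the cutting points $q_u,q_s$ are either both contractible semi-primary or both not.

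Where you diverge from the paper is in \emph{how} you propose to verify this. You suggest working on the universal cover and tracking deck translates of noncontractible intersections, matching obstructions at $q_u$ symmetrically with those at $q_s$. The paper instead works directly on $M$: since $p$ and $r$ are contractible and semi-primary, the loops $[p,x]_u\cup[p,x]_s$ and $[r,x]_u\cup[r,x]_s$ bound \emph{embedded} 2-gons on $M$ itself, not merely on $\Mti$. Hence the configurations of Figure~\ref{primary cutting} can be read directly on $M$. In the left and right columns (the cases $(\mu(p,x),\mu(x,r))\in\{(3,-1),(-1,3)\}$) the cutting takes place entirely inside an embedded 2-gon of index $\pm3$ on $M$, so $q_u$ and $q_s$ are automatically contractible semi-primary; in the middle column neither cutting point is even primary, regardless of whether the ``overlapping nose'' wraps around genus. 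No deck-transformation bookkeeping is needed.

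Your route would presumably also succeed, but the symmetry you invoke (``any noncontractible obstruction at $q_u$ is carried by a deck transformation to one at $q_s$'') is not quite the right formulation---$q_u$ and $q_s$ are not deck-translates of one another---and would indeed require the delicate enumeration you flag as the main obstacle. The paper sidesteps all of this by observing that the semi-primary hypothesis already forces the relevant picture to embed in $M$, so the primary cutting analysis applies verbatim there rather than only on the cover.
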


\begin{proof}
Since the set of contractible semiprimary points is a subset of the set of primary points most of the proofs for primary Floer homology carry over. But we have to check the cutting procedure: since we are restricting the boundary operator to the subset $\mcH_s \subset \mcHpr$ of the primary points we have to make sure that the cutting points are also in $\mcH_s$.

\vsp

More precisely, we have to proof an analogon of \refpropclassifprimary\ and \refprimcutth\ for contractible semiprimary points. This can be deduced from the already existing primary classification \refpropclassifprimary\ and Figure \ref{primary cutting} as follows: 

Let $p$, $r \in \mcH_s$ with $\mu(p,r)=2$. Since $p$ and $r$ are contractible and semiprimary, they satisfy $]p,x[_s \ \cap \ ]p,x[_u\ = \emptyset\ = \ ]r,x[_s \ \cap \ ]r,x[_u $ and both `loops' $[p,x]_s \ \cup \ [p,x]_u$ and $[r,x]_s \ \cup \ [r,x]_u $ span a 2-gon. This yields us the same pictures and positioning for $p$ and $r$ as in Figure \ref{primary cutting}, but this time on the manifold and not on the universal cover. Then we notice that we have 2-gons with vertices $p$ and $r$ of relative Maslov index $\pm 2$ in the middle column of Figure \ref{primary cutting} and 2-gons with vertices $x$ and $r$ resp.\ $p$ of relative index $\pm 3$ in the right and left column of Figure \ref{primary cutting}. We observe that, in the left and right column, the cutting procedure takes place {\em within} this 2-gon of relative Maslov index $\pm 3$. Thus it does not matter if we are on the universal cover or on the manifold itself --- the cutting works in the very same way as for primary points and provides us with two contractible semiprimary cutting points $q_s$ and $q_u$.

Now let us consider the middle column. Remember, the 2-gon with vertices $r$ and $p$ lies on the manifold. Consider the small `overlapping nose'. $q_s$ and $q_u$ are not primary whether or not the `overlapping nose' wraps around some genus like in Figure \ref{bad flip}. And neither are they contractible semiprimary.

\vsp

Thus we reproved the classification in Figure \ref{primary cutting} for contractible semiprimary points which implies an analogon of the cutting procedure \refprimcutth\ (and of \refsignskewsym). Altogether, we deduce $\delti \circ \delti=0$ and the existence of semiprimary Floer homology.
\end{proof}

The difference of $\mcHpr$ and $\mcH_s$ is only noticable on manifolds with genus and their
different properties come apparent in the following example. A brief look at Figure \ref{bad flip} tells us that, on the one hand, a semi-primary point is lost as generator since the move turns $q$ from
(semi-)primary to primary and that, on the other hand, the new arising semi-primary point $s$ is not
contractible. This observation leads to

\begin{Example}
Consider the example of Figure \ref{bad flip} and assume for simplicity, that
only the branches containing $p$ intersect (which is entirely possible if $M$ is
not closed). Then the semi-primary Floer homology is given by
\beqs
\Hti_{-1}(x, \phi) \simeq \Z \quad \mbox{and} \quad \Hti_{*}(x, \phi)=0 \mbox{
otherwise}.
\eeqs
Let $\si(n)=-1$ for $n \in \Z^{>0}$ and $\si(n)=1$ for $n \in \Z^{<0}$. Then we
obtain
\beqs
\Hti_{\si(n) }(x, \phi^n) \simeq \Z^n \quad \mbox{and} \quad \Hti_{*}(x,
\phi^n)=0 \mbox{ otherwise}.
\eeqs
\end{Example}

\begin{proof}
In Figure \ref{bad flip}, $p$ is (semi-)primary and contractible with
$\mu(p)=-1$. $q$ is primary (and contractible) with $\mu(q)=-2$, but not
semi-primary. $s$ is semi-primary and not contractible. $r$ is secondary and not
contractible. Thus we obtain $\Cti_{-1}(x, \phi) = \Z \langle p \rangle$ and
$\Cti_*(x, \phi)=0$ otherwise. The boundary operator is given by $\delti \langle
p \rangle =0$ and thus $\Hti_*(x, \phi)=\Cti_*(x,\phi)$.

Now we consider iterates of the symplectomorphism. For $n \in \Z^{>0}$, we
obtain the complex $\Cti_{-1}(x, \phi^n)=\Span_\Z\{\langle p^0 \rangle, \dots,
\langle p^{n-1} \rangle\} $ and $\Cti_*(x, \phi^n)=0$ otherwise. The boundary
operator is given by $\delti \langle p^l \rangle =0$ for $0 \leq l \leq n-1$ and
thus $\Hti_*(x, \phi^n)= \Cti_*(x, \phi^n) \simeq \Z^n$. For $n \in \Z^{<0}$,
$W^s$ and $W^u$ are exchanged which leads to the change of the Maslov index.
\end{proof}

The computation of primary Floer homology for $\phi$ and $\phi^n$ for the
example in Figure \ref{bad flip} was partially done before \refmixedinvar\ and
in the proof of \refinfinitehom\ and we recall
\beqs
H_{-1}(x, \phi)=C_{-1}(x, \phi)\simeq \Z \quad \mbox{and} \quad H_{-2}(x,
\phi)=C_{-2}(x, \phi)\simeq \Z.
\eeqs
For higher iterates with $n \in \Z^{>0}$, we found $C_{-1}(x, \phi^n)=
\Span_\Z\{\langle p^0 \rangle, \dots, \langle p^{n-1} \rangle\} $ and $C_{-2}(x,
\phi^n)=\Span_\Z\{\langle q^0 \rangle \dots \langle q^{n-1} \rangle\}$ and
$H_{-1}(x, \phi^n)= H_{-1}(x, \phi) \simeq \Z$ and $H_{-2}(x, \phi^n)= H_{-2}(x,
\phi) \simeq \Z$. For negative $n$, we have $H_{2}(x, \phi^n) \simeq \Z$ and
$H_{1}(x, \phi^n) \simeq \Z$.

\vsp

As long as all primary points are also semi-primary, $H_*(x, \phi)$ and
$\Hti_*(x, \phi)$ coincide. The difference becomes apparent as soon as a move
circles around some genus and turns a semi-primary point primary. For primary
Floer homology, this kind of move is in fact secondary. The arising of $r$ and
$s$ turns $q$ from semi-primary to primary which is not noticed by primary Floer
homology.
Semi-primary Floer homology is sensitive to this move since it means the loss of
a generator. 

\vsp

The distinction between homoclinic points $p$ with contractible or
noncontractible loop $[x, p]_u  \cup [x, p]_s$ arise naturally in systems on the
torus or cylinder resp.\ annulus.
Hockett $\&$ Holmes \cite{hockett-holmes} study the existence and impact of such
(semi-primary) homoclinic points on the annulus. If $[x, p]_u  \cup [x, p]_s$ is
contractible they call $p$ {\em non-rotary}. If $[x, p]_u \cup [x, p]_s$ winds
$k$ times around the hole of the annulus, they call $p$ {\em $k$-rotary}.
Noncontractible, semi-primary points therefore fit as 1-rotary orbits in their
framework.

\subsection{Chaotic Floer homology}

The difference of primary Floer homology and semi-primary Floer homology is due
to their different generator sets. In this subsection, we define a version of
homoclinic Floer homology which is based on primary points as generators, but
whose boundary operator is different from the one in primary Floer homology.

\vsp

We want to include some of the nearby chaos in the definition of homoclinic
Floer homology. Before we start, recall some classical results about the
existence of periodic points near a homoclinic tangle. Birkhoff \cite{birkhoff}
proved in 1935 that there is an intricate amount of (mostly high)periodic points
near a homoclinic one which was formalized by Smale's horseshoe.
For periodic points, there is Conley's conjecture which claims the existence of
infinitely many periodic points on certain symplectic manifolds. By now, it has
been established for certain manifolds, cf.\ Ginzburg \cite{ginzburg}, Hingston
\cite{hingston}. 

\vsp

Now we will define a homoclinic Floer homology which takes also periodic points
of the underlying symplectomorphism into account.
Assume $\phi \in \Diff_\om(M)$ and $x \in \Fix(x)$ hyperbolic. Depending on the
iteration number $n \in \Z$, we assign new signs to primary points $p$, $q \in
\mcH(\phi^n, x)$ via
\beqs
\nu_n(p,q):=
\left\{
\begin{aligned}
&m(p,q) && \mbox{if }  \emptyset \neq \mcM(p,q) \ni u , \ \Fix(\phi^n) \cap
\Img(u) = \emptyset \\
& 0 && \mbox{otherwise.}
\end{aligned}
\right.
\eeqs
Set $\nu_n(\langle p \rangle , \langle q \rangle):= \sum_{l \in \Z}\nu_n(p,q^l)$
and define the chain complexes as $\mathscr C_*^{(n)}:= C_*(x,\phi^n;\Z)$. The
boundary operators are
\beqs
\mathscr D^{(n)} : \mathscr C_*^{(n)} \to \mathscr C^{(n)}_{*-1}, \qquad
\mathscr D^{(n)}(\langle p \rangle) := \sum_{\stackrel{\langle q \rangle \in
\mcHprti(\phi^n)}{\nu_n(\langle p \rangle , \langle q \rangle)=1}} \nu_n(\langle
p \rangle , \langle q \rangle) \langle q \rangle
\eeqs
on a generator and are extended to $\mathscr D^{(n)}$ by linearity. 

\begin{Theorem}
\label{thchaoticfh}
It holds $\mathscr D^{(n)} \circ \mathscr D^{(n)}=0$ and $\Hhat_*(x,\phi^n):=
\Hhat_*(x,\phi^n; \Z):={\frac{\ker \mathscr D^{(n)}_*}{\Img \mathscr
D^{(n)}_{*+1}}}$ is called {\em chaotic Floer homology}.
\end{Theorem}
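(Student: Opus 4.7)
The plan is to mirror the proof of \refdelfraksquare\ with one extra bookkeeping step tracking the periodic-point avoidance condition. I would first pass to the lifted tangle generated by $\Wti^u$ and $\Wti^s$ on $\Mti$ (identifying $p=\pti$ etc.) and fix a generator $\langle p \rangle$ of $\mathscr C^{(n)}_*$. Expanding $\mathscr D^{(n)}\circ \mathscr D^{(n)} \langle p \rangle$ in the usual way, the identity will reduce to showing that for each primary $r\in\mcHpr(\phi^n)$ with $\mu(p,r)=2$,
\beqs
\sum_{\stackrel{q\in\mcHpr(\phi^n)}{\mu(q)=\mu(p)-1}} \nu_n(p,q)\,\nu_n(q,r) = 0.
\eeqs
Well-definedness of $\mathscr D^{(n)}$ and finiteness of this sum will follow from $\nu_n(p,q)\neq 0\Rightarrow m(p,q)\neq 0$ together with \reffiniteprimiterate\ and \refprimarymax.

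The key observation will be that $\nu_n(p,q)=m(p,q)$ precisely when the unique di-gon $u\in\mcMhat(p,q)$ (cf.\ \refpropclassifind) satisfies $\Fix(\phi^n)\cap\Img(u)=\emptyset$, and vanishes otherwise. So whenever a summand $\nu_n(p,q)\,\nu_n(q,r)$ is nonzero, there must exist representatives $v\in\mcMhat(p,q)$ and $\vhat\in\mcMhat(q,r)$ whose images avoid $\Fix(\phi^n)$. Gluing via \refgluing\ then yields an immersed heart $w:=\vhat\#v\in\mcNhat(p,r)$ with $\Img(w)=\Img(v)\cup\Img(\vhat)$, and hence $\Fix(\phi^n)\cap\Img(w)=\emptyset$.

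Next I would invoke the primary cutting theorem \refprimcutth: there are unique primary points $q_u$ and $q_s$ and di-gons $v_i\in\mcMhat(p,q_i)$, $\vhat_i\in\mcMhat(q_i,r)$ with $\vhat_i\#v_i=w$ for $i\in\{s,u\}$. Since $\Img(v_i)\cup\Img(\vhat_i)=\Img(w)$, both di-gon images automatically avoid $\Fix(\phi^n)$ --- which in particular forces $q_u,q_s\notin\Fix(\phi^n)$ --- so that $\nu_n(p,q_i)=m(p,q_i)$ and $\nu_n(q_i,r)=m(q_i,r)$ for both $i$. All other summands vanish, exactly as in the proof of \refdelfraksquare, so the sum will collapse to
\beqs
m(p,q_u)\,m(q_u,r) + m(p,q_s)\,m(q_s,r),
\eeqs
which equals zero by \refsignskewsym.

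The hard part will be the symmetry just exploited: I must ensure that the condition ``$\Img$ avoids $\Fix(\phi^n)$'' is a property of the glued heart $w$ rather than of its particular decomposition as $\vhat_i\#v_i$, so that it transfers between the $u$-cut and the $s$-cut without bias. This is precisely what makes the cancellation from \refsignskewsym\ survive the projection $m\mapsto\nu_n$, and it is the step where a more restrictive boundary condition (for example, requiring $\Fix(\phi^n)$ to avoid only the open interior rather than the whole image) would need additional care at the cutting points.
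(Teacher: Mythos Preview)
Your proposal is correct and follows essentially the same strategy as the paper: both arguments reduce $\mathscr D^{(n)}\circ\mathscr D^{(n)}=0$ to an analogon of \refsignskewsym\ for the signs $\nu_n$, using that the primary gluing and cutting theorems go through unchanged and that the fixed-point-avoidance condition depends only on the image of the glued heart $w$, not on its particular decomposition into di-gons. The only difference is presentational: the paper verifies the $\nu_n$-version of the sign identity by a short case analysis on the placement of a fixed point $y\in\Fix(\phi^n)$ inside the heart (showing both products vanish whenever $y$ lies in $\Img(w)$), whereas you argue contrapositively that if one product is nonzero then $\Img(w)\cap\Fix(\phi^n)=\emptyset$, forcing the other product to equal its $m$-counterpart as well --- a slightly cleaner packaging of the same observation.
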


\begin{proof}
Compared to primary Floer homology, chaotic Floer homology also uses primary points as generators, but employs a modified boundary operator which counts `less' digons than the one of primary Floer homology. Thus the gluing procedure  and also the finiteness of the sum in the definition of the boundary operator carry directly over from primary Floer homology. 

And also the cutting procedure is still valid:
Let $n \in \N$ and consider the primary points of $\phi^n$. For $p$, $r \in \mcHpr(\phi^n)$ with $\mu(p,r)=2$, \refprimcutth\ yields the two cutting points $q_s$, $q_u \in \mcHpr(\phi^n)$ since {\em counting or not counting} a di-gon with the new sign $\nu_n(\cdot, \cdot)$ is independent from the {\em existence} of the cutting points.

Thus we only have to prove an analogon of \refsignskewsym\ for the new signs $\nu_n(\cdot, \cdot)$. Consider the possible cutting situations in Figure \ref{signchaoticfh}. If there are no fixed points of $\phi^n$ in the ranges of the involved di-gons as in Figure \ref{signchaoticfh} (a), the signs $\nu_n(\cdot, \cdot)$ coincide with the signs $m(\cdot, \cdot)$ and \refsignskewsym\ holds true. 
\begin{figure}[h]
\begin{center}

\input{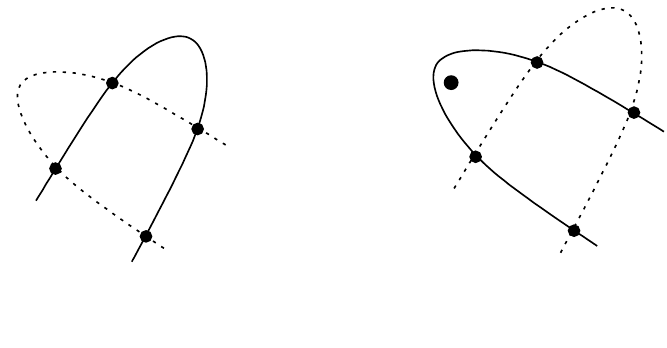_t}

\caption{Signs in chaotic Floer homology.}
\label{signchaoticfh}

\end{center}
\end{figure}
Now assume that there are fixed point(s) in the range(s). For instance, as in Figure \ref{signchaoticfh} (b), let $y \in \Fix(\phi^n)$ lie in the range of all di-gons in $\mcM(p, q_s)$, but not in the range of the di-gons in $\mcM(q_s,r)$. 
We compute
\begin{align*}
\nu_n(p, q_s) \cdot \nu_n(q_s, r) =  0 \cdot \nu_n(q_s,r) & = 0,  \\
   - \nu_n(p,q_u) \cdot \nu_n(q_u,r)    = - \nu_n(p,q_u) \cdot 0 & = 0.
\end{align*}
Other placements of (possibly several) fixed points yield similar calculations. Thus we proved an analogon of \refsignskewsym\ which yields the claim.
\end{proof}

\begin{Remark}
\begin{enumerate}[(1)]
 \item 
Chaotic primary Floer homology is invariant under conjugation. 
\item
The additional condition on the signs renders an invariance discussion for
arbitrary high $n$ futile since the fixed point condition prevents moves.
Invariance makes only sense for fixed $n$ and then one would have to require the
existence of continuations of all involved periodic points.
\end{enumerate}
\end{Remark} 

The main importance of chaotic Floer homology lies in its change under
iteration. Therefore let us consider the dynamics of $n \mapsto \Hhat_*(x,
\phi^n)$ in an explicit example. 

\vsp

\begin{figure}[h]
\begin{center}

\input{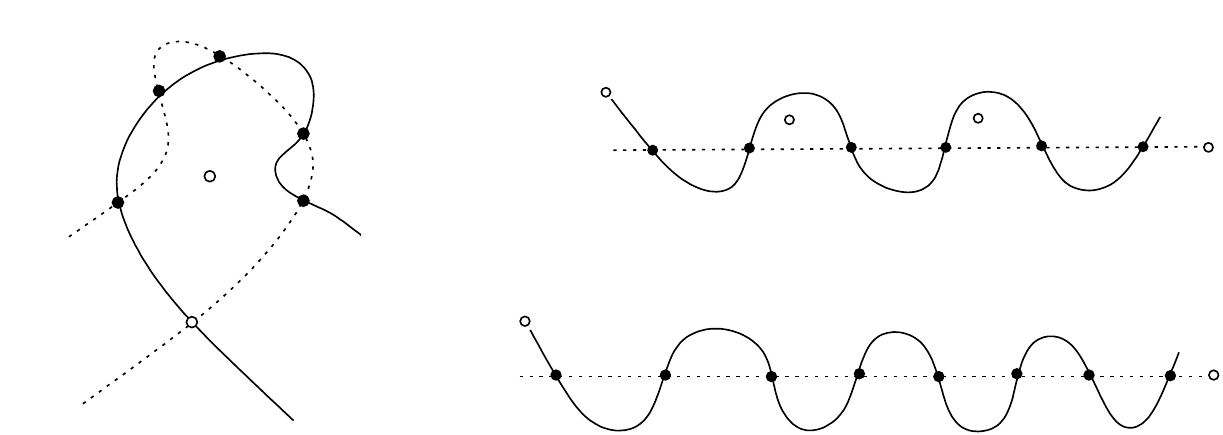_t}

\caption{Chaos near the homoclinic tangle.}

\label{chaos}

\end{center}
\end{figure}

Recall the notation $\phi^n(p)=:p^n$ with $p=p^0$.
Let $\phi \in \Diff_\om(\R^2)$ have the homoclinic tangle sketched in Figure
\ref{chaos} (i) and assume the following additional data: Let $\Fix(\phi)=\{x,
y\}$ and set $\Per_k(\phi)$ to be the set of periodic points whose smallest
period is $k$. Assume $\Per_2(\phi)=\{z^0, z^1\}$ and $\Per_3(\phi)= \emptyset$
and that only the branches containing $p$ intersect.
The homoclinic tangles of $\phi^2$ and $\phi^3$ are drawn in Figure \ref{chaos}
(ii) and (iii) where we have splitted $x$ into two copies. Assume the positions
of $x$, $y$, $z^0$ and $z^1$ as in Figure \ref{chaos}.
Now we compute the chaotic Floer homology for $n \in \{1, 2, 3\}$.

\begin{Example}
Under the above assumption, we obtain
\begin{align*}
\mbox{For } n=1: \qquad &\Hhat_{-1}(x, \phi) \simeq \Z & \mbox{and} \quad\qquad
& \Hhat_{-2}(x, \phi) \simeq \Z ,&&&\\
\mbox{For } n=2: \qquad &\Hhat_{-1}(x, \phi^2) =0 & \mbox{and} \quad\qquad &
\Hhat_{-2}(x, \phi^2)=0,&&& \\
\mbox{For } n=3: \qquad & \Hhat_{-1}(x, \phi^3) \simeq \Z & \mbox{and}
\quad\qquad & \Hhat_{-2}(x, \phi^3)\simeq \Z.&&&	
\end{align*}
\end{Example}

\begin{proof}
For $n=1$ we have $\msC^{(1)}_{-1}= \Z \langle p \rangle$ and $\mathscr
C^{(1)}_{-2}= \Z \langle q \rangle$. The boundary operator is $\msD^{(1)}
\langle p \rangle = -\langle q \rangle +\langle q \rangle =0$ and $\mathscr
D^{(1)} \langle q \rangle =0$. Thus $\Hhat_*(x, \phi) \simeq \msC_*^{(1)}$.

For $n=2$ we obtain $\msC^{(2)}_{-1} = \Z \langle p^0 \rangle \oplus \Z \langle
p^1 \rangle$ and $\msC^{(2)}_{-2} = \Z \langle q^0 \rangle \oplus \Z \langle q^1
\rangle$. The boundary operator is given by $\mcD^{(2)}\langle p^0 \rangle =
\langle q^0 \rangle$ and $\mcD^{(2)} \langle p^1 \rangle = \langle q^1 \rangle$.
This yields $\Hhat_{-1}(x, \phi^2) =0$ and $\Hhat_{-2}(x, \phi^2)=0$.

For $n=3$, there is 
\begin{align*}
& \mathscr C^{(3)}_{-1}= \Z \langle p^0 \rangle \oplus \Z \langle p^1 \rangle 
\oplus \Z \langle p^2 \rangle,
&& \mathscr D^{(3)} \langle p^0 \rangle  = \langle q^0 \rangle - \langle q^2
\rangle , && \mathscr D^{(3)} \langle q^0 \rangle =0, \\
&\mathscr C^{(3)}_{-2}= \Z \langle q^0 \rangle \oplus \Z \langle q^1 \rangle
\oplus \Z \langle q^2 \rangle,
&& \mathscr D^{(3)} \langle p^1 \rangle  = \langle q^1 \rangle - \langle q^0
\rangle , && \mathscr D^{(3)} \langle q^1 \rangle =0 ,\\
&
&& \mathscr D^{(3)} \langle p^2 \rangle  = \langle q^2 \rangle - \langle q^1
\rangle , && \mathscr D^{(3)} \langle q^2 \rangle =0
\end{align*}
and we compute $\Hhat_{-1}(x, \phi^3) \simeq \Z \simeq \Hhat_{-2}(x, \phi^3)$.
\end{proof}

Recall that we calculated the primary Floer homology for the iterates of $\phi$
in \refinfinitehom\ and obtained $H_{-1}(x, \phi^n) \simeq \Z $ and $ H_{-2}(x,
\phi^n) \simeq \Z$ for all $n \in \Z^{>0}$ which was oblivious to the iteration.

\vsp

This simple example demonstrates the properties of chaotic Floer homology very
well. For the higher iterates we know that $ z \in \Fix(\phi)$ implies $ z \in
\Fix(\phi^n)$ and $ z \in \Fix (\phi^l) \cap \Fix(\phi^k) $ implies $ z \in
\Fix(\phi^{k \cdot l})$. But apart from those `old ones', new fixed points might
or will arise according to our discussion above.

\vsp

The dynamical behaviour of $n \mapsto H_*^{\Fix}(x, \phi^n)$ leads to a
symplectic zeta function
\beqs
\ze_{x,\phi}(z):= \exp\left(\sum_{n=1}^\infty \frac{\chi(\Hhat_*(x,\phi^n))}{n}
z^n\right )
\eeqs
where $\chi(H^{\Fix}_*(x,\phi^n))$ denotes the Euler characteristic of
$\Hhat_*(x,\phi^n)$. Zeta function have been studies a lot in number theory,
algebraic geometry and dynamics. For an overview see for instance Fel'shtyn
\cite{felshtyn, felshtyn2}.

\begin{Question}
\begin{enumerate}[a)]
\item
Is there $\phi$ such that $\ze (x, \phi)$ is rational? If yes, which $\phi$?
\item
Is there a relation to the classical (symplectic) zeta function?
\item
Are there applications to Nielsen theory and Reidemeister torsion whose relation
to dynamical zeta \fct s is described in Fel'shtyn \cite{felshtyn}?
\end{enumerate}
\end{Question}

\end{document}